\newcounter{thmcounter}
\numberwithin{thmcounter}{section}
\numberwithin{equation}{section}
\newtheorem{theorem}[thmcounter]{Theorem}
\newtheorem{proposition}[thmcounter]{Proposition}
\newtheorem{lemma}[thmcounter]{Lemma}
\newtheorem{corollary}[thmcounter]{Corollary}
\theoremstyle{definition}
\newtheorem{remark}[thmcounter]{Remark}
\newtheoremstyle{claim}{9pt}{3pt}{}{\parindent}{\bf}{.}{1em}{}
\theoremstyle{claim}
\newenvironment{namelist}[1]{%
\begin{list}{}
{
\settowidth{\labelwidth}{#1}%
\setlength{\labelsep}{0.3em}%
\setlength{\leftmargin}{\labelwidth}%
\addtolength{\leftmargin}{\labelsep}}}{%
\end{list}}
\newcommand{\nZ}{\mathbf{Z}}                     
\newcommand{\nC}{\mathbf{C}}                     
\newcommand{\nP}{\mathbf{P}}                     
\newcommand{\sE}{\mathscr{E}}
\newcommand{\sO}{\mathscr{O}}                    
\DeclareMathOperator{\Cliff}{Cliff}              
\DeclareMathOperator{\coker}{coker}              
\DeclareMathOperator{\gon}{gon}                  
\DeclareMathOperator{\uIm}{Im}                   
\DeclareMathOperator{\Ker}{Ker}                  
\DeclareMathOperator{\pr}{pr} 
\DeclareMathOperator{\Supp}{Supp}                
\DeclareMathOperator{\Sym}{Sym}                  
\DeclareMathOperator{\rank}{rank}                
\newcommand*{\longhookrightarrow}{\ensuremath{\lhook\joinrel\relbar\joinrel\rightarrow}}
\newcounter{rkcounter}             
\begin{document}

\title[Effective gonality theorem]{Effective gonality theorem on weight-one syzygies of algebraic curves}

\author{Wenbo Niu}
\address{Department of Mathematical Sciences, University of Arkansas, Fayetteville, AR 72701, USA}
\email{wenboniu@uark.edu}

\author{Jinhyung Park}
\address{Department of Mathematical Sciences, KAIST, 291 Daehak-ro, Yuseong-gu, Daejeon 34141, Republic of Korea}
\email{parkjh13@kaist.ac.kr}

\date{\today}

\subjclass[2020]{14Q20, 13A10}

\keywords{gonality conjecture, syzygies of an algebraic curve, symmetric product of an algebraic curve}

\begin{abstract} 
In 1986, Green--Lazarsfeld raised the gonality conjecture asserting that the gonality $\gon(C)$ of a smooth projective curve $C$ of genus $g\geq 2$ can be read off from weight-one syzygies of a sufficiently positive line bundle $L$ on $C$, and also proposed possible least degree of such a line bundle.  In 2015, Ein--Lazarsfeld proved the conjecture when $\deg L$ is sufficiently large, but the effective part of the conjecture remained widely open and was reformulated explicitly by Farkas--Kemeny. In this paper, we establish an effective vanishing theorem for weight-one syzygies, which implies that the gonality conjecture holds if $\deg L \geq 2g+\gon(C)$ or $\deg L = 2g+\gon(C)-1$ and $C$ is not a plane curve. As Castryck observed that the gonality conjecture may not hold for a plane curve when $\deg L = 2g+\gon(C)-1$, our theorem is the best possible and thus gives a complete answer to the gonality conjecture.
\end{abstract}

\maketitle

\section{Introduction}

\noindent Throughout this paper, we work over the field $\nC$ of complex numbers. Let $C$ be a smooth projective curve of genus $g \geq 0$, and $B$ and $L$ be line bundles on $C$. Suppose that $L$ is globally generated, and write $S:=\Sym H^0(C, L)$ for the polynomial ring as the symmetric algebra of the vector space $H^0(C, L)$. The associated section module 
$$
R(C, B;L):=\bigoplus_{q\in \nZ}H^0(B\otimes L^q)
$$
is a finitely generated graded $S$-module and admits a minimal graded free resolution over $S$:
$$
\cdots\longrightarrow E_p\longrightarrow \cdots \longrightarrow E_2 \longrightarrow E_1\longrightarrow E_0\longrightarrow R(C, B;L)\longrightarrow 0.
$$
Each graded free $S$-module $E_p$ in the resolution has the form 
$$
E_p=\bigoplus_{q\in\nZ}K_{p,q}(C,B;L)\otimes_{\nC} S(-p-q),
$$
where $K_{p,q}(C,B;L)$ is the \emph{Koszul cohomology} defined as the cohomology at the middle of the Koszul-type complex
$$
\wedge^{p+1}H^0(L)\otimes H^0(B\otimes L^{q-1})\longrightarrow\wedge^pH^0(L)\otimes H^0(B\otimes L^q)\longrightarrow \wedge^{p-1}H^0(L)\otimes H^0(B\otimes L^{q+1}).
$$
We may regard $K_{p,q}(C, B; L)$ as the space of \emph{$p$-th syzygies of weight $q$}. For more details on Koszul cohomology, we refer the reader to the pioneering work \cite{Green:KoszulI, Green:Koszul2} of Green.

\medskip

One of  the most interesting cases is when $B=\sO_C$, in which case we simply write the associated section module as $R(C; L)$ and its Koszul cohomology groups as $K_{p,q}(C;L)$. It is elementary to see that $K_{i,0}(C; L) = 0$ if and only if $i \geq 1$, and if $L$ is nonspecial, i.e., $H^1(C, L)=0$, then $K_{i,j}(C; L) = 0$ for $j \geq 3$. Thus there are basically only two nontrivial weights for syzygies: $K_{i,1}(C; L)$ (weight-one) and $K_{i,2}(C; L)$ (weight-two). There has been a great deal of interests in the past decades to understand syzygies of algebraic curves with considerable effort on weight one and two especially. It turns out that they often reveal surprising connection between the intrinsic/extrinsic geometry and algebraic properties of the curves. Among others, generalizing classical results of Castelnuovo, Mattuck, Mumford, Saint-Donat, Fujita on equations defining algebraic curves, Green \cite[Theorem 4.a.1]{Green:KoszulI} proved his celebrated $(2g+1+p)$-theorem: \emph{if $\deg L \geq 2g+1+p$, then $K_{i,2}(C; L)=0$ for $0 \leq i \leq p$.} Green--Lazarsfeld  \cite{Lazarsfeld:SomeResults} classified the cases of $K_{p+1,2}(C;L) \neq 0$ when $\deg L = 2g+1+p$.
Geometrically, as $L$ is very ample, it gives an embedding
$$
C \subseteq \nP H^0(C, L) = \nP^r,~~\text{ where }~~r=r(L):=h^0(C, L)-1,
$$
and Green's theorem in particular means that $C \subseteq \nP^r$ is projectively normal when $\deg L \geq 2g+1$ and the defining ideal $I_{C|\nP^r}$ is generated by quadrics when $\deg L \geq 2g+2$. 
\medskip

It is natural to further ask what one can expect for weight-one syzygies $K_{i,1}(C; L)$ in the setting of Green's theorem. To continue our discussion leaving out the trivial cases, we assume $g \geq 2$ since for $g=0$ or $1$  the minimal graded free resolution of $R(C; L)$ can be explicitly computed. Note that $K_{0,1}(C; L) = 0$ and the dimension of  $K_{1,1}(C; L)$ is the number of minimal quadric generators of $I_{C|\nP^r}$. For the rest part in weight-one syzygies, Green--Lazarsfeld's nonvanishing theorem \cite[Appendix]{Green:KoszulI} implies that
\emph{if $\deg L \geq 2g+\gon(C)-2$, then $K_{r(L)-\gon(C),1}(C; L) \neq 0$ and hence $K_{p, 1}(C; L) \neq 0$ for $1 \leq p \leq r(L)-\gon(C)$}.
Recall that $\gon(C)$ is the \emph{gonality} of the curve $C$ defined as the minimal degree of a branched covering $C \to \nP^1$. 
The important issue here is whether weight-one syzygies actually vanish beyond the range above, and Green--Lazarsfeld's gonality conjecture  \cite[Conjecture 3.7]{Lazarsfeld:ProjNormCurve}  predicted that this is exactly the case. Precisely, in 1986, Green--Lazarsfeld  conjectured that if $\deg L \gg 0$, then 
\begin{equation}\label{eq:gonconj}
K_{p,1}(C; L) = 0~~\text{ for $p \geq  r(L)-\gon(C)+1$}.
\end{equation}
The conjecture was first verified for the case of very small value of gonality  \cite{Green:KoszulI, Teixidor} or for a general curve of each gonality \cite{Aprodu:GonConj, AV}. In 2015, Ein--Lazarsfeld \cite{Ein:Gonality} made a breakthrough by fully proving the gonality conjecture when $\deg L \gg 0$. However, the last piece of the whole puzzle, the effective gonality problem seeking an effective sharp bound for $\deg L$ to satisfy (\ref{eq:gonconj}), has remained widely open since then. Indeed, Green--Lazarsfeld \cite[page 87]{Lazarsfeld:ProjNormCurve} originally proposed that $\deg L \geq 2g+\gon(C)-1$ would suffice for (\ref{eq:gonconj}). Farkas--Kemeny  \cite[page 3]{Farkas:LinSyzGon} explicitly conjectured that this would be the case when $g \geq 4$, and verified their conjecture for general curves of each gonality. However, Castryck \cite{Castryck:LowerBoundGon} pointed out that Farkas--Kemeny's effective gonality conjecture is too optimistic: if $C \subseteq \nP H^0(C H)=\nP^2$ is a plane curve and $L:=\omega_C \otimes H$, then $\deg L = 2g+\gon(C)-1$ but $K_{r(L)-\gon(C)+1, 1}(C; L) \neq 0$. Moreover, Farkas--Kemeny noticed that if $L=\omega_C(\xi)$, where $\xi$ is any effective divisor of degree $\gon(C)$, then $\deg L = 2g+\gon(C)-2$ but $K_{r(L)-\gon(C)+1, 1}(C; L) \neq 0$. In somewhat slightly different flavor, Rathmann \cite{Rathmann:EffBd} showed that the gonality conjecture holds when $\deg L \geq 4g-3$, and noticed an example of a plane quartic curve for which the degree bound cannot be improved to $4g-4$. This is actually the only exception for $\deg L =4g-4$ when $L$ is very ample, as discussed in details in \cite{DNP}.

\medskip

In this paper, we prove the following effective gonality theorem, which completely resolves the effective gonality problem.

\begin{theorem}\label{thm:effgon} 
Let $C$ be a smooth projective curve of genus $g\geq 2$, and $L$ be a line bundle on $C$.
If $\deg L \geq 2g+\gon(C)-2$, then 
$$
K_{p,1}(C; L) \neq 0 ~~\Longleftrightarrow~~ 1 \leq p \leq r(L)-\gon(C)
$$
except for the following two cases:
\begin{enumerate}
\item [(1)] $C \subseteq  \nP H^0(C, H) = \nP^2$ is a plane curve of degree $\geq 4$, and $L=\omega_C \otimes H$. In this case, $\deg L = 2g+\gon(C)-1$.
\item [(2)] $C$ is arbitrary, and $L=\omega_C(\xi)$ for an effective divisor $\xi$ of degree $\gon(C)$ with $\dim |\xi|=1$. In this case, $\deg L = 2g+\gon(C)-2$.
\end{enumerate}	
In the exceptional cases, we have
$$
K_{p,1}(C; L) \neq 0 ~~\Longleftrightarrow~~ 1 \leq p \leq r(L)-\gon(C)+1.
$$	
\end{theorem}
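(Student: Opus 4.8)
The plan is to split the asserted equivalence into its two implications and locate all the difficulty in one of them. The implication $1 \leq p \leq r(L)-\gon(C) \Rightarrow K_{p,1}(C;L)\neq 0$ is exactly the Green--Lazarsfeld nonvanishing theorem recalled above, which holds once $\deg L \geq 2g+\gon(C)-2$; in the two exceptional cases I would produce the extra class at $p = r(L)-\gon(C)+1$ by hand from the stated geometry, using the $g^2_d$ of the plane curve in case (1) and the pencil $|\xi|$ in case (2). Everything else reduces to the complementary \emph{vanishing}
$$
K_{p,1}(C;L)=0 \qquad \text{for } p \geq r(L)-\gon(C)+1,
$$
valid away from the two exceptional configurations; this is the effective vanishing theorem announced in the abstract, and it is the whole point.

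First I would pass to a workable cohomological model. Since $\gon(C)\geq 2$ we have $\deg L \geq 2g > 2g-2$, so $L$ is nonspecial, and --- after isolating the borderline hyperelliptic subcase $\deg L = 2g$, where very ampleness and projective normality need separate care --- Green's duality gives
$$
K_{p,1}(C;L)^{\vee}\;\cong\;K_{r(L)-1-p,\,1}(C,\omega_C;L).
$$
Under this duality the range $p\geq r(L)-\gon(C)+1$ turns into the low range $a:=r(L)-1-p\leq \gon(C)-2$, so the target becomes $K_{a,1}(C,\omega_C;L)=0$ for $0\leq a\leq \gon(C)-2$. Wedging the defining sequence $0\to M_L\to H^0(L)\otimes\sO_C\to L\to 0$ of the kernel bundle and twisting by $\omega_C$ then exhibits $K_{a,1}(C,\omega_C;L)$ as the cokernel of the evaluation map
$$
\wedge^{a+1}H^0(L)\otimes H^0(\omega_C)\longrightarrow H^0\big(C,\wedge^a M_L\otimes\omega_C\otimes L\big),
$$
so that the sought statement is a \emph{surjectivity} assertion for this map and not a naive cohomology vanishing --- a distinction that, as the boundary shows, is essential.

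The heart of the matter is an effective vanishing for these twisted wedge powers. Following the secant-sheaf method of Ein--Lazarsfeld, I would transport the question to the symmetric product $C_{a+1}=\Sym^{a+1}C$, computing the relevant group as the cohomology of an explicit sheaf built from a tautological rank-$(a+1)$ bundle together with a line bundle encoding $L$ and $\omega_C$. The gonality enters precisely here: a surviving class forces a pencil of degree at most $\gon(C)-1$, contradicting the minimality in the definition of $\gon(C)$ exactly in the range $a\leq\gon(C)-2$. To replace the asymptotic vanishing of Ein--Lazarsfeld by the sharp threshold $2g+\gon(C)-2$, I would induct on $\deg L$, peeling off a general point by writing $L=L'(x)$ and comparing $M_L$ with $M_{L'}$ through the standard restriction sequence, with a base case that isolates the gonality pencil itself. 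It is this effective bookkeeping, rather than the original asymptotic argument, that should drive the bound all the way down to $2g+\gon(C)-2$.

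The hard part will be the boundary analysis that must, in one stroke, certify the sharp vanishing and pin down exactly the two exceptional cases. Under the duality these correspond to the single failure $K_{\gon(C)-2,1}(C,\omega_C;L)\neq 0$, i.e. to the evaluation map above failing to be surjective at $a=\gon(C)-2$; I would have to show that this happens if and only if $L=\omega_C\otimes H$ for a plane curve or $L=\omega_C(\xi)$ with $\dim|\xi|=1$. Controlling the relevant maps at the boundary comes down to understanding $H^0$ of line bundles of degree near $g-1$ on $C$, so Clifford's theorem together with the classification of curves attaining equality, and the special position of plane curves carrying a $g^2_d$, all have to be marshalled to guarantee that no further exceptions survive. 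Matching the resulting list precisely against the examples of Castryck and of Farkas--Kemeny, and thereby confirming that the bound $2g+\gon(C)-2$ is optimal, is the most delicate and error-prone step.
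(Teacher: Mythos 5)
Your skeleton matches the paper's architecture: dualize to $K_{a,1}(C,\omega_C;L)$ with $a=r(L)-1-p$, get the nonvanishing range from Green--Lazarsfeld's nonvanishing theorem, and isolate the exceptional cases by Clifford-index considerations. The gap sits in the one step you yourself call ``the whole point'': the effective vanishing $K_{a,1}(C,\omega_C;L)=0$ for $a\leq\gon(C)-2$ at the sharp threshold. The paper's engine here is Theorem \ref{main:03}: if $B$ is $p$-very ample and $h^1(C,L\otimes B^{-1})\leq r(B)-p-1$, then $K_{p,1}(C,B;L)=0$. Its proof is \emph{not} an induction on $\deg L$: one applies Serre duality on $C_{p+1}$ to rewrite $H^1(C_{p+1},M_{p+1,B}\otimes N_{p+1,L})$ as $H^p(C_{p+1},\wedge^{r(B)-p-1}M_{p+1,B}\otimes N_{p+1,\omega_C\otimes B\otimes L^{-1}})$, then runs a cascade of Leray spectral sequences for the two projections of $C_{p+1}\times C_n$ (Propositions \ref{prop:H^m(MotimesE)=0} and \ref{prop:dimL^ell_n=>H^n=0}) to reduce everything to the dimension bound $\dim\mathcal{L}_n^{\ell}\leq p+n-m-\ell$ for secant-type loci, which is finally verified by a dimension count along a finite addition map using only $h^0(C,\omega_C\otimes B\otimes L^{-1})\leq r(B)-p-1$ (Theorem \ref{thm:01}). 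None of this mechanism is present in your proposal.

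What you offer instead --- inducting on $\deg L$ by writing $L=L'(x)$ and comparing $M_L$ with $M_{L'}$ --- is not shown to close, and there is reason to doubt it can at this threshold: the restriction sequence for $\wedge^a M_L$ keeps the syzygy index $a$ fixed while $\deg L$ drops below $2g+\gon(C)-2$, so the inductive hypothesis is unavailable for $L'$, and this style of peeling argument is essentially what previously produced only weaker bounds (the paper does use point-peeling, but in Lemma \ref{lem:K_{l-1,1}=>K_{l,1}}, for the \emph{nonvanishing} direction, where a surjection $K_{\ell,1}(C,B;L)\to K_{\ell-1,1}(C,B(-x);L(-x))$ suffices). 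Likewise your claim that a surviving class ``forces a pencil of degree at most $\gon(C)-1$'' is asserted rather than derived; in the paper gonality never enters by extracting a pencil from a syzygy class, but only through the equivalence of $\gon(C)\geq p+2$ with $p$-very ampleness of $\omega_C$ together with the numerology of $h^1(C,L\otimes\omega_C^{-1})$. Finally, pinning down exactly the two exceptional families requires more than Clifford's theorem: one needs the equality analysis of Theorem \ref{main:04}, where the plane-curve case emerges from the Clifford dimension $\gamma(C)=2$ and the pencil case from Corollary \ref{cor:2g+p}; your plan names the right tools but does not supply this classification.
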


The theorem unifies and clarifies all the previously known results on the gonality conjecture. It includes all aforementioned exceptional examples observed by Castryk and Farkas--Kemeny, and says that the desired vanishing (\ref{eq:gonconj}) of $K_{p,1}(C;L)$ holds without any exceptional cases as soon as $\deg L \geq 2g+\gon(C)$. It is interesting to point out that Castryck's example is the only exception for Green--Lazarsfeld's original expectation and Farkas--Kemeny's effective gonality conjecture, and Farkas--Kemeny's example is the only exception for the gonality conjecture when $\deg L = 2g+\gon(C)-2$. 

\medskip

As remarked by Green--Lazarsfeld \cite[page 87]{Lazarsfeld:ProjNormCurve}, one certainly wants a bound on the degree of $L$ independent of $\gon(C)$ for the gonality conjecture. Since $g \geq \gon(C)$, Theorem \ref{thm:effgon} can be applied to the case when $\deg L \geq 3g-2$.

\begin{corollary}\label{main:02} 
Let $C$ be a smooth projective curve of genus $g\geq 2$, and $L$ be a line bundle on $C$. If $\deg L \geq 3g-2$, then
$$
K_{p,1}(C; L) \neq 0 ~~\Longleftrightarrow~~ 1 \leq p \leq r(L)-\gon(C)
$$
except for the following two cases:
\begin{enumerate}
\item [(1)] $C \subseteq  \nP H^0(C, \omega_C) = \nP^2$ is a plane quartic curve and $L = \omega_C^2$ or $L=\omega_C^2(-x)$ for a point $x \in C$. In this case, $g=3$, and $\deg L = 3g-1=8$ or $\deg L = 3g-2=7$.
\item [(2)] $C$ is a curve of genus $g=2$ and $L=\omega_C^2$. In this case, $\deg L = 3g-2 = 4$.
\end{enumerate}
\end{corollary}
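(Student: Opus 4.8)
The plan is to derive Corollary \ref{main:02} directly from Theorem \ref{thm:effgon} by a short finite case analysis, since under the hypothesis $\deg L \geq 3g-2$ the theorem already applies and the asserted conclusion is precisely its generic conclusion. First I would recall the standard gonality bound $\gon(C) \leq \lfloor (g+3)/2 \rfloor$, which in particular yields $\gon(C) \leq g$ for every $g \geq 2$ (as already noted in the text preceding the corollary); hence $\deg L \geq 3g-2 = 2g+(g-2) \geq 2g + \gon(C) - 2$, so Theorem \ref{thm:effgon} is in force. It then remains only to determine for which $(C,L)$ the two exceptional families of Theorem \ref{thm:effgon} are compatible with the sharper bound $\deg L \geq 3g-2$, and to check that these coincide with the two families in the corollary.

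For exceptional case (1) of Theorem \ref{thm:effgon}, the curve $C \subseteq \nP^2$ is a smooth plane curve of degree $d \geq 4$ with $\deg L = 2g + \gon(C) - 1$, so the bound $\deg L \geq 3g-2$ forces $\gon(C) \geq g-1$. Inserting the classical plane-curve data $\gon(C) = d-1$ and $g = \binom{d-1}{2}$, this inequality reduces to $d^2 - 5d + 2 \leq 0$, whose only integer solution with $d \geq 4$ is $d = 4$. Thus $C$ is a plane quartic with $g = 3$, and since $\omega_C = \sO_C(1) = H$ in this case, we get $L = \omega_C \otimes H = \omega_C^2$ with $\deg L = 8 = 3g-1$.

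For exceptional case (2), we have $L = \omega_C(\xi)$ with $\xi$ an effective gonality divisor and $\deg L = 2g + \gon(C) - 2$, so the bound forces $\gon(C) \geq g$, and combined with $\gon(C) \leq g$ this gives $\gon(C) = g$; the bound $\gon(C) \leq \lfloor (g+3)/2 \rfloor$ then forces $g \leq 3$. When $g = 2$ the curve is hyperelliptic, $\xi \in |\omega_C|$ is its $g^1_2$, and $L = \omega_C(\xi) = \omega_C^2$ with $\deg L = 4 = 3g-2$. When $g = 3$ we must have $\gon(C) = 3$, i.e. $C$ is a non-hyperelliptic plane quartic; its gonality pencil is cut by projection from a point, so by geometric Riemann--Roch $\sO_C(\xi) = \omega_C(-x)$ for some $x \in C$, giving $L = \omega_C^2(-x)$ with $\deg L = 7 = 3g-2$.

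Assembling the two analyses, the plane-quartic contributions $L = \omega_C^2$ (degree $8$) and $L = \omega_C^2(-x)$ (degree $7$) reproduce case (1) of the corollary, while the genus-two contribution $L = \omega_C^2$ (degree $4$) gives case (2); outside these the generic conclusion of Theorem \ref{thm:effgon} applies verbatim. I expect no serious obstacle here, as the argument is essentially bookkeeping with no new geometric input beyond Theorem \ref{thm:effgon} and the standard gonality facts. The only point requiring genuine care is the precise identification of the bundles $L$ — in particular that $\omega_C = H$ for a plane quartic and that every $g^1_3$ on it is $|\omega_C(-x)|$ — so that the two exceptional families of the theorem align exactly with the two families stated in the corollary rather than overcounting or omitting a bundle.
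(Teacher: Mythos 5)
Your proposal is correct and follows essentially the same route as the paper: apply Theorem \ref{thm:effgon} (valid since $\gon(C)\leq\lfloor (g+3)/2\rfloor\leq g$) and then sift its two exceptional families against the bound $\deg L\geq 3g-2$ using $\gon(C)=d-1$, $g=\binom{d-1}{2}$ for plane curves and the identifications $H=\omega_C$, $\sO_C(\xi)=\omega_C(-x)$ for quartics. The only cosmetic difference is that the paper factors the argument through the intermediate Corollary \ref{cor:2g+(g-2)/2} (the bound $\deg L\geq 2g+\lfloor(g-1)/2\rfloor$), whereas you perform the composition in one step; the computations are the same.
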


One can compute the whole Betti tables of the exceptional cases in Corollary \ref{main:02}. Notice that there is no exception when $\deg L \geq 3g$. Thus one can read off $\gon(C)$ from the minimal graded free resolution of $R(C;L)$ for one line bundle $L$ of degree $3g$. This was also expected by Green--Lazarsfeld \cite[page 87]{Lazarsfeld:ProjNormCurve}. On the other hand, Green's $(2g+1+p)$-theorem is sharp: Green--Lazarsfeld \cite[Theorem 1.2]{Lazarsfeld:SomeResults} showed that $K_{p+1,2}(C; L) \neq 0$ when $\deg L = 2g+1+p$ and $H^0(C, L \otimes \omega_C^{-1}) \neq 0$. In other words,
$$
K_{p,2}(C; L) \neq 0~~\Longleftrightarrow~~r(L)-g \leq p \leq r(L)-1.
$$
The required conditions automatically hold as soon as $\deg L \geq 3g-2$. Thus our result completes determining the vanishing and nonvanishing of all Koszul cohomology $K_{p,q}(C; L)$ and thereby the overall shape of the minimal graded free resolution of $R(C; L)$ when $\deg L \geq 3g-2$. 

\medskip

Theorem \ref{thm:effgon} follows from a more general effective vanishing theorem for weight-one syzygies of the module $R(C,B;L)$. Indeed, by the duality theorem (cf. \cite[Theorem 2.c.1]{Green:KoszulI}), we have
$$
K_{i,1}(C; L) = K_{r(L)-i-1, 1}(C, \omega_C; L)^{\vee}.
$$
It is well-known that $\gon(C)-2 \geq p$ if and only if $\omega_C$ is $p$-very ample. Thus the gonality conjecture can be restated as that  \emph{if $\omega_C$ is $p$-very ample, then $K_{p,1}(C, \omega_C; L)=0$}.
Recall that a line bundle $B$ on $C$ is said to be {\em $p$-very ample} if the restriction map
$$
H^0(C, B)\longrightarrow H^0(\xi, B|_{\xi})
$$ 
 is surjective for every effective divisor $\xi$ of degree $p+1$ (in other words, $\xi$ imposes independent conditions on the global sections of $B$). From this perspective, to prove the gonality conjecture, Ein--Lazarsfeld \cite[Theorem B]{Ein:Gonality} actually established the vanishing $K_{p,1}(C,B;L)=0$ when $B$ is $p$-very ample and $\deg L$ is sufficiently large. Soon after, Rathmann \cite[Theorem 1.2] {Rathmann:EffBd} further showed that this vanishing holds if $h^1(C, L) = h^1(C, L \otimes B^{-1})=0$.  In this paper, we prove the following effective vanishing theorem for weight-one syzygies, which significantly improves those vanishing results and from which  the effective gonality theorem follows as a consequence. 

\begin{theorem}\label{main:03} 
Let $C$ be a smooth projective curve of genus $g\geq 0$, and $B$ and $L$ be line bundles on $C$. Assume that $B$ is $p$-very ample. If
\begin{equation}\label{eq:h^1(L-B)intro}
h^1(C,L\otimes B^{-1})\leq r(B)-p-1,
\end{equation}
then $L$ is $p$-very ample and $K_{p,1}(C,B;L)=0$.
\end{theorem}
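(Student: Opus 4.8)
The plan is to transfer the computation to the $(p+1)$st symmetric product of $C$ and to feed the $p$-very ampleness of $B$ into the secant-bundle method of Ein--Lazarsfeld and Rathmann. Set $d=p+1$ and $X=C_d$, let $\mathcal{Z}\subseteq C\times X$ be the universal divisor with projections $q_1,q_2$, and for a line bundle $A$ on $C$ form the rank-$d$ tautological bundle $E_{d,A}:=q_{2*}(q_1^*A\otimes\sO_{\mathcal{Z}})$, whose fiber over $\xi$ is $H^0(\xi,A|_\xi)$, together with the determinant line bundle $N_{d,L}:=\det E_{d,L}$. The hypothesis that $B$ is $p$-very ample is precisely the surjectivity of the evaluation $H^0(C,B)\otimes\sO_X\to E_{d,B}$, which yields the basic exact sequence
\begin{equation}\label{eq:plankernel}
0\longrightarrow M_{d,B}\longrightarrow H^0(C,B)\otimes\sO_X\longrightarrow E_{d,B}\longrightarrow 0,\qquad \rank M_{d,B}=r(B)-p .
\end{equation}
I would first record the Ein--Lazarsfeld realization identifying $K_{p,1}(C,B;L)$ with the obstruction, on $X$, to lifting global sections of $E_{d,B}\otimes N_{d,L}$ along \eqref{eq:plankernel}; concretely I expect an embedding $K_{p,1}(C,B;L)\hookrightarrow H^1(X,\,M_{d,B}\otimes N_{d,L})$, with the discrepancy between the two groups controlled by $H^1(C,B)$.

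Before exploiting this I would establish that $L$ is $p$-very ample, which is needed to compare with the Koszul complex and which already forces the sharp constant. Failure of $p$-very ampleness means $H^0(C,L)\to H^0(L|_\xi)$ is non-surjective for some effective $\xi$ of degree $p+1$, equivalently that there is a section $t\in H^0(\omega_C\otimes L^{-1}(\xi))$ not lying in $H^0(\omega_C\otimes L^{-1})$, with polar part $\xi'\le\xi$ of some length $\ell$, $1\le\ell\le p+1$. Since $B$ is $p$-very ample, the restriction $H^0(C,B)\to H^0(B|_{\xi'})$ is surjective, so its kernel $H^0(C,B(-\xi'))$ has dimension $r(B)+1-\ell$; for $s$ in this kernel the product $s\cdot t$ is regular, and $s\mapsto s\cdot t$ injects an $(r(B)+1-\ell)$-dimensional space into $H^0(C,\omega_C\otimes L^{-1}\otimes B)=H^1(C,L\otimes B^{-1})^{\vee}$. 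Hence $h^1(L\otimes B^{-1})\ge r(B)+1-\ell\ge r(B)-p$, contradicting \eqref{eq:h^1(L-B)intro}. Thus $L$ is $p$-very ample, in particular globally generated, so the curve-side kernel bundle $M_L$ exists and furnishes the alternative description $K_{p,1}(C,B;L)=\ker\big(H^1(C,\wedge^{p+1}M_L\otimes B)\to\wedge^{p+1}H^0(C,L)\otimes H^1(C,B)\big)$, which is the curve-side shadow of the group in \eqref{eq:plankernel} and makes the intractable wedge power $\wedge^{p+1}M_L$ accessible through $X$.

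The core is then to kill the obstruction inside $H^1(X,M_{d,B}\otimes N_{d,L})$. Twisting \eqref{eq:plankernel} by $N_{d,L}$ and passing to cohomology reduces the vanishing of $K_{p,1}(C,B;L)$ to the surjectivity of the multiplication map $H^0(C,B)\otimes H^0(X,N_{d,L})\to H^0(X,E_{d,B}\otimes N_{d,L})$, up to a correction coming from $H^{\geq 1}(X,N_{d,L})$ and $H^1(C,B)$. The plan is to show that any class surviving in this obstruction injects, via the same multiplication-by-$H^0(B)$ mechanism used for $p$-very ampleness and the $p$-very ampleness of $B$, into $H^1(C,L\otimes B^{-1})$, and that a nonzero such class must occupy at least $\rank M_{d,B}=r(B)-p$ dimensions there; the strict inequality $h^1(L\otimes B^{-1})\le r(B)-p-1<\rank M_{d,B}$ then forces the obstruction, hence $K_{p,1}(C,B;L)$, to vanish.

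I expect the main obstacle to be exactly this last, sharp cohomological bookkeeping on $C_{p+1}$: computing $H^{\geq 1}(X,N_{d,L})$ and the cokernel of the multiplication map with the precise constant, and verifying that no dimension is lost once the correction terms fed by $H^1(C,B)$ are accounted for. I would organize this as an induction on $p$ through the ``add-a-point'' correspondence $\sigma\colon C\times C_{p}\to C_{p+1}$, which relates $E_{d,A}$ to $E_{d-1,A}$ and lets me propagate the bound on $h^1(L\otimes B^{-1})$ from one symmetric product to the next. Matching the extremal value $r(B)-p-1$, so that the result is sharp against the plane-curve and $L=\omega_C(\xi)$ examples, is the delicate point where the estimate must be carried out without any slack.
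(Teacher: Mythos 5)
Your setup (symmetric product $C_{p+1}$, tautological bundle $E_{p+1,B}$, kernel bundle $M_{p+1,B}$, and the reduction of $K_{p,1}(C,B;L)=0$ to $H^1(C_{p+1},M_{p+1,B}\otimes N_{p+1,L})=0$) is exactly the paper's framework. Your argument that $L$ is $p$-very ample is correct and complete: producing $t\in H^0(\omega_C\otimes L^{-1}(\xi))$ with polar part $\xi'$ of length $\ell\leq p+1$, using $p$-very ampleness of $B$ to get $h^0(B(-\xi'))=r(B)+1-\ell\geq r(B)-p$, and multiplying by $t$ to contradict $h^1(L\otimes B^{-1})\leq r(B)-p-1$ is a clean route, in fact somewhat slicker than the paper's (which writes $B=\omega_C(-\alpha)$, $L=\omega_C(\xi-\beta)$ and compares degrees).

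The core of the theorem, however, is not proved. Your plan for killing $H^1(C_{p+1},M_{p+1,B}\otimes N_{p+1,L})$ is that ``any class surviving in this obstruction injects \ldots into $H^1(C,L\otimes B^{-1})$, and a nonzero such class must occupy at least $\rank M_{p+1,B}=r(B)-p$ dimensions there.'' No such map is constructed, and no reason is given why a single nonzero obstruction class would force $r(B)-p$ independent sections of $\omega_C\otimes B\otimes L^{-1}$; as stated this is a restatement of the desired conclusion rather than an argument. The paper's actual mechanism is quite different and is where all the work lies: by Serre duality the target group is $H^p\bigl(C_{p+1},\wedge^{r(B)-p-1}M_{p+1,B}\otimes N_{p+1,\omega_C\otimes B\otimes L^{-1}}\bigr)$, and the hypothesis enters precisely as the bound $h^0(\omega_C\otimes B\otimes L^{-1})=h^1(L\otimes B^{-1})\leq r(B)-p-1$ on the twisting line bundle, not on $M_{p+1,B}$ itself. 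This $H^p$ is then killed by a two-stage Leray spectral sequence argument: first the exterior powers $\wedge^{n-i}M_{p+1,B}$ are realized as higher direct images of the line bundle $(\sO_{C_{p+1}}\boxtimes N_{n,B})(-D_{p+1,n})$ on $C_{p+1}\times C_n$ (Lemma \ref{lem:R^ipr_1} and Proposition \ref{prop:H^m(MotimesE)=0}), and then the resulting line-bundle cohomologies are controlled via the other projection by a dimension estimate $\dim\mathcal{L}^\ell_n(L)\leq p+n-m-\ell$ on secant-type loci, which is ultimately a count of fibers of the addition map against $\dim|L|$ (Proposition \ref{prop:dimL^ell_n=>H^n=0} and Theorem \ref{thm:01}). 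Your proposed ``induction on $p$ via the add-a-point correspondence'' does not appear to substitute for this: the addition map is used in the paper only to propagate the dimension bounds on $\mathcal{L}^\ell_n$, and you give no base case, no inductive step, and no indication of how the extremal constant $r(B)-p-1$ survives the induction. You correctly identify this as the delicate point, but identifying it is not the same as closing it.
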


Our approach to prove the theorem is to conduct computation of the Koszul cohomology $K_{p,1}(C, B; L)$ on the symmetric product $C_{p+1}$ of the curve $C$. This idea was introduced by Voisin \cite{Voisin:GreenEven,Voisin:GreenOdd} and then extensively exploited by Ein--Lazarsfeld \cite{Ein:Gonality}, Rathmann \cite{Rathmann:EffBd}, and many others in the study of syzygies of algebraic curves \cite{Agostini:SecnatConj, DNP} and their secant varieties \cite{CKP, ENP}. Specifically, there is a tautological bundle $E_{p+1,B}$ on $C_{p+1}$ whose fiber over an effective divisor $\xi \in C_{p+1}$ of degree $p+1$ is $H^0(\xi, B|_{\xi})$ and whose global sections $H^0(C_{p+1}, E_{p+1, B}) = H^0(C, B)$. The $p$-very ampleness of $B$ gives rise to a surjective evaluation map 
$$
\operatorname{ev}_{p+1, B} \colon H^0(C, B) \otimes \sO_{C_{p+1}} \longrightarrow E_{p+1, B}.
$$
We can define the kernel bundle $M_{p+1,B}$ as the kernel of $\operatorname{ev}_{p+1, B}$. On the other hand, there is a line bundle $N_{p+1,L}=\det E_{p+1,L}$ on $C_{p+1}$ associated to $L$ with $H^0(C_{p+1}, N_{p+1, L})=\wedge^{p+1} H^0(C, L)$. The key observation made by Voisin (see e.g., \cite[Lemma 1.1]{Ein:Gonality}) is that $K_{p,1}(C,B;L)=0$ if and only if the multiplication map 
\begin{equation}\label{eq:multmapintro}
H^0(C, B) \otimes H^0(C_{p+1}, N_{p+1,L}) \longrightarrow H^0(C_{p+1}, E_{p+1,B} \otimes N_{p+1,L})
\end{equation}
is surjective. The surjectivity of the multiplication map follows from the cohomology vanishing
\begin{equation}\label{eq:H^1(MotimesN)}
H^1(C_{p+1}, M_{p+1,B} \otimes N_{p+1,L})=0.
\end{equation}
If $L$ is sufficiently positive as in the asymptotic situation treated by Ein--Lazarsfeld \cite{Ein:Gonality}, then $N_{p+1, L}$ is also sufficiently positive so that Fujita--Serre vanishing theorem immediately yields the desired cohomology vanishing (\ref{eq:H^1(MotimesN)}). However, making the vanishing effectively becomes much more subtle, and one really needs to develop a more sophisticated technique. 

\medskip

Our argument to prove Theorem \ref{main:03} is summarized as follows. 
First, we realize the dual to the above cohomology (\ref{eq:H^1(MotimesN)}) as a term 
$$
E_2^{p,0}=H^p(C_{p+1}, \wedge^{r(B)-p-1} M_{p+1, B} \otimes N_{p+1, \omega_C \otimes B \otimes L^{-1}})
$$
of the Leray spectral sequence for the projection $\pr_2 \colon C_{r(B)-p-1} \times C_{p+1} \to C_{p+1}$ associated to a line bundle $(N_{r(B)-p-1, B} \boxtimes N_{p+1,  \omega_C \otimes B \otimes L^{-1}})(-D_{r(B)-p-1, p+1})$, where
$$
D_{r(B)-p-1, p+1}:=\{(\xi, \eta) \in C_{r(B)-p-1} \times C_{p+1} \mid \Supp(\xi) \cap \Supp(\eta) \neq \emptyset\}
$$
is an effective divisor on $C_{r(B)-p-1} \times C_{p+1}$. If $E_2^{p,0}=E_{\infty}^{p,0}$ and $E^p=0$, then $E_2^{p,0}=0$. The required conditions can be deduced from the cohomology vanishing of line bundles
$$
H^i(C_j \times C_{p+1}, (N_{j, B} \boxtimes N_{p+1,  \omega_C \otimes B \otimes L^{-1}})(-D_{j, p+1}))=0
$$
for $0 \leq j \leq r(B)-p-1$ and $0 \leq i \leq p$. Next, we realize the dual to this cohomology as the converging term 
$$
E^{j+p+1-i} = H^{j+p+1-i}(C_j \times C_{p+1}, (S_{j, \omega_C \otimes B^{-1}} \boxtimes S_{p+1, L \otimes B^{-1}})(D_{j, p+1}))
$$
of the Leray spectral sequence for the projection $\pr_1 \colon C_j \times C_{p+1} \to C_j$. A pleasant geometric consequence of the condition (\ref{eq:h^1(L-B)intro}) is that
$$
\dim \underbrace{\{\xi \in C_j \mid h^1(C, (L \otimes B^{-1})(\xi)) \geq \ell \}}_{=:\mathcal{L}_j^{\ell}(\omega_C \otimes B \otimes L^{-1}) } \leq p+j-i-\ell~~\text{ for $\ell \geq 1$}.
$$
As $\Supp \big(R^\ell \pr_{1,*} (S_{j, \omega_C \otimes B^{-1}} \boxtimes S_{p+1, L \otimes B^{-1}})(D_{j, p+1}) \big) \subseteq \mathcal{L}_j^{\ell}(\omega_C \otimes B \otimes L^{-1})$, we get $E_2^{j+p+1-i-\ell, \ell}=0$ and hence $E^{j+p+1-i}=0$.

\medskip

Bearing in mind the multiplication map (\ref{eq:multmapintro}), one may think of Theorem \ref{main:03} as a far-reaching generalization of Green's $H^0$-Lemma (\cite[Theorem 4.e.1]{Green:KoszulI}), which is just $p=0$ case of our theorem. As a consequence of $H^0$-Lemma, Green \cite[Corollary 4.e.4]{Green:KoszulI} obtained that if 
$$
\deg L \geq 2g+1~~\text{ and }~~\deg B + \deg L \geq 4g+2,
$$
then $K_{0,1}(C, B; L)=0$. Based on the method of Green--Lazarsfeld \cite{Lazarsfeld:ProjNormCurve} for projective normality of algebraic curves, Butler \cite[Theorem 2]{Butler} and Pareschi \cite[Theorem 3 in Appendix]{Pareschi} significantly improve Green's result: roughly speaking, under some additional hypothesis on $B$ and $L$, if 
$$
\deg B + \deg L \geq 4g+1-2h^1(C,B) - 2h^1(C, L) - \Cliff(C),
$$
then $K_{0,1}(C, B; L)=0$. Here $\Cliff(C)$ is the \emph{Clifford index} of $C$ defined by the minimum of 
$$
\Cliff(A):=\deg A - 2h^0(C, A)+2
$$
for a line bundle $A$ on $C$ with $h^0(C, A) \geq 2$ and $h^1(C, A) \geq 2$ (if $g=2$ or $g=3$, there is no such a line bundle $A$, so we set $\Cliff(C):=\Cliff(\omega_C)=0$).
In this spirit, we show the following theorem for vanishing and nonvanishing of weight-one syzygies using Theorem \ref{main:03}.

\begin{theorem}\label{main:04}
Let $C$ be a smooth projective curve of genus $g\geq 2$, and $B$ and $L$ be line bundles on $C$. Assume that $B$ is $p$-very ample and $L$ is globally generated. If
$$
\deg L \geq 2g+p+1-h^1(C, B) ~~\text{ and }~~ \deg B + \deg L \geq 4g+2p-2h^1(C,B) - \Cliff(C),
$$
then $K_{i,1}(C, B; L) = 0$ for $0 \leq i \leq p-1$ and $K_{p,1}(C, B; L)=0$ except for the following cases: 
\begin{enumerate}
\item[(1)] $|B|$ is a base point free pencil $(p=0)$ and $h^1(C, L \otimes B^{-1}) = 1$. In this case, $K_{0,1}(C, B; L) = H^1(C, L \otimes B^{-1}) \neq 0$.
\item[(2)] $C \subseteq \nP H^0(C, B)=\nP^2$ is a plane curve of degree $\geq 4$ $(p=1)$ and $h^1(C, L \otimes B^{-1}) = 1$. In this case, $K_{0,1}(C, B; L) = 0$, and  $K_{1,1}(C, B; L) \neq 0$.
\item[(3)] $\deg L = 2g+p+h^0(C, L \otimes B^{-1})-h^1(C, B)-1$, $\deg B + \deg L = 4g+2p-2h^1(C,B) - \Cliff(C)$, and $L \otimes B^{-1}$ computes $\Cliff(C)$. If $B=\omega_C$, then $\gon(C)=p+2$ and either $C \subseteq \nP H^0(C, H)$ is a plane curve of degree $p+3 \geq 5$ and $L=\omega_C \otimes H$ $(\deg L = 2g+p+1)$ or $C$ is arbitrary with $g \geq 4$ and $L=\omega_C(\xi)$ $(\deg L = 2g+p)$ for an effective divisor $\xi$ of degree $p+2$ with $\dim |\xi|=1$. In this case, $K_{p,1}(C, B; L) \neq 0$.
\end{enumerate}
If furthermore $B$ is not $(p+1)$-very ample and $\deg L \geq 2g+p+1$, then $K_{p+1, 1}(C, B; L) \neq 0$.
\end{theorem}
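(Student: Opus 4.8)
The plan is to derive every assertion from the effective vanishing theorem, Theorem~\ref{main:03}, reducing each to a numerical inequality that I verify with Riemann--Roch and the theory of the Clifford index. First I record that a $p$-very ample line bundle is $i$-very ample for every $0 \le i \le p$: given an effective divisor of degree $i+1$, extend it to one of degree $p+1$ and use that restriction of sections to a subscheme is surjective. Hence Theorem~\ref{main:03} applies for each such $i$, giving $K_{i,1}(C,B;L)=0$ as soon as $h^1(C,L\otimes B^{-1}) \le r(B)-i-1$. Writing $A\eqqcolon L\otimes B^{-1}$ and using $r(B)=\deg B-g+h^1(C,B)$, this target is equivalent by Riemann--Roch to
\begin{equation*}
h^0(C,A)\ \le\ \deg L-2g-i+h^1(C,B)\eqqcolon m_i. \tag{$\star$}
\end{equation*}
Since $m_i$ grows as $i$ decreases, the whole difficulty concentrates at $i=p$: I will show $h^1(C,A)\le r(B)-p$ always holds, with the strict bound $h^1(C,A)\le r(B)-p-1$ (equivalently $(\star)$ at $i=p$) failing only in the cases (1)--(3). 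Because $r(B)-p\le r(B)-(p-1)-1$, the weaker bound already forces $K_{i,1}(C,B;L)=0$ for all $0\le i\le p-1$ with no exception, so only the single group $K_{p,1}(C,B;L)$ is at issue.

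Next I bound $h^0(C,A)$ by cases. If $h^0(C,A)\le 1$, the first hypothesis $\deg L\ge 2g+p+1-h^1(C,B)$ gives $m_p\ge 1\ge h^0(C,A)$, so $(\star)$ holds. If $h^0(C,A)\ge 2$ and $h^1(C,A)\ge 2$, then $A$ is eligible for the Clifford index, $h^0(C,A)=\tfrac12(\deg A+2-\Cliff(A))\le\tfrac12(\deg A+2-\Cliff(C))$, and substituting the second hypothesis $\deg B+\deg L\ge 4g+2p-2h^1(C,B)-\Cliff(C)$ yields $h^0(C,A)\le m_p+1$, with equality precisely when $A$ computes $\Cliff(C)$ and the second hypothesis is an equality; this is case~(3). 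If instead $h^0(C,A)\ge 2$ and $h^1(C,A)\le 1$, then $h^0(C,A)=\deg A+1-g+h^1(C,A)$, so $(\star)$ reads $h^1(C,A)\le r(B)-p-1$; since $r(B)\ge p+1$ for $g\ge 2$ (the alternative $r(B)=p$ would force the image to be a rational normal curve, hence $g=0$, while $B=\sO_C$ gives $K_{0,1}=0$ outright), this fails only when $r(B)=p+1$ and $h^1(C,A)=1$, and again by exactly one. In all cases the excess of $h^0(C,A)$ over $m_p$ is at most $1$, which is the promised bound $h^1(C,A)\le r(B)-p$.

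The delicate part, and the \emph{main obstacle}, is to show that the two excess-one scenarios yield exactly the configurations (1)--(3) and to establish the nonvanishing there. For the scenario $r(B)=p+1$ I would study the morphism $\phi_B\colon C\to\nP H^0(C,B)=\nP^{p+1}$: $p$-very ampleness forbids an effective divisor of degree $p+1$ from spanning less than a $\nP^p$, and intersecting with a general hyperplane together with the general position theorem shows that for $p\ge 2$ this is incompatible with $h^1(C,A)=1$ and the degree hypotheses, leaving only $p=0$, where $|B|$ is forced to be a base-point-free pencil (case~(1)), and $p=1$, where $C\subseteq\nP^2$ is a smooth plane curve (case~(2)). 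For the Clifford scenario I would invoke the standard classification of line bundles computing the Clifford index to identify, when $B=\omega_C$, the bundle $A=L\otimes\omega_C^{-1}$ as either the hyperplane class $H$ of a plane model or $\sO_C(\xi)$ with $\dim|\xi|=1$, and to read off $\gon(C)=p+2$; this is where $g\ge 4$ and the stated degree identities enter. In each exceptional case the failure of $(\star)$ by one obstructs surjectivity of the multiplication map~(\ref{eq:multmapintro}), and I would upgrade this to $K_{p,1}(C,B;L)\ne 0$ by the Green--Lazarsfeld nonvanishing theorem (\cite[Appendix]{Green:KoszulI}) applied to the pencil, the plane embedding, or the Clifford-computing bundle, recovering in particular the examples of Castryck and of Farkas--Kemeny; in case~(1) a direct computation of the cokernel of $H^0(C,B)\otimes H^0(C,L)\to H^0(C,B\otimes L)$ identifies $K_{0,1}(C,B;L)$ with $H^1(C,L\otimes B^{-1})$.

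For the final assertion, assume $B$ is not $(p+1)$-very ample, so some effective divisor $\xi$ of degree $p+2$ fails to impose independent conditions on $|B|$, i.e.\ $h^0(C,B(-\xi))>h^0(C,B)-(p+2)$. The plan is to feed this special divisor, together with the positivity supplied by $\deg L\ge 2g+p+1$ (which makes $L$ very ample and $(p+1)$-very ample and keeps $L\otimes B^{-1}$ suitably nonspecial), into the Green--Lazarsfeld nonvanishing construction, producing an explicit weight-one Koszul cocycle in $K_{p+1,1}(C,B;L)$ that is not a coboundary. I expect the only real work here to be bookkeeping: verifying that $\deg L\ge 2g+p+1$ is exactly the positivity needed for the class to survive, in parallel with the nonvanishing half of Theorem~\ref{thm:effgon}.
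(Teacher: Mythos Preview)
Your vanishing argument is essentially the paper's: reduce via Theorem~\ref{main:03} to the numerical inequality $h^1(C,L\otimes B^{-1})\le r(B)-p-1$, and show by a Clifford-index calculation that the hypotheses force $h^1(C,L\otimes B^{-1})\le r(B)-p$ in every case, with equality pinning down the exceptional configurations. That part is fine, and your observation that the weak bound already kills $K_{i,1}$ for $i\le p-1$ is exactly how the paper argues.

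The genuine gap is in the nonvanishing assertions. The Green--Lazarsfeld nonvanishing theorem you invoke (Theorem~\ref{thm:GLnonvan}, \cite[Appendix]{Green:KoszulI}) produces nonzero classes only in $K_{p,1}(C;L)$, i.e.\ for the untwisted module $B=\sO_C$. It says nothing about $K_{p,1}(C,B;L)$ for a general $B$, and there is no duality available unless $B=\omega_C$. Concretely: in case~(2) you need $K_{1,1}(C,B;L)\ne 0$ where $B$ is the plane hyperplane class, not $\sO_C$ and not $\omega_C$ in general; the paper obtains this not from Green--Lazarsfeld but from the observation that when $r(B)=p+1$ the kernel bundle $M_{p+1,B}$ has rank one, so $M_{p+1,B}\otimes N_{p+1,L}=S_{p+1,L\otimes B^{-1}}$ and $H^1$ is computed explicitly by Lemma~\ref{lem:H^i(N)H^i(S)} (this is Corollary~\ref{cor:h^1(C,L-B)<=1}). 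Your proposal does not contain this idea.

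The same problem, in sharper form, afflicts the final assertion $K_{p+1,1}(C,B;L)\ne 0$ when $B$ is not $(p+1)$-very ample. Here there is no rank-one shortcut and no duality, and your plan to ``feed $\xi$ into the Green--Lazarsfeld construction'' does not produce a class in the right Koszul group. The paper instead builds an inductive reduction (Lemma~\ref{lem:K_{l-1,1}=>K_{l,1}} and Corollary~\ref{cor:K_{0,1}=>K_{l,1}}): one constructs, for each point $x_i$ in the bad divisor $\xi$, a surjection $K_{\ell,1}(C,B;L)\twoheadrightarrow K_{\ell-1,1}(C,B(-x_i);L(-x_i))$, valid under auxiliary cohomology vanishings that the base-point-free pencil trick (Corollary~\ref{cor:bpftrick}) supplies when $\deg L\ge 2g+p+1$. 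Iterating $p+1$ times reduces to showing $K_{0,1}(C,B(-\xi);L(-\xi))\ne 0$, which follows because $B(-\xi)$ has a base point (Lemma~\ref{lem:K_{0,1}I}). This mechanism --- descending along the divisor $\xi$ one point at a time --- is the missing idea; without it you cannot touch $K_{p+1,1}(C,B;L)$ for $B\notin\{\sO_C,\omega_C\}$. (For case~(3) the paper only claims nonvanishing when $B=\omega_C$, where duality plus Green--Lazarsfeld does work, so that part of your sketch is salvageable.) The classification of the $r(B)=p+1$ situation also deserves more than a wave at the general position theorem; the paper relies on \cite[Proposition 3.4]{DNP}, whose proof is not the argument you outline.
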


This theorem almost contains the effective gonality theorem (Theorem \ref{thm:effgon}) as a special case when $B=\omega_C$ and fully recovers Green's $(2g+1+p)$-theorem when $B=L$ as $K_{p,2}(C;L)=K_{p,1}(C, L;L)$. Thus this somewhat technical-looking theorem provides a unified and strengthened statement on syzygies of algebraic curves. Moreover, a conjectural variant -- \emph{e.g., if $B$ is $p$-very ample and $L$ is $(p+1)$-very ample with $\deg L \geq 2g+p+1-2h^1(C, B) - \Cliff(C)$, and
$$
 \deg B + \deg L \geq 4g+2p+2-2h^1(C,B) - 2h^1(C, L) - 2\Cliff(C),
$$
then $K_{p,1}(C, B; L)=0$ except for a few cases} -- would solve  Green's conjecture \cite[Conjecture 5.1]{Green:KoszulI} and the Green--Lazarsfeld secant conjecture \cite[Conjecture 3.4]{Lazarsfeld:ProjNormCurve}.

\medskip

This paper is organized as follows. We begin in Section \ref{sec:prelim} with collecting relevant facts on symmetric products of algebraic curves. Section \ref{sec:vanishing} is devoted to the proof of our effective vanishing theorem for weight-one syzygies (Theorem \ref{main:03}). After discussing effective nonvanishing results for weight-one syzygies  in Section \ref{sec:nonvanishing}, we finally establish Theorem \ref{main:04} and the effective gonality theorem (Theorem \ref{thm:effgon}) in Section \ref{sec:effgonthm}.

\medskip

\noindent {\bf Acknowledgments:} We are grateful to Daniele Agostini, Lawrence Ein, and Sijong Kwak for valuable discussion and encouragement. We are especially indebted to Juergen Rathmann for sharing with us his preprints and  insights.

\section{Preliminaries on symmetric products of algebraic curves}\label{sec:prelim}

\noindent  In this section, we prepare the necessary techniques relating Koszul cohomology to vector bundles on symmetric products of algebraic curves. For more detailed discussion along this direction, we refer the reader to \cite{Ein:Gonality} and \cite{ENP}. 

\medskip

Let $C$ is a smooth projective curve of genus $g\geq 0$. For an integer $m \geq 0$, denote by $C_m$ the $m$-th symmetric product of $C$, which parameterizes effective divisors of degree $m$ on $C$. It is well-known that $C_m$ is a smooth projective variety of dimension $m$. Now, for integers $m,n \geq 1$, there is an addition map
$$
\sigma_{m,n} \colon C_{m}\times C_n \longrightarrow C_{m+n},~~(\xi,\xi') \longmapsto \xi+\xi',
$$
which is a finite morphism.
The incidence subvariety $D_{m,n}$ in the product $C_{m}\times C_n$ is an effective divisor defined to be the image of the  map
$$
j \colon C_{m-1}\times C_{n-1}\times C  \longrightarrow  C_{m}\times C_n, ~~(\xi, \xi', x) \longmapsto  (\xi+x,\xi'+x).
$$
As a set, $D_{m,n} = \{(\xi, \eta) \in C_{m} \times C_{n} \mid \Supp(\xi) \cap \Supp(\eta) \neq \emptyset\}$. 

\medskip

Consider the special case when $n=1$. In this case, $C_{m-1} \times C = D_{m,1}\subseteq C_{m}\times C$ is the universal family over $C_{m}$, as indicated in the following diagram 
$$
\xymatrix{
	D_{m,1}=C_{m-1} \times C \ \ar[dr]_{\sigma_{m-1,1}} \ar@{^{(}->}[r]^-j &C_{m}\times C\ar[d]^{\pr_{1}}\\
	&C_{m}
	}
$$
in which $\pr_1$ is the projection map. Given a line bundle $L$ on $C$, the {\em tautological bundle} associated to $L$ is a vector bundle of rank $m$ on $C_{m}$ defined by
$$
E_{m,L}:=\sigma_{m-1,1,*} (\sO_{C_{m-1}} \boxtimes L).
$$
Note that $H^0(C, E_{m,L})=H^0(C, L)$.
Next, we define a line bundle 
$$
N_{m,L}:=\det (E_{m,L}).
$$
There is a divisor $\delta_{m}$ on $C_{m}$ such that $\sO_{C_{m}}(-\delta_{m})=N_{m,\sO_C}$. Note that $\sigma_{m-1,1}^*\delta_{m} = D_{m-1,1} \subseteq C_{m-1} \times C$. Since $E_{m, \omega_C}$ is the cotangent bundle of $C_m$, we have $\omega_{C_m} = N_{m, \omega_C}$.
Now, consider a line bundle $S_{m, L}$ in $C_{m}$ which is the invariant descent of the $m$-fold box product   
$$
L^{\boxtimes m}:=\underbrace{L\boxtimes  \cdots \boxtimes L}_{\text{$m$ times}}
$$ 
from the $m$-th Cartesian product $C^{m}$ to the $m$-th symmetric product $C_{m}$ under the action of the symmetric group $\mathfrak{S}_{m}$. We have $N_{m, L} = S_{m, L}(-\delta_{m})$. The line bundles $N_{m,L}$ and $S_{m,L}$ are playing critical roles in the computation of Koszul cohomology on the symmetric products of $C$. Fortunately, the cohomology of these two line bundles are well understood as summarized in the following lemma. 

\begin{lemma}[{\cite[Lemma 2.4]{Agostini:SecnatConj}, \cite[Lemma 3.7]{ENP}}]\label{lem:H^i(N)H^i(S)}
For any $i \geq 0$, we have
$$
H^i(C_m, N_{m,L}) = \wedge^{m-i} H^0(C, L) \otimes S^i H^1(C, L)~\text{ and }~ H^i(C_m, S_{m,L}) = S^{m-i} H^0(C, L) \otimes \wedge^i H^1(C, L).
$$
\end{lemma}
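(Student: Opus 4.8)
The plan is to pass to the Cartesian product $C^m$ through the quotient map $q \colon C^m \to C_m$ by the symmetric group $\mathfrak{S}_m$ and to identify both line bundles with isotypic components of $q_*(L^{\boxtimes m})$. Working over $\nC$, the isotypic projectors are exact and $q$ is finite, so for any character $\chi$ of $\mathfrak{S}_m$ one has $H^i(C_m, (q_* L^{\boxtimes m})^{\chi}) = H^i(C^m, L^{\boxtimes m})^{\chi}$. By construction $S_{m,L}$ is the invariant descent of $L^{\boxtimes m}$, that is, $S_{m,L} = (q_* L^{\boxtimes m})^{\mathfrak{S}_m}$. For $N_{m,L}$ I would invoke the standard fact that the sign-isotypic part of $q_* \sO_{C^m}$ is cut out by the Vandermonde section vanishing along the big diagonal, which gives $(q_* \sO_{C^m})^{\sgn} = \sO_{C_m}(-\delta_m) = N_{m,\sO_C}$; combined with the equivariant projection formula $q_* L^{\boxtimes m} = S_{m,L} \otimes q_* \sO_{C^m}$ (with $\mathfrak{S}_m$ acting trivially on the descended factor $S_{m,L}$) this yields $(q_* L^{\boxtimes m})^{\sgn} = S_{m,L}(-\delta_m) = N_{m,L}$. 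Thus the two desired cohomology groups are precisely the invariant and the sign-isotypic parts of $H^i(C^m, L^{\boxtimes m})$.

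Next I would compute $H^i(C^m, L^{\boxtimes m})$ by the K\"unneth formula. Since $C$ is a curve, only $H^0(C,L)$ and $H^1(C,L)$ appear, and writing $V := H^0(C,L)$ and $W := H^1(C,L)$ one obtains $H^i(C^m, L^{\boxtimes m}) = \bigoplus_{|T| = i} \bigotimes_{k \notin T} V \otimes \bigotimes_{k \in T} W$, the sum over $i$-element subsets $T \subseteq \{1, \dots, m\}$ recording which factors contribute an $H^1$. The crucial point is that the geometric permutation action of $\mathfrak{S}_m$ on this decomposition is the graded one: transposing two classes from the odd-degree space $W$ introduces a Koszul sign. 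Equivalently, $H^*(C,L)^{\otimes m}$ carries the super permutation action on the super vector space $V \oplus \Pi W$, with $V$ even and $W$ odd.

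Finally, I would read off the two isotypic components of this super representation. Taking $\mathfrak{S}_m$-invariants of $\Sym^m_{\mathrm{super}}(V \oplus \Pi W) = \bigoplus_{a+b=m} S^a V \otimes \wedge^b W$ and selecting the cohomological degree $i$ piece (so $b = i$) gives $H^i(C_m, S_{m,L}) = S^{m-i} H^0(C,L) \otimes \wedge^i H^1(C,L)$; taking instead the sign-isotypic part interchanges the symmetric and exterior roles, producing $\wedge^{m-i} V \otimes S^i W$ in degree $i$, i.e. $H^i(C_m, N_{m,L}) = \wedge^{m-i} H^0(C,L) \otimes S^i H^1(C,L)$. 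The main obstacle is precisely this sign bookkeeping: one must genuinely verify that the $\mathfrak{S}_m$-action arising from permuting factors on coherent cohomology is the Koszul-signed one, since it is exactly this sign that swaps the symmetric and exterior powers on the $H^0$ and $H^1$ factors. A quick sanity check on an elliptic curve with $m = 2$ and $L = \sO_C$ confirms the necessity: the super action predicts $h^0 = h^1 = 1$ and $h^2 = 0$ for $\sO_{C_2}$, matching the $\nP^1$-bundle $C_2 \to \Pic^2(C)$, whereas the naive unsigned action would wrongly give $h^2 = 1$.
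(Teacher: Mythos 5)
Your proof is correct. The paper does not prove this lemma itself but cites \cite[Lemma 2.4]{Agostini:SecnatConj} and \cite[Lemma 3.7]{ENP}, and your argument is essentially the standard one given in those references: identify $S_{m,L}$ and $N_{m,L}$ with the invariant and sign-isotypic pieces of $q_*L^{\boxtimes m}$ for the quotient $q\colon C^m\to C_m$, then apply K\"unneth with the Koszul-signed permutation action --- and you correctly isolate the sign bookkeeping (with a convincing sanity check on $C_2$ for an elliptic curve) as the one point that genuinely needs verification.
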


Given a line bundle  $B$ on $C$, the fiber of $E_{p+1, B}$ over $\xi \in C_{p+1}$ is $E\otimes k(\xi)=H^0(\xi, B|_{\xi})$. Thus the restriction map $H^0(C, B)\rightarrow H^0(\xi, B|_{\xi})$ is globalized to an evaluation map 
$$
\operatorname{ev}_{p+1, B} \colon H^0(C, B)\otimes \sO_{C_{p+1}}\longrightarrow E_{p+1,B}.
$$ 
Note that $B$ is $p$-very ample if and only if $\operatorname{ev}_{p+1, B}$ is surjective if and only if $E_{p+1, B}$ is globally generated. Following Voisin \cite{Voisin:GreenEven, Voisin:GreenOdd} and Ein--Lazarsfeld \cite{Ein:Gonality}, we briefly explain how one can carry out the computation of the Koszul cohomology $K_{p,1}(C, B; L)$ on $C_{p+1}$. Formally, consider the Koszul-type complex
$$
\wedge^{p+1}H^0(C,L)\otimes H^0(C,B)\stackrel{d_{p+1}}{\longrightarrow}\wedge^pH^0(C,L)\otimes H^0(C,B\otimes L)\stackrel{d_p}{\longrightarrow} \wedge^{p-1}H^0(C,L)\otimes H^0(C,B\otimes L^{2}).
$$
The Koszul cohomology group $K_{p,1}(C,B;L)$ can be defined as the quotient $\Ker(d_{p})/\uIm(d_{p+1})$. Suppose now that $B$ is $p$-very ample so that the evaluation map $\operatorname{ev}_{p+1, B}$ is surjective. Let $M_{p+1, B}:=\ker(\operatorname{ev}_{p+1, B})$ be the kernel bundle, which fits into a short exact sequence
\begin{equation}\label{eq:sesforM_{p+1,B}}
0 \longrightarrow M_{p+1,B} \longrightarrow H^0(C, B)\otimes \sO_{C_{p+1}} \longrightarrow E_{p+1,B} \longrightarrow 0.
\end{equation}
The key observation due to Voisin (see e.g., \cite[Lemma 1.1]{Ein:Gonality}) is that the multiplication map $m_{p+1}$ in the exact sequence
$$
0\longrightarrow H^0(C_{p+1}, M_{p+1,B}\otimes N_{p+1,L}) \longrightarrow H^0(C, B)\otimes H^0(C_{p+1}, N_{p+1,L})\stackrel{m_{p+1}}{\longrightarrow} H^0(C_{p+1}, E_{p+1,B}\otimes N_{p+1,L})
$$
derived by taking cohomology of (\ref{eq:sesforM_{p+1,B}}) is exactly the Koszul differential map $d_{p+1}$ and the space $ H^0(C_{p+1}, E_{p+1,B}\otimes N_{p+1,L})$ is exactly the space $\Ker(d_p)$. 
Thus $K_{p,1}(C, B; L)$ is the cokernel of the multiplication map $m_{p+1}$, or equivalently one has  the following exact sequence 
$$0\longrightarrow K_{p,1}(C,B;L)\longrightarrow H^1(C_{p+1},  M_{p+1,B}\otimes N_{p+1,L})\longrightarrow H^0(C, B)\otimes H^1(C_{p+1}, N_{p+1,L}). $$
In particular,  if $H^1(C_{p+1}, N_{p+1, L})=0$ (e.g., this holds when $H^1(C, L)=0$ by Lemma \ref{lem:H^i(N)H^i(S)}), then 
$K_{p,1}(C, B; L) = H^1(C_{p+1}, M_{p+1, B}\otimes N_{p+1,L})$. 
In general, we obtain the following lemma, which will be used to prove our effective vanishing theorem for weight-one syzygies (Theorem \ref{main:03}). 

\begin{lemma} [{\cite[Lemma 1.1]{Ein:Gonality}}]\label{lem:H^1(MotimesN)=0=>K_{p,1}=0}
Assume that $B$ is a $p$-very ample line bundle and $L$ is a line bundle on $C$. If $H^1(C_{p+1},M_{p+1,B}\otimes N_{p+1,L})=0$, then $K_{p,1}(C,B;L)=0$.
\end{lemma}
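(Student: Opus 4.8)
The plan is to read off the vanishing of $K_{p,1}(C,B;L)$ directly from the long exact cohomology sequence attached to the kernel bundle $M_{p+1,B}$, so the argument is essentially a formal diagram chase once the dictionary between sheaf cohomology on $C_{p+1}$ and Koszul cohomology is in place. First I would invoke the hypothesis that $B$ is $p$-very ample, which guarantees that the evaluation map $\operatorname{ev}_{p+1,B}$ is surjective; hence $M_{p+1,B}=\Ker(\operatorname{ev}_{p+1,B})$ is a vector bundle fitting into the short exact sequence (\ref{eq:sesforM_{p+1,B}}). Tensoring that sequence with the line bundle $N_{p+1,L}$ (which preserves exactness) and passing to the associated long exact sequence in cohomology on $C_{p+1}$ produces
$$0 \longrightarrow H^0(M_{p+1,B}\otimes N_{p+1,L}) \longrightarrow H^0(B)\otimes H^0(N_{p+1,L}) \stackrel{m_{p+1}}{\longrightarrow} H^0(E_{p+1,B}\otimes N_{p+1,L}) \longrightarrow H^1(M_{p+1,B}\otimes N_{p+1,L}),$$
where the middle term is $H^0(C,B)\otimes H^0(C_{p+1},N_{p+1,L})$ because $H^0(C,B)\otimes \sO_{C_{p+1}}$ is a trivial bundle.

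Next I would identify the spaces and maps in this sequence with their Koszul counterparts. By Lemma \ref{lem:H^i(N)H^i(S)} one has $H^0(C_{p+1},N_{p+1,L})=\wedge^{p+1}H^0(C,L)$, so the source of $m_{p+1}$ is exactly $\wedge^{p+1}H^0(L)\otimes H^0(B)$, the domain of the Koszul differential $d_{p+1}$. The crucial input, due to Voisin (see \cite[Lemma 1.1]{Ein:Gonality}) and recorded in the discussion preceding this lemma, is that under this identification $m_{p+1}$ coincides with $d_{p+1}$ and the target $H^0(E_{p+1,B}\otimes N_{p+1,L})$ is canonically $\Ker(d_p)$. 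Granting this comparison, the exactness of the displayed sequence shows that $\Coker(m_{p+1}) = \Ker(d_p)/\uIm(d_{p+1}) = K_{p,1}(C,B;L)$ injects into $H^1(C_{p+1},M_{p+1,B}\otimes N_{p+1,L})$.

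Finally I would simply conclude: if $H^1(C_{p+1},M_{p+1,B}\otimes N_{p+1,L})=0$, then this injection forces $\Coker(m_{p+1})=0$, that is, $K_{p,1}(C,B;L)=0$. The hard part is genuinely \emph{not} in this last step, which is a one-line deduction, but in Voisin's identification of $m_{p+1}$ with $d_{p+1}$ and of the target with $\Ker(d_p)$; since that comparison is already established in the preamble, the lemma follows at once. In effect this reduces the syzygy vanishing to the purely cohomological statement $H^1(M_{p+1,B}\otimes N_{p+1,L})=0$, which is precisely the hypothesis that the later effective vanishing theorem (Theorem \ref{main:03}) is designed to verify.
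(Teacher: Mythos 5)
Your proposal is correct and follows exactly the route the paper takes (via the discussion preceding the lemma, quoting Voisin's identification from \cite[Lemma 1.1]{Ein:Gonality}): tensor the sequence (\ref{eq:sesforM_{p+1,B}}) with $N_{p+1,L}$, identify the resulting multiplication map with the Koszul differential $d_{p+1}$ and its target with $\Ker(d_p)$, and read off that $K_{p,1}(C,B;L)=\Coker(m_{p+1})$ injects into $H^1(C_{p+1},M_{p+1,B}\otimes N_{p+1,L})$. Nothing is missing.
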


The vanishing of $H^1(C_{p+1},M_{p+1,B}\otimes N_{p+1,L})$ implies the vanishing of $K_{p,1}(C,B;L)$ but the converse needs not be true.  Under suitable conditions, one may assume $H^1(C, L) = 0$ and hence $H^1(C_{p+1}, N_{p+1, L})=0$, so $H^1(C_{p+1},M_{p+1,B}\otimes N_{p+1,L})=0$ if and only if $K_{p,1}(C, B; L)=0$. However, roughly speaking, as it is hard to control the kernel bundle $M_{p+1,B}$, an effective result for $H^1(C_{p+1},M_{p+1,B}\otimes N_{p+1,L})=0$ is usually quite difficult to obtain directly. The next lemma will serve our purpose to manage the kernel bundle $M_{p+1,B}$ and overcome such difficulty. It provides a way to lift a vector bundle of the form $\wedge^{n-i} M_{p+1,B}$ on $C_{p+1}$ to a line bundle of the form $(\sO_{C_{p+1}} \boxtimes N_{n,B})(-D_{p+1,n})$ on $C_{p+1} \times C_n$, and therefore successfully enables us to transfer the vanishing of cohomology of vector bundle $\wedge^{n-i} M_{p+1,B} \otimes N_{p+1, L}$ to the vanishing of cohomology of line bundle  $(N_{p+1, L} \boxtimes N_{n,B})(-D_{p+1,n})$.

\begin{lemma}[{cf. \cite[Lemma 3.3]{CKP}}]\label{lem:R^ipr_1}
Assume that $B$ is a $p$-very ample line bundle on $C$. Let $\pr_1 \colon C_{p+1} \times C_n \to C_{p+1}$ be the projection map. 
Then there is a natural isomorphism
$$
R^i\pr_{1,*} \big((\sO_{C_{p+1}}\boxtimes N_{n,B})(-D_{p+1,n})\big)\cong\wedge^{n-i}M_{p+1,B}\otimes S^iH^1(C, B)~~\text{ for $i \geq 0$}.
$$
\end{lemma}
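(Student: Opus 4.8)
The plan is to compute the higher direct images $R^i\pr_{1,*}$ by combining a fiberwise computation with the theorem on cohomology and base change, and to obtain the asserted \emph{natural} isomorphism by relativizing the tautological determinant construction over $C_{p+1}$. Write $\mathcal{F}:=(\sO_{C_{p+1}}\boxtimes N_{n,B})(-D_{p+1,n})$. The two inputs I need are: (i) an identification of the restriction of $\mathcal{F}$ to each fiber $\{\xi\}\times C_n\cong C_n$ with a line bundle of the form $N_{n,\,\bullet}$, so that its cohomology is governed by Lemma \ref{lem:H^i(N)H^i(S)}; and (ii) constancy of the fiber dimensions $h^i$, which is exactly where the $p$-very ampleness of $B$ enters.

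First I would establish the fiber identity
$$
\mathcal{F}\big|_{\{\xi\}\times C_n}\;\cong\;N_{n,\,B(-\xi)}\qquad\text{for every }\xi\in C_{p+1}.
$$
The building block is the formula $N_{n,L(-x)}\cong N_{n,L}(-X_x)$, where $X_x:=\sigma_{n-1,1}(C_{n-1}\times\{x\})\subseteq C_n$ is the divisor of those $\eta$ with $x\in\Supp(\eta)$. This comes from pushing the exact sequence $0\to L(-x)\to L\to L|_x\to 0$ forward under $\sigma_{n-1,1,*}(\sO_{C_{n-1}}\boxtimes-)$, which is exact since $\sigma_{n-1,1}$ is finite, to get $0\to E_{n,L(-x)}\to E_{n,L}\to\sO_{X_x}\to 0$, and then taking determinants together with $\det\sO_{X_x}=\sO_{C_n}(X_x)$. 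Writing $\xi=x_1+\cdots+x_{p+1}$ and noting that $D_{p+1,n}$ restricts to $\sum_j X_{x_j}$ on $\{\xi\}\times C_n$, iterating the building block yields the claimed identity.

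Next, by Lemma \ref{lem:H^i(N)H^i(S)} the fiber cohomology is
$$
H^i(C_n,N_{n,B(-\xi)})=\wedge^{n-i}H^0(C,B(-\xi))\otimes S^iH^1(C,B(-\xi)).
$$
Here the $p$-very ampleness of $B$ is decisive: for any effective $\xi$ of degree $p+1$ the restriction $H^0(C,B)\to H^0(\xi,B|_\xi)$ is surjective, so $h^0(C,B(-\xi))=r(B)-p$ and $H^1(C,B(-\xi))\cong H^1(C,B)$, both independent of $\xi$. Hence the functions $\xi\mapsto h^i(C_n,\mathcal{F}|_{\{\xi\}\times C_n})$ are constant, cohomology and base change applies, and each $R^i\pr_{1,*}\mathcal{F}$ is locally free with fiber $\wedge^{n-i}H^0(C,B(-\xi))\otimes S^iH^1(C,B)$. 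Since the fiber of $M_{p+1,B}$ at $\xi$ is exactly $H^0(C,B(-\xi))=\Ker\big(H^0(C,B)\to H^0(\xi,B|_\xi)\big)$ by the defining sequence (\ref{eq:sesforM_{p+1,B}}), this fiber agrees with that of $\wedge^{n-i}M_{p+1,B}\otimes S^iH^1(C,B)$, so the two locally free sheaves have equal rank and equal fibers.

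It remains to promote this fiberwise agreement to a canonical isomorphism, and this is the step I expect to be the main obstacle. The model is $n=1$: there $\mathcal{F}=\pr_C^*B(-D_{p+1,1})$, and pushing the sequence $0\to\pr_C^*B(-D_{p+1,1})\to\pr_C^*B\to\pr_C^*B|_{D_{p+1,1}}\to 0$ forward under $\pr_1$ gives, using $\pr_{1,*}\pr_C^*B=H^0(C,B)\otimes\sO_{C_{p+1}}$ and $\pr_{1,*}(\pr_C^*B|_{D_{p+1,1}})=E_{p+1,B}$, precisely the evaluation map $\operatorname{ev}_{p+1,B}$; its surjectivity (again $p$-very ampleness) identifies $\pr_{1,*}\mathcal{F}=M_{p+1,B}$ and $R^1\pr_{1,*}\mathcal{F}=H^1(C,B)\otimes\sO_{C_{p+1}}$ canonically. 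For general $n$ I would relativize this computation over $C_{p+1}$: replacing the fixed line bundle $L$ in the proof of Lemma \ref{lem:H^i(N)H^i(S)} by the relative line bundle $\pr_C^*B(-D_{p+1,1})$ on $C_{p+1}\times C$ (whose restriction to $\{\xi\}\times C$ is $B(-\xi)$) and carrying the argument through $C_{p+1}\times C_{n-1}\times C\xrightarrow{\mathrm{id}\times\sigma_{n-1,1}}C_{p+1}\times C_n$ produces a natural map between $R^i\pr_{1,*}\mathcal{F}$ and $\wedge^{n-i}M_{p+1,B}\otimes S^iH^1(C,B)$, which by the previous paragraph is a fiberwise isomorphism and hence an isomorphism. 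The delicate points are the uniform treatment of non-reduced $\xi$ (handled by working with the universal divisor $D_{p+1,1}$ rather than with individual points) and checking that the relative construction indeed reproduces the natural map, rather than merely an abstract fiberwise identification; this is what makes the naturality assertion nontrivial.
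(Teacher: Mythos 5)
Your first two steps match the paper exactly: the identification $\mathcal{F}|_{\{\xi\}\times C_n}\cong N_{n,B(-\xi)}$, the computation of its cohomology via Lemma \ref{lem:H^i(N)H^i(S)}, and the use of $p$-very ampleness to make $h^0(C,B(-\xi))$ and $h^1(C,B(-\xi))$ constant (with $H^1(C,B(-\xi))=H^1(C,B)$) so that Grauert/base change applies and both sides are locally free of the same rank with the same fibers. Up to that point the argument is correct and is the same as the paper's.

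The gap is in the last step, which you yourself flag as the main obstacle: you never actually produce the natural map, you only describe a program (relativizing the proof of Lemma \ref{lem:H^i(N)H^i(S)} over $C_{p+1}$ through $C_{p+1}\times C_{n-1}\times C\to C_{p+1}\times C_n$) and list its ``delicate points'' without resolving them. The missing idea is much simpler than what you propose. Push forward the tautological inclusion of sheaves $(\sO_{C_{p+1}}\boxtimes N_{n,B})(-D_{p+1,n})\hookrightarrow \sO_{C_{p+1}}\boxtimes N_{n,B}$ under $\pr_1$. Since the second factor is pulled back from $C_n$, its $R^i\pr_{1,*}$ is the trivial bundle $H^i(C_n,N_{n,B})\otimes\sO_{C_{p+1}}=\wedge^{n-i}H^0(C,B)\otimes S^iH^1(C,B)\otimes\sO_{C_{p+1}}$, so one gets for free a natural map
$$
\delta^i\colon R^i\pr_{1,*}\big((\sO_{C_{p+1}}\boxtimes N_{n,B})(-D_{p+1,n})\big)\longrightarrow \wedge^{n-i}H^0(C,B)\otimes S^iH^1(C,B)\otimes\sO_{C_{p+1}}.
$$
Fiber by fiber, $\delta^i_\xi$ factors through the base-change isomorphism and has image $H^i(C_n,N_{n,B(-\xi)})=\wedge^{n-i}H^0(C,B(-\xi))\otimes S^iH^1(C,B)$, which is precisely the fiber at $\xi$ of the subbundle $\wedge^{n-i}M_{p+1,B}\otimes S^iH^1(C,B)$ under the inclusion $\alpha^i$ induced by the sequence (\ref{eq:sesforM_{p+1,B}}). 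Hence $\delta^i$ factors through $\alpha^i$ and surjects onto that subbundle, and equality of ranks forces it to be an isomorphism. This both supplies the naturality you were missing and handles non-reduced $\xi$ uniformly; I would replace your relative-construction sketch with this argument.
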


\begin{proof}
Notice that for $\xi \in C_{p+1}$, the restriction of the sheaf $(\sO_{C_{p+1}}\boxtimes N_{n,B})(-D_{p+1,n})$ to the fiber $\pr^{-1}_1(\xi) \cong C_n$ is $N_{n,B(-\xi)}$. By Lemma \ref{lem:H^i(N)H^i(S)},
$$
\begin{array}{rcl}
H^i(C_{n},N_{n,B(-\xi)})&=&\wedge^{n-i}H^0(C, B(-\xi))\otimes S^iH^1(C, B(-\xi)) \\
&\subseteq &\wedge^{n-i}H^0(C, B)\otimes S^iH^1(C, B) \,=\, H^i(C_n, N_{n, B}).
\end{array}
$$
As $B$ is $p$-very ample, both $h^0(C, B(-\xi))$ and $h^1(C, B(-\xi))$ are constants  independent on the choice of $\xi$ and in addition $H^1(C, B(-\xi))=H^1(C, B)$. By the base change, there is a natural isomorphism 
$$
u^i_{\xi} \colon R^i\pr_{1,*}\big( (\sO_{C_{p+1}}\boxtimes N_{n,B})(-D_{p+1,n})\big)\otimes \nC(\xi)\stackrel{\sim}{\longrightarrow}H^i(C_{n},N_{n,B(-\xi)}).
$$
Now, push down the inclusion map 
$$
\sO_{C_{p+1}}\boxtimes N_{n,B}(-D_{p+1,n})\longhookrightarrow \sO_{C_{p+1}}\boxtimes N_{n,B}
$$
by $\pr_1$ to yield 
$$
\delta^i \colon R^i \pr_{1,*}\big( (\sO_{C_{p+1}}\boxtimes N_{n,B})(-D_{p+1,n})\big) \longrightarrow \underbrace{H^i(C_{p+1}, N_{n,B})\otimes \sO_{C_{p+1}}}_{\mathclap{=\wedge^{n-i}H^0(C, B)\otimes S^iH^1(C, B)\otimes \sO_{C_{p+1}}}} ~~\text{for $i\geq 0$}.
$$
Tensoring $\delta^i$ with the residue field $\nC(\xi)$ for each point $\xi\in C_{p+1}$ induces a map 
$$
\delta^i_\xi \colon R^i \pr_{1,*}\big( (\sO_{C_{p+1}}\boxtimes N_{n,B})(-D_{p+1,n})\big) \otimes \nC(\xi) \longrightarrow \wedge^{n-i}H^0(C, B)\otimes S^iH^1(C, B) 
$$
which factors through the map $u^i_{\xi}$ and has the image $H^i(C_{n},N_{n,B(-\xi)})$.
On the other hand, the short exact sequence (\ref{eq:sesforM_{p+1,B}}) induces an inclusion 
$$
\alpha^i \colon \wedge^{n-i}M_{p+1,B}\otimes S^iH^1(C, B)\longhookrightarrow \wedge^{n-i}H^0(C, B)\otimes S^iH^1(C, B)\otimes \sO_{C_{p+1}}.
$$ 
We claim that the map $\delta^i$ factors through $\alpha^i$. Indeed, for $\xi\in C_{p+1}$, we have 
$$\wedge^{n-i}M_{p+1,B}\otimes S^iH^1(C, B)\otimes \nC(\xi)=\wedge^{n-i}H^0(C, B(-\xi))\otimes S^iH^1(C,, B)=H^i(C_{n},N_{n,B(-\xi)}).
$$
This means that 
$$
\delta^i_{\xi}\Big(R^i\pr_{1,*}\big((\sO_{C_{p+1}}\boxtimes N_{n,B})(-D_{p+1,n})\big)\otimes \nC(\xi)\Big)=\wedge^{n-i}M_{p+1,B}\otimes S^iH^1(C, B)\otimes \nC(\xi).
$$ 
Thus $\delta^i$ maps $R^i\pr_{1,*}\big( (\sO_{C_{p+1}}\boxtimes N_{n,B})(-D_{p+1,n})\big)$ surjectively onto $\wedge^{n-i}M_{p+1,B}\otimes S^iH^1(C, B)$, and it must be an isomorphism since both are vector bundles of the same rank. 
\end{proof}

\section{Effective vanishing for weight-one syzygies of algebraic curves}\label{sec:vanishing}

\noindent We continue to assume that $C$ is a smooth projective curve of genus $g \geq 0$ and $B,L$ are line bundles on $C$. This section is devoted to proving Theorem \ref{main:03}. For this purpose, we develop a general technique to show the vanishing of the cohomology groups of the form $H^m(C_{p+1},\wedge^n M_{p+1,B}\otimes N_{p+1, L})$. The basic idea is that such vanishing can be transferred to the vanishing of certain cohomology groups of the sheaves $N_{p+1,L}\boxtimes N_{p+1,B}(-D_{p+1,n})$ on $C_{p+1}\times C_{n}$. Then these vanishing are controlled  by the dimensions of suitable loci in the space $C_n$ and eventually can be controlled by the dimension of global sections of $L$.

\begin{proposition}\label{prop:H^m(MotimesE)=0}
Let $B$ be a $p$-very ample line bundle on $C$, and $\sE$ be a locally free sheaf on $C_{p+1}$. For integers $m,n\geq 0$, we have
	$$H^m(C_{p+1},\wedge^nM_{p+1,B}\otimes \sE)=0$$
provided that the following two conditions are satisfied:
\begin{enumerate}
	\item [(1)] $H^m(C_{p+1}\times C_n, (\sE\boxtimes N_{n,B})(-D_{p+1,n}))=0$.
	\item [(2)] $H^{m-j}(C_{p+1}\times C_{n-i}, (\sE\boxtimes N_{n-i,B})(-D_{p+1,n-i}))=0$ for all $i,j$ with $1\leq i\leq m-1$ and $i+1\leq j\leq 2i$.
\end{enumerate} 
\end{proposition}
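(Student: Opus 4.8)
The plan is to run the Leray spectral sequence for the projection $\pr_1 \colon C_{p+1} \times C_n \to C_{p+1}$ applied to the sheaf $\mathcal{F} := (\sE \boxtimes N_{n,B})(-D_{p+1,n})$, and to argue by induction on $n$. By the projection formula (using that $\sE$ is locally free) together with Lemma \ref{lem:R^ipr_1}, the higher direct images are
$$
R^b \pr_{1,*} \mathcal{F} \cong \sE \otimes \wedge^{n-b} M_{p+1,B} \otimes S^b H^1(C,B),
$$
so that the $E_2$-page reads
$$
E_2^{a,b} = H^a(C_{p+1}, \sE \otimes \wedge^{n-b} M_{p+1,B}) \otimes S^b H^1(C,B) \Longrightarrow H^{a+b}(C_{p+1}\times C_n, \mathcal{F}).
$$
The term of interest is $E_2^{m,0} = H^m(C_{p+1}, \sE \otimes \wedge^n M_{p+1,B})$, which is precisely the group we want to vanish. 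The base case $n=0$ is immediate: here $\wedge^0 M_{p+1,B} = \sO_{C_{p+1}}$ while $C_0$ is a point with $N_{0,B}=\sO$ and $D_{p+1,0}=\emptyset$, so condition (1) literally reads $H^m(C_{p+1},\sE)=0$, which is the assertion.

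For the inductive step, note that no nonzero differential leaves $E_r^{m,0}$, since its target $E_r^{m+r,\,1-r}$ lies in negative vertical degree for $r\geq 2$; hence $E_2^{m,0}$ surjects onto $E_\infty^{m,0}$, the successive quotients being the images of the incoming differentials $d_r\colon E_r^{m-r,\,r-1}\to E_r^{m,0}$. Condition (1) forces $E_\infty^{m,0}=0$. It therefore suffices to kill every incoming source, i.e.\ to show $E_2^{m-i-1,\,i}=H^{m-i-1}(C_{p+1}, \sE \otimes \wedge^{n-i} M_{p+1,B})\otimes S^i H^1(C,B)=0$ for each $1\leq i\leq m-1$ (writing $i=r-1$); the terms with $m-i-1<0$ or $n-i<0$ vanish trivially. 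Since $n-i<n$, each such group is exactly the conclusion of the proposition for the strictly smaller pair $(m-i-1,\,n-i)$, and I would invoke the inductive hypothesis after checking its hypotheses hold.

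The crux of the argument is the bookkeeping verification that the hypotheses required for these reduced pairs are already contained in the standing hypotheses (1) and (2). The required condition (1) for $(m-i-1,\,n-i)$, namely $H^{m-i-1}(C_{p+1}\times C_{n-i}, (\sE\boxtimes N_{n-i,B})(-D_{p+1,n-i}))=0$, is precisely the $j=i+1$ instance of condition (2). For the required condition (2) for $(m-i-1,\,n-i)$ — vanishing of $H^{(m-i-1)-j'}$ on $C_{p+1}\times C_{(n-i)-i'}$ for $1\leq i'\leq m-i-2$ and $i'+1\leq j'\leq 2i'$ — I would substitute $\tilde{\imath}=i+i'$ and $\tilde{\jmath}=i+1+j'$ and verify that the pair $(\tilde{\imath},\tilde{\jmath})$ lands in the admissible range $\tilde{\imath}+1\leq\tilde{\jmath}\leq 2\tilde{\imath}$, $1\leq\tilde{\imath}\leq m-1$ of condition (2); the inequalities reduce to $i\geq 1$ and $i'\geq 1$, which hold. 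With this index-matching in place, all incoming sources vanish, forcing $E_2^{m,0}=E_\infty^{m,0}=0$ and completing the induction. I expect the main obstacle to be exactly this combinatorial matching — confirming that condition (2) is calibrated to feed the recursion without circularity — rather than any geometric input, since the geometry is entirely supplied by Lemmas \ref{lem:H^i(N)H^i(S)} and \ref{lem:R^ipr_1}.
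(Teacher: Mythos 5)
Your proposal is correct and follows essentially the same route as the paper: the Leray spectral sequence for $\pr_1$ with $E_2^{a,b}=H^a(C_{p+1},\sE\otimes\wedge^{n-b}M_{p+1,B})\otimes S^bH^1(C,B)$, killing $E_2^{m,0}$ via condition (1) for the abutment and via the vanishing of the sources $E_2^{m-i-1,i}$ of the incoming differentials. The only difference is presentational --- you package the recursion as an induction on $n$ with an explicit index substitution $(\tilde{\imath},\tilde{\jmath})=(i+i',\,i+1+j')$, whereas the paper unwinds the same recursion iteratively; your bookkeeping that condition (2) is exactly closed under this substitution is correct.
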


\begin{proof} 
Write  $\pr_1 \colon C_{p+1}\times C_n\rightarrow C_{p+1}$ for the projection map. By Lemma \ref{lem:R^ipr_1}, pushing down the  sheaf $(\sE\boxtimes N_{n,B})(-D_{p+1,n})$ by $\pr_1$ yields
$$
R^j\pr_{1,*}\big((\sE\boxtimes N_{n,B})(-D_{p+1,n})\big)=S^jH^1(C,B)\otimes \wedge^{n-j}M_{p+1,B}\otimes \sE~~ \text{ for $j\geq 0$},
$$
and gives rise to a Leray spectral sequence
$$
E^{i,j}_2:=S^{j}H^1(C,B)\otimes H^{i}(C_{p+1},\wedge^{n-j}M_{p+1,B}\otimes \sE)
$$
converging to
$$
E^{i+j}:=H^{i+j}(C_{p+1}\times C_n,\sE\boxtimes N_{n,B}(-D_{p+1,n})).
$$ 
To prove the proposition, we just need to show $E^{m,0}_2=0$.
From the convergence of the spectral sequence, we observe that the vanishing $E^{m,0}_2=0$ holds if  $E^m=0$ and $E^{m,0}_2=E^{m,0}_{\infty}$. The former condition $E^m=0$ is nothing but the one given in $(1)$, while  the latter condition $E^{m,0}_2=E^{m,0}_{\infty}$ is implied by the vanishing
$$
E^{m-2,1}_2=E^{m-3,2}_2=\cdots =E^{0,m-1}_2=0.
$$
Thus we reduce the problem to showing that
$$
H^{m-1-i}(C_{p+1}, \wedge^{n-i}M_{p+1,B}\otimes \sE)=0 ~~\text{ for $1\leq i\leq m-1$}.
$$
To this end, for each $i$, we argue exactly in the same way above by utilizing the Leray spectral sequence for pushing down of the sheaf $(\sE\boxtimes N_{n-i,B})(-D_{p+1,n-i})$ by the projection map $\pr_1 \colon C_{p+1} \times C_{n-i} \to C_{p+1}$. It turns out that the vanishing of $H^{m-1-i}(C_{p+1}, \wedge^{n-i}M_{p+1,B}\otimes \sE)$ can be reduced to showing the vanishing
$$
H^{m-2-k}(C_{p+1}, \wedge^{n-k} M_{p+1, B} \otimes \sE) = 0~~\text{ for $i+1 \leq k \leq m-2$}
$$
and the vanishing 
$$H^{m-1-i}(C_{p+1}\times C_{n-i},\sE\boxtimes N_{n-i,B}(-D_{p+1,n-i}))=0.$$
But the latter one is already given in condition $(2)$ for $j=i+1$. 
So if we continue to repeat the above arguments to handle vanishing for all the terms of the form $\wedge^* M_{p+1,B}\otimes \sE$ created in the process, eventually we are able to transfer all  required cohomology vanishing on $C_{p+1}$ to the ones exactly given in condition $(2)$.
Therefore, we conclude $H^m(C_{p+1},\wedge^nM_{p+1,B}\otimes \sE)=0$ as desired.
\end{proof}

\begin{remark} 
(1)  When $m=0$ or $1$ in Proposition \ref{prop:H^m(MotimesE)=0}, the second vanishing hypothesis becomes empty so that the vanishing condition $H^m(C_{p+1}\times C_n,\sE\boxtimes N_{n,B}(-D_{p+1,n}))=0$ alone implies the vanishing $H^m(C_{p+1},\wedge^nM_{p+1,B}\otimes \sE)=0$.\\[3pt]
(2) If $H^1(C,B)=0$, then the term $E^{i,j}_2$ of  the Leray spectral sequence for the projection map $\pr_1 \colon C_{p+1}\times C_n\rightarrow C_{p+1}$ vanishes whenever $j \geq 1$. Thus
$$
H^m(C_{p+1},\wedge^nM_{p+1,B}\otimes \sE)=H^m(C_{p+1}\times C_n, (\sE\boxtimes N_{n,B})(-D_{p+1,n})).
$$
\end{remark}

Now, given a line bundle $L$ on the curve $C$, we consider the locus 
$$
\mathcal{L}_n^\ell(L) :=\{\xi\in C_n\mid h^0(C, L(-\xi))\geq \ell\}~~\text{ for integers $n, \ell \geq 0$}.
$$
When $\ell \geq 1$ and $n \geq r(L)-\ell+1$, the locus $\mathcal{L}_n^\ell(L)$ coincides with the secant space $V_n^{r(L)-\ell}(L)$ of $n$-secant $(r(L)-\ell)$-planes to $C$ in $\nP H^0(C, L)$ (see \cite[Definition 1.1]{CM}). In general, the locus $\mathcal{L}_n^{\ell}(L)$ is a closed subset of $C_n$ and there is a natural stratification
$$
C_n=\mathcal{L}^0_n(L)\supseteq \mathcal{L}^1_n(L)\supseteq \mathcal{L}^2_n(L)\supseteq\cdots.
$$ 
When $n=0$, the locus $\mathcal{L}^\ell_0(L)$  either contains a point and has dimension zero, or else is empty and has dimension $-1$ by convention. 

\begin{proposition}\label{prop:dimL^ell_n=>H^n=0}
Let $L$ be  a line bundle on $C$. Fix integers $m,n,p\geq 0$, and suppose that 	
$$
\dim \mathcal{L}^\ell_n(L)\leq p+n-m-\ell
$$
for all integers $\ell\geq 0$ with $p+1-m\leq \ell\leq p+1+n-m$.
Then we have the following:
	\begin{enumerate}
		\item $m\leq p$.
		\item If $\sE$ is a locally free sheaf on $C_n$, then
		$$H^m( C_{p+1}\times C_n, (N_{p+1,L}\boxtimes\sE)(-D_{p+1,n}))=0.$$		
		\item If $\sE$ is a locally free sheaf on $C_{n-i}$ for an integer $i \geq 0$, then
		$$H^{m-j}(C_{p+1}\times C_{n-i}, (N_{p+1,L}\boxtimes \sE )(-D_{p+1,n-i}))=0~~\text{ for $i\leq j\leq 2i$}.$$
	\end{enumerate}
\end{proposition}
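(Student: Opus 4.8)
The plan is to prove all three assertions at once by a single Leray spectral sequence computation, carried out \emph{after} replacing the cohomology group in question by its Serre dual, so that the twist $(-D_{p+1,n})$ turns into $(+D_{p+1,n})$ and the loci $\mathcal{L}^\ell$ appear with the superscripts predicted by the hypothesis. Throughout I write $M:=\omega_C\otimes L^{-1}$. First I would record the duality. Since $\omega_{C_{p+1}\times C_n}=N_{p+1,\omega_C}\boxtimes N_{n,\omega_C}$, and since $N_{m,L}=S_{m,L}(-\delta_m)$ together with the multiplicativity $S_{m,A}\otimes S_{m,B}=S_{m,A\otimes B}$ give $N_{p+1,\omega_C}\otimes N_{p+1,L}^{-1}=S_{p+1,M}$, Serre duality on $C_{p+1}\times C_n$ turns the vanishing of (2) into
$$
H^{p+1+n-m}\big(C_{p+1}\times C_n,(S_{p+1,M}\boxtimes\sE')(D_{p+1,n})\big)=0,
$$
where $\sE':=N_{n,\omega_C}\otimes\sE^\vee$ is again locally free on $C_n$; the group in (3) is treated identically on $C_{p+1}\times C_{n-i}$, with target degree $M_{i,j}:=p+1+(n-i)-(m-j)$.

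Next I would run the Leray spectral sequence for the second projection $\pr_2\colon C_{p+1}\times C_{n-i}\to C_{n-i}$, whose target carries the loci. The local input is the fiberwise identity $S_{p+1,M}\big(D_{p+1,n-i}|_{C_{p+1}\times\{\xi\}}\big)=S_{p+1,M(\xi)}$, which follows (via $S_{m,L}=N_{m,L}(\delta_m)$) from the same restriction formula underlying Lemma \ref{lem:R^ipr_1}. Hence by Lemma \ref{lem:H^i(N)H^i(S)} the fiber cohomology is $H^b(C_{p+1},S_{p+1,M(\xi)})=S^{p+1-b}H^0(M(\xi))\otimes\wedge^bH^1(M(\xi))$, and since $h^1(M(\xi))=h^0(L(-\xi))$ by Serre duality on $C$, this is nonzero for $b\geq1$ only when $\xi\in\mathcal{L}^b_{n-i}(L)$. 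Pulling out the locally free factor $\sE'$ by the projection formula and applying upper semicontinuity together with Grauert's theorem (the base $C_{n-i}$ being reduced), I obtain
$$
\Supp R^b\pr_{2,*}\big((S_{p+1,M}\boxtimes\sE')(D_{p+1,n-i})\big)\subseteq\mathcal{L}^b_{n-i}(L)\qquad(b\geq1).
$$

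Assertion (1) I would extract first, as it controls the $b=0$ column. We may assume $m\leq p+1+n=\dim(C_{p+1}\times C_n)$, since otherwise the cohomology vanishes for dimension reasons; then if $m\geq p+1$ the value $\ell=0$ lies in the admissible range and the hypothesis reads $n=\dim\mathcal{L}^0_n(L)\leq p+n-m<n$, a contradiction, so $m\leq p$. With $m\leq p$ the $b=0$ term $H^{M_{i,j}}\!\big(C_{n-i},\pr_{2,*}(\cdots)\big)$ vanishes by Grothendieck vanishing, because $M_{i,j}-\dim C_{n-i}=p+1-m+j>0$. For $b\geq1$ it suffices that $M_{i,j}-b>\dim\mathcal{L}^b_{n-i}(L)$, and here I would feed in the comparison $\dim\mathcal{L}^b_{n-i}(L)\leq\dim\mathcal{L}^{b-i}_n(L)-i$, coming from the finite addition map $\mathcal{L}^b_{n-i}(L)\times C_i\to C_n$ which lands in $\mathcal{L}^{b-i}_n(L)$, followed by the hypothesis bound on $\dim\mathcal{L}^{b-i}_n(L)$ with $\ell=b-i$. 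A short computation shows the required inequality reduces to exactly $i\leq j\leq2i$: the lower bound $j\geq i$ absorbs the shift of $2i$ produced by the comparison, while the upper bound $j\leq2i$ keeps $\ell=b-i$ inside the admissible interval $[p+1-m,\,p+1+n-m]$; for the finitely many $b$ with $\ell=b-i<p+1-m$ the crude bound $\dim\mathcal{L}^b_{n-i}(L)\leq n-i$ already closes the estimate. Setting $i=j=0$ recovers (2).

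The step I expect to be the main obstacle is precisely this bookkeeping: one must verify the vanishing of every term $E_2^{M_{i,j}-b,\,b}$ as $b$ ranges over $0,1,\dots,\min(M_{i,j},p+1)$, separating the values of $\ell=b-i$ that fall inside the hypothesis range (handled by the locus comparison plus the dimension bound) from those below it (handled by the trivial bound), and confirming that the constraint $i\leq j\leq2i$ is exactly what makes all cases close simultaneously. By contrast, the geometric ingredients — the Serre-duality rewriting, the identification of $\Supp R^b\pr_{2,*}$ with the secant-type loci via Lemma \ref{lem:H^i(N)H^i(S)}, and the additive comparison of loci in $C_{n-i}$ and $C_n$ — are comparatively routine once the indexing has been pinned down.
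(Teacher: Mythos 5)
Your proposal is correct and follows essentially the same route as the paper: Serre duality converts $(-D_{p+1,n-i})$ into $(+D_{p+1,n-i})$, the Leray spectral sequence for $\pr_2$ localizes the higher direct images on the loci $\mathcal{L}^b_{n-i}(L)$ via Lemma \ref{lem:H^i(N)H^i(S)}, and the finite addition map gives $\dim\mathcal{L}^{b}_{n-i}(L)\le\dim\mathcal{L}^{b-i}_{n}(L)-i$, with $i\le j\le 2i$ playing exactly the role you describe. The only (cosmetic) difference is that the paper proves (2) first and deduces (3) by verifying the hypothesis of (2) for the shifted parameters $(m-j,\,n-i)$, whereas you run the two cases through a single indexed computation.
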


\begin{proof} 
$(1)$ If $m \geq p+1$,  we can take $\ell=0$ and then $n=\dim \mathcal{L}^0_n(L)\leq p+n-m\leq n-1$, which is a contradiction. 

\medskip

\noindent $(2)$ Let $\pr_2 \colon C_{p+1}\times C_n\rightarrow C_n$ be the projection map. Recall that the dualizing sheaf of $C_{p+1}\times C_n$ is $\omega_{C_{p+1} \times C_n} = N_{p+1,\omega_C}\boxtimes N_{n,\omega_C}$. By Serre duality, the cohomology in $(2)$ is dual to 
\begin{equation}\label{eq:01}
H^{p+1+n-m}(C_{p+1}\times C_n, (S_{p+1,\omega_C\otimes L^{-1}}\boxtimes (\sE^\vee \otimes N_{n,\omega_C}))(D_{p+1,n})).
\end{equation}
The Leray spectral sequence for $\pr_2$ yields 
$$
E_2^{p+1+n-m-\ell,\ell}:=H^{p+1+n-m-\ell}(C_n, \sE^\vee \otimes N_{n,\omega_C}\otimes R^{\ell}\pr_{2,*}(S_{p+1,\omega\otimes L^{-1}}\boxtimes \sO_{C_n}(D_{n,p+1})))
$$
converging to the cohomology in (\ref{eq:01}). It is enough to show that
$$
E_2^{p+1+n-m-\ell, \ell} = 0~~\text{ for $0 \leq \ell \leq p+1+n-m$}.
$$
By $(1)$, we get $E_2^{p+1+n-m,0}=0$. For $\ell \geq 1$, it suffices to show that 
\begin{equation}\label{eq:dim R^ell}
\dim \Supp R^{\ell} \pr_{2,*}(S_{p+1,\omega\otimes L^{-1}}\boxtimes\sO_{C_n}(D_{p+1,n})) \leq p+n-m-\ell.
\end{equation}
To this end, note that the restriction of the sheaf $(S_{p+1,\omega\otimes L^{-1}}\boxtimes \sO_{C_n})(D_{p+1,n})$ to the fiber $\pr^{-1}_2(\xi)\cong C_{p+1}$ over $\xi\in C_n$ is $S_{p+1,\omega\otimes L^{-1}(\xi)}$. By Lemma \ref{lem:R^ipr_1}, we have
$$
H^{\ell}(C_{p+1},S_{p+1,\omega_C\otimes L^{-1}(\xi)})=S^{p+1-\ell}H^0(C,\omega_C\otimes L^{-1}(\xi))\otimes \wedge^{\ell}H^1(C,\omega_C\otimes L^{-1}(\xi))=0
$$
if $h^1(C,\omega_C\otimes L^{-1}(\xi))=h^0(C,L(-\xi))<\ell$. This suggests that
$$
\Supp R^{\ell} \pr_{2,*}(S_{p+1,\omega\otimes L^{-1}}\boxtimes\sO_{C_n}(D_{p+1,n}))\subseteq \mathcal{L}^{\ell}_n(L).
$$
By the assumption on the dimension of $\mathcal{L}^{\ell}_n(L)$, we obtain the inequality in (\ref{eq:dim R^ell}) and hence conclude that $E_2^{p+1+n-m-\ell,\ell}=0$.

\medskip

\noindent $(3)$ In view of $(2)$, it is sufficient to show that
\begin{equation}\label{eq:dimL^ell_n-i}
\dim \mathcal{L}^{\ell}_{n-i}(L)\leq p+n-i-m+j-\ell
\end{equation}
for all integers $\ell\geq 0$ with $p+1-m+j\leq \ell\leq p+1+n-i-m+j$. Recall that the addition map $\sigma:=\sigma_{n-i,1} \colon C_{n-i} \times C \to C_{n-i+1}$ sending $(\xi,x)$ to $\xi+x$ is a finite morphism. Observe that 
$$
\mathcal{L}^{\ell}_{n-i+1}(L)\subseteq \sigma(\mathcal{L}^{\ell}_{n-i}(L)\times C)\subseteq \mathcal{L}^{\ell-1}_{n-i+1}(L),
$$
which in particular implies that 
$$
\dim \mathcal{L}^{\ell}_{n-i}(L) \leq \dim \mathcal{L}^{\ell-1}_{n-i+1}(L)-1.
$$
Iterating this inequality, we obtain
$$
\dim \mathcal{L}^{\ell}_{n-i}(L) \leq \dim \mathcal{L}^{\ell-i}_{n}(L)-i.
$$
Recall that $\ell$ is an integer with $p+1-m+j\leq \ell\leq p+1+n-i-m+j$. Since $p\geq m$ and $i\leq j \leq 2i$, we get $p+1-m\leq\ell-i\leq p+1+n-m$ and $\ell \geq i+1$. We deduce (\ref{eq:dimL^ell_n-i}) from the given assumption as follows:
$$
\dim \mathcal{L}^{\ell}_{n-i}(L)\leq \dim \mathcal{L}^{\ell-i}_{n}(L)-i\leq p+n-m-\ell\leq p+n-i-m+j-\ell.
$$
This  completes the proof.
\end{proof}

\begin{corollary}\label{cor:01} Let $B$ be a $p$-very ample line bundle and $L$ a line bundle on $C$. Fix integers $m,n\geq 0$ and suppose that 
$$
\dim \mathcal{L}^\ell_n(L)\leq p+n-m-\ell
$$
for all integers $\ell\geq 0$ with $p+1-m\leq \ell\leq p+1+n-m$. Then 
$$
H^m(C_{p+1},\wedge^nM_{p+1,B}\otimes N_{p+1,L})=0.
$$
\end{corollary}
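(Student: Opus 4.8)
The plan is to feed the two preceding propositions into one another, taking $\sE := N_{p+1,L}$ as the locally free sheaf in Proposition \ref{prop:H^m(MotimesE)=0}. That proposition reduces the desired vanishing $H^m(C_{p+1},\wedge^nM_{p+1,B}\otimes N_{p+1,L})=0$ to two families of cohomology vanishing statements on products $C_{p+1}\times C_{n-i}$ for the line-bundle-type sheaves $(N_{p+1,L}\boxtimes N_{n-i,B})(-D_{p+1,n-i})$, and the role of Proposition \ref{prop:dimL^ell_n=>H^n=0} is precisely to supply those vanishing statements out of the dimension hypothesis on the loci $\mathcal{L}^\ell_n(L)$. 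So the entire argument is a matter of aligning the outputs of one proposition with the inputs of the other; no new geometry is needed.

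First I would observe that the hypothesis of the corollary is verbatim the hypothesis of Proposition \ref{prop:dimL^ell_n=>H^n=0} (same $m,n,p$ and the same inequality $\dim\mathcal{L}^\ell_n(L)\leq p+n-m-\ell$ over $p+1-m\leq\ell\leq p+1+n-m$). I would then check condition $(1)$ of Proposition \ref{prop:H^m(MotimesE)=0}: with $\sE=N_{p+1,L}$ this asks for $H^m(C_{p+1}\times C_n,(N_{p+1,L}\boxtimes N_{n,B})(-D_{p+1,n}))=0$, which is exactly Proposition \ref{prop:dimL^ell_n=>H^n=0}$(2)$ applied to the locally free sheaf $N_{n,B}$ on $C_n$. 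Here one must be mildly careful that the external-product orientations agree: Proposition \ref{prop:H^m(MotimesE)=0}$(1)$ places $\sE$ on the $C_{p+1}$ factor, while Proposition \ref{prop:dimL^ell_n=>H^n=0}$(2)$ places $N_{p+1,L}$ on the $C_{p+1}$ factor, and these coincide under our choice of $\sE$.

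Next I would verify condition $(2)$ of Proposition \ref{prop:H^m(MotimesE)=0}, namely $H^{m-j}(C_{p+1}\times C_{n-i},(N_{p+1,L}\boxtimes N_{n-i,B})(-D_{p+1,n-i}))=0$ for all $i,j$ with $1\leq i\leq m-1$ and $i+1\leq j\leq 2i$. For each fixed such $i$, this is delivered by Proposition \ref{prop:dimL^ell_n=>H^n=0}$(3)$ applied to the locally free sheaf $N_{n-i,B}$ on $C_{n-i}$, which produces the vanishing on the wider range $i\leq j\leq 2i$; since the required range $i+1\leq j\leq 2i$ sits inside $i\leq j\leq 2i$, every instance we need is covered. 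With conditions $(1)$ and $(2)$ both confirmed, Proposition \ref{prop:H^m(MotimesE)=0} yields the conclusion.

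The step demanding the most attention—though it is bookkeeping rather than a genuine obstacle—is matching the numerology: confirming that the cohomological degrees $m-j$ and the subscripts $n-i$ line up on both sides, that the degree ranges in condition $(2)$ of Proposition \ref{prop:H^m(MotimesE)=0} are contained in those produced by Proposition \ref{prop:dimL^ell_n=>H^n=0}$(3)$, and that the orientation of the box products is consistent. All the substantive work—the Leray spectral sequence arguments, the identification of the higher direct images via Lemma \ref{lem:R^ipr_1}, and the passage from $\dim\mathcal{L}^\ell_n(L)$ to the support estimate for $R^\ell\pr_{2,*}$—has already been absorbed into the two propositions, so once they are aligned the corollary follows immediately.
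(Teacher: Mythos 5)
Your proof is correct and is essentially identical to the paper's: both apply Proposition \ref{prop:H^m(MotimesE)=0} with $\sE=N_{p+1,L}$ and verify its two hypotheses via parts (2) and (3) of Proposition \ref{prop:dimL^ell_n=>H^n=0} applied to $N_{n,B}$ and $N_{n-i,B}$, using that the range $i+1\leq j\leq 2i$ is contained in $i\leq j\leq 2i$. The bookkeeping checks you flag (box-product orientation, degree ranges) are exactly the ones needed, and they go through.
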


\begin{proof}
By Proposition \ref{prop:H^m(MotimesE)=0}, it suffices to check that
$$
H^{m-j}(C_{p+1} \times C_{n-i}, (N_{p+1, L} \boxtimes N_{n-i,B})(-D_{p+1, n-i}))=0
$$
for all $i,j$ with $0 \leq i \leq m-1$ and $i \leq j \leq 2i$. But these cohomology vanishing follow from Proposition \ref{prop:dimL^ell_n=>H^n=0}.
\end{proof}

We are ready to prove the following theorem which is the main technical result of this section. Notice that it does not require any positivity condition on $L$. 

\begin{theorem}\label{thm:01} 
Let $B$ be a $p$-very ample line bundle and $L$ a line bundle on $C$. Fix integers $m,n\geq 0$ with $m\leq p$. If
$h^0(C,L)\leq p+n-m$, then  
$$
H^m(C_{p+1},\wedge^nM_{p+1,B}\otimes N_{p+1,L})=0.
$$
\end{theorem}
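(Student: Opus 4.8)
The plan is to derive the statement directly from Corollary \ref{cor:01}, so that the entire content collapses to a single purely geometric dimension estimate on the secant-type loci. Concretely, I would prove that for \emph{any} line bundle $L$ on $C$ and every integer $\ell\geq 1$,
$$
\dim \mathcal{L}^\ell_n(L)\leq h^0(C,L)-\ell,
$$
with the convention that the locus is empty (dimension $-1$) once $h^0(C,L)<\ell$. Granting this, the hypothesis $h^0(C,L)\leq p+n-m$ gives $h^0(C,L)-\ell\leq p+n-m-\ell$; and since $m\leq p$ forces every $\ell$ in the range $p+1-m\leq \ell\leq p+1+n-m$ to satisfy $\ell\geq 1$, the dimension hypothesis of Corollary \ref{cor:01} is verified and the desired vanishing $H^m(C_{p+1},\wedge^nM_{p+1,B}\otimes N_{p+1,L})=0$ follows immediately.

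To establish the estimate I would first settle the case $\ell=1$ by an incidence argument. Consider
$$
\widetilde{\mathcal{L}}=\{(\xi,[s])\in C_N\times \mathbb{P}H^0(C,L)\mid \operatorname{div}(s)\geq \xi\},
$$
whose image under the first projection is precisely $\mathcal{L}^1_N(L)$. The second projection sends $(\xi,[s])$ to $[s]$, and its fiber over $[s]$ is the set of degree-$N$ subdivisors of the fixed effective divisor $\operatorname{div}(s)$, which is a finite set (even when $\operatorname{div}(s)$ is non-reduced). Hence $\dim \widetilde{\mathcal{L}}\leq \dim \mathbb{P}H^0(C,L)=h^0(C,L)-1$, and therefore $\dim \mathcal{L}^1_N(L)\leq h^0(C,L)-1$ for every $N$.

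For general $\ell\geq 1$ I would bootstrap to the previous case using the finite addition map $\sigma\colon C_N\times C\to C_{N+1}$, exactly as in the proof of Proposition \ref{prop:dimL^ell_n=>H^n=0}(3). Since $h^0(C,L(-\xi-x))\geq h^0(C,L(-\xi))-1$, one has $\sigma(\mathcal{L}^\ell_N(L)\times C)\subseteq \mathcal{L}^{\ell-1}_{N+1}(L)$, and finiteness of $\sigma$ yields $\dim \mathcal{L}^\ell_N(L)+1\leq \dim \mathcal{L}^{\ell-1}_{N+1}(L)$. Iterating this $\ell-1$ times produces
$$
\dim \mathcal{L}^\ell_n(L)\leq \dim \mathcal{L}^1_{n+\ell-1}(L)-(\ell-1)\leq \big(h^0(C,L)-1\big)-(\ell-1)=h^0(C,L)-\ell,
$$
the last inequality being the $\ell=1$ bound just proved. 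This completes the estimate and hence the theorem.

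I expect the only substantive step to be the dimension estimate on $\mathcal{L}^\ell_n(L)$; after it is in hand, the theorem is a formal application of Corollary \ref{cor:01}. Within that estimate the crux is the $\ell=1$ incidence computation, where one must take care that the fibers of the projection to $\mathbb{P}H^0(C,L)$ are genuinely finite. The passage to general $\ell$ is then routine, and it is worth emphasizing that nowhere is any positivity of $L$ invoked, in agreement with the remark preceding the statement.
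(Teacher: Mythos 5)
Your proposal is correct and follows essentially the same route as the paper: both reduce the theorem to the dimension estimate $\dim \mathcal{L}^\ell_n(L)\leq h^0(C,L)-\ell$ required by Corollary \ref{cor:01}, established by a dimension count on an incidence correspondence involving the linear system $|L|$. The only (cosmetic) difference is that you prove the estimate for $\ell=1$ and then bootstrap with the addition map as in Proposition \ref{prop:dimL^ell_n=>H^n=0}(3), whereas the paper handles all $\ell$ at once by noting that the fibers of $\sigma_{n,d-n}^{-1}(|L|)\to \mathcal{L}^\ell_n(L)$ are the linear systems $|L(-\xi)|$ of dimension at least $\ell-1$.
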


\begin{proof}
It is enough to verify the hypothesis on the dimension of $\mathcal{L}^\ell_n(L)$ in Corollary \ref{cor:01}. To derive a contradiction, suppose that there exits an integer $\ell$ with $p+1-m\leq \ell\leq p+1+n-m$ such that 
	$$\dim \mathcal{L}^\ell_n(L)\geq p+n-m-\ell+1.$$
Set $d:=\deg L$, and consider the closed subset 
$$
C^{\ell}_{d-n}:=\{\xi'\in C_{d-n}\mid h^0(C, \sO_C(\xi'))\geq \ell\}\subseteq C_{d-n}.
$$
Write $\sigma_{n,d-n} \colon C_{n}\times C^{\ell}_{d-n}\rightarrow C_d$ for the addition map given by $(\xi',\xi) \mapsto \xi'+\xi$, and  consider the projection map $\pr_1 \colon C_n\times C^{\ell}_{d-n}\rightarrow C_n$. Observe that 
$$
\mathcal{L}^{\ell}_n(L)=\pr_{1}(\sigma_{n,d-n}^{-1}(|L|)).
$$
For any $\xi\in \mathcal{L}^{\ell}_n(L)$, we can identify $\pr_1^{-1}(\xi) = C^{\ell}_{d-n} \subseteq C_{d-n}$. 
Then 
$$
\pr_1^{-1}(\xi)\cap \sigma_{n,d-n}^{-1}(|L|)=|L(-\xi)|
$$
which has dimension $\geq \ell-1$. This means that the projection from $\sigma_{n,d-n}^{-1}(|L|)$ to $\mathcal{L}^{\ell}_n(L)$ has fibers of dimensions at least $\ell-1$. Since $\sigma_{n,d-n}$ is a finite morphism, we can count dimensions:
$$
\dim |L|\geq \sigma^{-1}_{n,d-n}(|L|)\geq \dim \mathcal{L}^{\ell}_n(L)+\ell-1\geq p+n-m.
$$
This gives a contradiction to the assumption that $h^0(C,L)\leq p+n-m$.
\end{proof}

\begin{corollary}\label{cor:h^1(C,L-B)<=r(B)-p-1}
Let $B$ be a $p$-very ample line bundle and $L$ a line bundle on $C$. Assume that $h^1(C, L \otimes B^{-1}) \leq r(B)-p-1$. Then we have
$$
H^1(C_{p+1}, M_{p+1,B}\otimes N_{p+1,L})=0.
$$
\end{corollary}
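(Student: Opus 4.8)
The plan is to deduce this from Theorem~\ref{thm:01} by dualizing on the symmetric product $C_{p+1}$. Since $\dim C_{p+1}=p+1$ and $\omega_{C_{p+1}}=N_{p+1,\omega_C}$, Serre duality gives
$$
H^1(C_{p+1}, M_{p+1,B}\otimes N_{p+1,L})^{\vee}\cong H^{p}(C_{p+1}, M_{p+1,B}^{\vee}\otimes N_{p+1,L}^{-1}\otimes N_{p+1,\omega_C}).
$$
The first task is to put the dual bundle into the shape appearing in Theorem~\ref{thm:01}. From the defining sequence (\ref{eq:sesforM_{p+1,B}}) the bundle $M_{p+1,B}$ has rank $r(B)-p$ and determinant $\det M_{p+1,B}=N_{p+1,B}^{-1}$, so the standard identity $F^{\vee}\cong \wedge^{\rank F-1}F\otimes(\det F)^{-1}$ yields $M_{p+1,B}^{\vee}\cong \wedge^{r(B)-p-1}M_{p+1,B}\otimes N_{p+1,B}$.

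I would then collapse the remaining product of determinant line bundles. Using $N_{m,A}=S_{m,A}(-\delta_m)$ together with the multiplicativity $S_{m,A}\otimes S_{m,A'}=S_{m,A\otimes A'}$ of the symmetric descent, a direct computation gives
$$
N_{p+1,B}\otimes N_{p+1,L}^{-1}\otimes N_{p+1,\omega_C}=N_{p+1,\omega_C\otimes B\otimes L^{-1}}.
$$
Combining the two identities, the right-hand side of the Serre duality becomes
$$
H^{p}\big(C_{p+1}, \wedge^{r(B)-p-1}M_{p+1,B}\otimes N_{p+1,\omega_C\otimes B\otimes L^{-1}}\big),
$$
which is exactly of the form treated in Theorem~\ref{thm:01}.

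Finally I would apply Theorem~\ref{thm:01} with $m=p$, $n=r(B)-p-1$, and the line bundle $\omega_C\otimes B\otimes L^{-1}$ in place of $L$. The hypothesis $m\leq p$ holds trivially, and the numerical condition $h^0\leq p+n-m$ reads $h^0(C,\omega_C\otimes B\otimes L^{-1})\leq r(B)-p-1$, which by Serre duality on $C$ is precisely the standing assumption $h^1(C,L\otimes B^{-1})\leq r(B)-p-1$ (this also forces $r(B)-p-1\geq 0$, so $n\geq 0$). Thus the displayed $H^{p}$ vanishes and the corollary follows. The computations here are routine bookkeeping; the only points requiring care are tracking the $-\delta_{p+1}$ contributions in the determinant identity and checking that the wedge exponent $r(B)-p-1$ coming from $\rank M_{p+1,B}$ matches the difference $p+n-m$ in Theorem~\ref{thm:01}. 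The genuine content has already been absorbed into Theorem~\ref{thm:01}, whose role this corollary simply repackages so that its hypothesis becomes the cohomological bound on $h^1(C,L\otimes B^{-1})$.
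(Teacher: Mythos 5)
Your argument is correct and is essentially identical to the paper's proof: both dualize on $C_{p+1}$ using $\omega_{C_{p+1}}=N_{p+1,\omega_C}$, rewrite $M_{p+1,B}^{\vee}$ via $\wedge^{r(B)-p-1}M_{p+1,B}\otimes N_{p+1,B}$, and invoke Theorem~\ref{thm:01} with $m=p$, $n=r(B)-p-1$ after converting the hypothesis to $h^0(C,\omega_C\otimes B\otimes L^{-1})\leq r(B)-p-1$. You have merely written out the determinant-line-bundle bookkeeping that the paper leaves implicit.
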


\begin{proof}
Recall that the kernel bundle $M_{p+1,B}$ is defined in the short exact sequence  (\ref{eq:sesforM_{p+1,B}}). 
Note that $\rank M_{p+1, B} = r(B)-p$ and $\det M_{p+1, B} = N_{p+1, B}^{-1}$. 
By Serre duality, it is enough to prove 
$$
H^p(C_{p+1}, \wedge^{r(B)-p-1} M_{p+1, B} \otimes N_{p+1, \omega_C \otimes B \otimes L^{-1}}) = 0.
$$
But the assumption $h^1(C, L \otimes B^{-1}) \leq r(B)-p-1$ implies
$$
h^0(C, \omega_C \otimes B \otimes L^{-1}) = h^1(C, L \otimes B^{-1}) \leq r(B)-p-1 = p+(r(B)-p-1)-p, 
$$
so the assertion follows from Theorem \ref{thm:01}.
\end{proof}

\begin{remark}\label{rem:h^1<->degL} 
The upper bound condition imposed on the specialty of $L\otimes B^{-1}$ in Corollary \ref{cor:h^1(C,L-B)<=r(B)-p-1} and Theorem \ref{main:03} can be transferred to one on the degree of  $L$, which turns out to be convenient in applications. Indeed,  by Riemann--Roch,
$$
h^1(C, L \otimes B^{-1}) = g-1-\deg L +\deg B + h^0(C, L \otimes B^{-1}),
$$
and
$$
r(B)-p-1 = \deg B - g+1 + h^1(C, B) -p-2.
$$
Therefore the inequality $h^1(C, L \otimes B^{-1}) \leq r(B)-p-1$ is equivalent to the inequality 
\begin{equation}\label{eq:degL}
\deg L \geq 2g + p + h^0(C, L \otimes B^{-1}) - h^1(C, B).
\end{equation}
\end{remark}

\begin{proof}[Proof of Theorem \ref{main:03}] 
We first show that $L$ is $p$-very ample. By Remark \ref{rem:h^1<->degL}, the assumption (\ref{eq:h^1(L-B)intro}) is equivalent to (\ref{eq:degL}). If $B$ is nonspecial, then it is clear that  $L$ is $p$-very ample. If $B$ is special, we prove by contradiction by assuming $L$ is not $p$-very ample. Thus there exists an effective divisor $\xi$ of degree $p+1$ such that $L(-\xi)$ is special. So we may write $B=\omega_C(-\alpha)$ and $L=\omega_C(\xi-\beta)$ for some effective divisors $\alpha$ and $\beta$ on $C$. We then have
$$
2g-1+p-\deg \beta = \deg L \geq 2g+p + h^0(C, \sO_C(\xi-\beta+\alpha)) - h^0(C, \sO_C(\alpha)) \geq 2g+p- \deg \beta,
$$
which is a contradiction. Thus in particular $L$ is globally generated. Now, the theorem is an immediate consequence of Lemma \ref{lem:H^1(MotimesN)=0=>K_{p,1}=0} and Corollary \ref{cor:h^1(C,L-B)<=r(B)-p-1}.
 \end{proof}

In the rest part of this section, we present several applications of Theorem \ref{main:03}. 

\begin{corollary}\label{cor:h^1(C,L-B)<=1}
Let $B$ be a $p$-very ample line bundle and $L$ a line bundle on $C$. Assume that $h^1(C, L \otimes B^{-1}) \leq 1$. Then 
$H^1(C_{p+1}, M_{p+1,B}\otimes N_{p+1,L}) \neq 0$ if and only if $h^1(C, L \otimes B^{-1}) =1$ and one of the following holds:
\begin{enumerate}
\item[(1)] $p=0$ and $|B|$ is a base point free pencil.
\item[(2)] $p=1$, $C \subseteq \nP H^0(C, B)= \nP^2$ is a plane curve of degree $\geq 3$, and $H^0(C, L \otimes B^{-1})\neq 0$. 
\item[(3)] $p\geq 2$, $g=1$, $\deg B = p+2$, and $L=B$.
\end{enumerate}
\end{corollary}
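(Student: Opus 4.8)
The plan is to begin exactly as in the proof of Corollary \ref{cor:h^1(C,L-B)<=r(B)-p-1}: by Serre duality on $C_{p+1}$ the group $H^1(C_{p+1}, M_{p+1,B}\otimes N_{p+1,L})$ is dual to
$$
H^p\big(C_{p+1}, \wedge^{r(B)-p-1}M_{p+1,B}\otimes N_{p+1,L'}\big),\qquad L':=\omega_C\otimes B\otimes L^{-1},
$$
and $h^0(C,L')=h^1(C,L\otimes B^{-1})\leq 1$. First I would pin down $r(B)$. Since $B$ is $p$-very ample, $r(B)\geq p$; if $r(B)=p$ then $M_{p+1,B}=0$ and the group vanishes, while if $r(B)\geq p+1$ I would apply Theorem \ref{thm:01} with $m=p$ and $n=r(B)-p-1$, which forces the dual group to vanish as soon as $h^0(C,L')\leq r(B)-p-1$. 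Feeding in $h^0(C,L')=h^1(C,L\otimes B^{-1})\leq 1$, this shows nonvanishing can occur only when $h^1(C,L\otimes B^{-1})=1$ and $r(B)=p+1$ exactly.

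In the boundary case $r(B)=p+1$ one has $\wedge^{r(B)-p-1}M_{p+1,B}=\sO_{C_{p+1}}$, so the dual group is just $H^p(C_{p+1},N_{p+1,L'})$, which Lemma \ref{lem:H^i(N)H^i(S)} evaluates as $H^0(C,L')\otimes S^pH^1(C,L')$. As $h^0(C,L')=1$, this is nonzero iff $S^pH^1(C,L')\neq 0$, i.e. iff $p=0$ or $H^1(C,L')\neq 0$, and $\dim H^1(C,L')=h^0(C,L\otimes B^{-1})$ by Serre duality. Thus the cohomology reduces the problem to a geometric classification: describe $p$-very ample $B$ with $r(B)=p+1$ (equivalently $h^0(C,B)=p+2$), and record when moreover $h^0(C,L\otimes B^{-1})\geq 1$. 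For $p=0$ this is precisely that $|B|$ is a base-point-free pencil, giving (1); for $p=1$ very ampleness embeds $C$ as a plane curve in $\nP H^0(C,B)=\nP^2$, and the extra condition $h^0(C,L\otimes B^{-1})\neq 0$ gives (2), with degrees $\leq 2$ excluded automatically since a rational curve carries no $A=L\otimes B^{-1}$ with both $h^0(A)\geq 1$ and $h^1(A)=1$.

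The crux, and the step I expect to be the main obstacle, is $p\geq 2$: I must show that a $p$-very ample $B$ with $h^0(C,B)=p+2$ together with $h^0(C,L\otimes B^{-1})\geq 1$ forces $g=1$, $\deg B=p+2$, $L=B$. My plan is to produce, for $g\geq 2$, an effective divisor violating $p$-very ampleness. Choosing any $\eta$ of degree $p-1$, the $(p-2)$-very ampleness of $B$ gives $h^0(C,B(-\eta))=3$, and $(p-1)$-very ampleness shows $|B(-\eta)|$ is base-point-free, hence defines a morphism $\phi\colon C\to\nP^2$ from a complete net of degree $e=\deg B-p+1$. I would then argue that $\phi$ cannot be a closed embedding: an embedding would realize $C$ as a smooth plane curve with $g=\binom{e-1}{2}$, and a short Riemann--Roch computation (using $\omega_C=\sO_C(e-3)$ on a smooth plane curve, together with $B(-\eta)=\sO_C(1)$) contradicts $h^0(C,B)=p+2$. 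Since $\phi$ is therefore not an embedding, some length-two subscheme $\zeta$ is contracted, whence $h^0(C,B(-\eta-\zeta))\geq 2$ with $\deg(\eta+\zeta)=p+1$, contradicting $p$-very ampleness. This rules out $g\geq 2$; the case $g=0$ is excluded exactly as for $p=1$, leaving $g=1$, where $B$ is nonspecial of degree $p+2$ and the conditions $h^1(A)=1$, $h^0(A)\geq 1$ force $A=L\otimes B^{-1}=\sO_C$, i.e. $L=B$. Finally I would verify the converse — that each of (1), (2), (3) produces a nonzero group — by substituting back into $H^0(C,L')\otimes S^pH^1(C,L')$, which is immediate from the evaluations above.
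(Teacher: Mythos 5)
Your proposal is correct, and its cohomological core is the same as the paper's: reduce via Serre duality and Theorem \ref{thm:01} (equivalently, Corollary \ref{cor:h^1(C,L-B)<=r(B)-p-1}) to the case $r(B)=p+1$ and $h^1(C,L\otimes B^{-1})=1$, then observe that the relevant bundle becomes a line bundle whose $H^1$ (or its Serre dual $H^0(C,L')\otimes S^pH^1(C,L')$, $L'=\omega_C\otimes B\otimes L^{-1}$ --- the two computations are dual to each other via Lemma \ref{lem:H^i(N)H^i(S)}) is nonzero iff $p=0$ or $H^0(C,L\otimes B^{-1})\neq 0$. The one genuine divergence is the classification of $p$-very ample $B$ with $h^0(C,B)=p+2$: the paper simply cites \cite[Proposition 3.4]{DNP} (after disposing of $C=\nP^1$ by hand), whereas you prove this classification from scratch. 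Your argument for the hard case $p\geq 2$, $g\geq 2$ --- pass to the base-point-free net $|B(-\eta)|$ with $\deg\eta=p-1$, rule out the embedding case by the Riemann--Roch count $h^1(B)=h^0(\sO_C(e-4)(-\eta))<\binom{e-2}{2}$ on a smooth plane curve of degree $e\geq 4$, and otherwise extract a non-separated length-two subscheme $\zeta$ with $\deg(\eta+\zeta)=p+1$ contradicting $p$-very ampleness --- is sound and makes the corollary self-contained at the cost of about a paragraph; the paper's route buys brevity by outsourcing exactly this step. Both arrive at the same trichotomy, including the exclusion of $L=B(-x)$ in the elliptic case (where $h^0(L\otimes B^{-1})=0$ kills $S^pH^0$, respectively $S^pH^1(C,L')$).
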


\begin{proof}
Suppose that $H^1(C_{p+1}, M_{p+1,B}\otimes N_{p+1,L}) \neq 0$. By Corollary \ref{cor:h^1(C,L-B)<=r(B)-p-1}, 
$$
r(B)-p \leq h^1(C, L \otimes B^{-1}) \leq 1,
$$
so $r(B) \leq p+1$. If $r(B) \leq p$, then $\rank M_{p+1, B}=0$ so that $H^1(C_{p+1}, M_{p+1,B}\otimes N_{p+1,L}) =0$. Thus $r(B) =p+1$, and $h^1(C, L \otimes B^{-1}) =1$. 

\medskip

Let us consider first the case when $C=\nP^1$. In this case, $B=\sO_{\nP^1}(p+1)$ and $L=\sO_{\nP^1}(p-1)$. Notice that if $p \geq 1$, then $L$ is globally generated and 
$$
H^1(C_{p+1}, M_{p+1,B}\otimes N_{p+1,L}) = K_{p,1}(\nP^1, B; L)=H^1(\nP^1, \wedge^{p+1} M_L \otimes B)=0.
$$
Hence the only possibility is that $p=0$ and $B=\sO_{\nP^1}(1)$ defines a base point free pencil. Next we consider arbitrary curves in the sequel by assuming $g\geq 1$. By  \cite[Proposition 3.4]{DNP}, we must have (1) $|B|$ is a base point free pencil, (2) $C \subseteq \nP H^0(C, B)= \nP^2$ is a plane curve of degree $\geq 3$, or (3) $g=1$, $\deg B = p+2$ for $p \geq 2$. In the last case, the condition $h^1(C, L \otimes B^{-1}) =1$ forces $L=B$ or $B(-x)$ for any $x \in C$. We will exclude the case of $L=B(-x)$. 
\medskip

Finally, it only remains to show that $(i)$ $H^1(C_{p+1}, M_{p+1,B}\otimes N_{p+1,L}) \neq 0$ really occurs under the circumstances (1), (2), (3) and $(ii)$ $H^1(C_{p+1}, M_{p+1,B}\otimes N_{p+1,L}) = 0$ when $g=1, \deg B=p+2, L=B(-x)$ for $x \in C$.
This comes from the observation that $\rank M_{p+1, B} = 1$ in all these cases. By Lemma \ref{lem:H^i(N)H^i(S)},
$$
H^1(C_{p+1}, M_{p+1,B}\otimes N_{p+1,L}) = H^1(C_{p+1}, S_{p+1, L \otimes B^{-1}}) = S^p H^0(C, L \otimes B^{-1}) \otimes H^1(C, L \otimes B^{-1}).
$$
Thus $H^1(C_{p+1}, M_{p+1,B}\otimes N_{p+1,L})  \neq 0$ if and only if either $p=0$ or $H^0(C, L \otimes B^{-1})\neq 0$.
\end{proof}

\begin{corollary}[{Green's $(2g+1+p)$-theorem}]
If $L$ is a line bundle on $C$ and $\deg L\geq 2g+p+1$, then $K_{p,2}(C;L)=0$. 
\end{corollary}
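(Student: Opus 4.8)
The plan is to reduce the statement to the main vanishing theorem (Theorem \ref{main:03}) by taking $B = L$. The starting point is the elementary identity $K_{p,2}(C; L) = K_{p,1}(C, L; L)$: writing out the Koszul-type complex computing $K_{p,q}(C, B; L)$ with $(B, q) = (\sO_C, 2)$ and with $(B, q) = (L, 1)$, one sees that in both cases the three terms are $\wedge^{p+1} H^0(C,L) \otimes H^0(C,L)$, $\wedge^p H^0(C,L) \otimes H^0(C,L^2)$, and $\wedge^{p-1} H^0(C,L) \otimes H^0(C,L^3)$, with the same differentials. Hence it suffices to prove $K_{p,1}(C, L; L) = 0$, and for this I would invoke Theorem \ref{main:03} with $B = L$.

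To apply Theorem \ref{main:03} I must verify its two hypotheses. First, $L$ must be $p$-very ample. For any effective divisor $\xi$ of degree $p+1$ we have $\deg L(-\xi) = \deg L - (p+1) \geq 2g$, so $h^1(C, L(-\xi)) = 0$ and the restriction map $H^0(C, L) \to H^0(\xi, L|_{\xi})$ is surjective; thus $L$ is $p$-very ample. Second, the numerical hypothesis $h^1(C, L \otimes B^{-1}) \leq r(B) - p - 1$ becomes, for $B = L$,
$$
h^1(C, \sO_C) = g \leq r(L) - p - 1.
$$
Since $\deg L \geq 2g + p + 1 > 2g - 1$, the bundle $L$ is nonspecial, so by Riemann--Roch $r(L) = h^0(C, L) - 1 = \deg L - g \geq g + p + 1$, which gives exactly $r(L) - p - 1 \geq g$. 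Both hypotheses therefore hold.

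With these verified, Theorem \ref{main:03} yields $K_{p,1}(C, L; L) = 0$, and combined with the identity above this gives $K_{p,2}(C; L) = 0$, as claimed. There is essentially no obstacle here: the only point requiring a moment's thought is recognizing the reduction $K_{p,2}(C; L) = K_{p,1}(C, L; L)$, after which the degree bound $\deg L \geq 2g + p + 1$ is precisely calibrated to supply both the $p$-very ampleness of $L$ and the inequality $g \leq r(L) - p - 1$ feeding into Theorem \ref{main:03}.
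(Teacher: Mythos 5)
Your proposal is correct and follows essentially the same route as the paper: reduce via the identity $K_{p,2}(C;L)=K_{p,1}(C,L;L)$, check that $L$ is $p$-very ample and that $h^1(C,L\otimes L^{-1})=g\leq r(L)-p-1$ using Riemann--Roch, and apply Theorem \ref{main:03}. Your write-up merely spells out the verifications that the paper leaves implicit.
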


\begin{proof}
Since $\deg(L) \geq 2g+p+1$, it follows from Riemann--Roch theorem that $L$ is $p$-very ample and $r(L) \geq g+p+1$. Note that $K_{p,2}(C;L)=K_{p,1}(C,L;L)$. Since $h^1(C, L\otimes L^{-1})= g \leq r(L)-p-1$, the assertion follows from Theorem \ref{main:03}.
\end{proof}

The following is a well-known generalization of Castelnuovo's base point free pencil trick. 

\begin{corollary}[{Base point free pencil trick}]\label{cor:bpftrick}
Let $B$ be an effective divisor and $L$ a nonspecial globally generated line bundle. Then
$$
H^1(C, \wedge^{w+1} M_L\otimes B\otimes L)=0 ~~\text{ for $w\leq \deg L-2g$}.
$$
unless $B=\sO_C$, in which case $H^1(C, \wedge^w M_L \otimes L) = 0$ for $w \leq \deg L - 2g$.
\end{corollary}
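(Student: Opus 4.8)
The plan is to reduce the desired vanishing to the $p=m=0$ case of Theorem \ref{thm:01} via exterior-power duality and Serre duality. Since $L$ is globally generated, the kernel bundle $M_L=M_{1,L}$ on $C=C_1$ sits in the evaluation sequence $0\to M_L\to H^0(C,L)\otimes\sO_C\to L\to 0$, so $M_L$ has rank $r:=r(L)$ and $\det M_L=L^{-1}$. The elementary identity $\wedge^{s}M_L\cong\wedge^{r-s}M_L^\vee\otimes\det M_L=\wedge^{r-s}M_L^\vee\otimes L^{-1}$ then gives, for any line bundle $B$ and any $s$,
$$
\wedge^{s}M_L\otimes B\otimes L\;\cong\;\wedge^{r-s}M_L^\vee\otimes B.
$$
If $s>r$ the bundle $\wedge^{s}M_L$ vanishes and there is nothing to prove, so I may assume $0\leq r-s$. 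Serre duality on $C$ now turns the target into
$$
H^1\!\big(C,\wedge^{s}M_L\otimes B\otimes L\big)^\vee\;\cong\;H^0\!\big(C,\wedge^{r-s}M_L\otimes\omega_C\otimes B^{-1}\big).
$$

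Next I would invoke Theorem \ref{thm:01} with $p=m=0$, letting its globally generated bundle be $L$ itself and its line bundle be $A:=\omega_C\otimes B^{-1}$ (recall $N_{1,A}=A$ on $C_1=C$, and that Theorem \ref{thm:01} imposes no positivity on $A$). In this case the theorem asserts $H^0(C,\wedge^{n}M_L\otimes A)=0$ whenever $h^0(C,A)\leq n$. Thus, taking $n:=r-s$, it suffices to verify the single numerical inequality
$$
h^0(C,\omega_C\otimes B^{-1})\;=\;h^1(C,B)\;\leq\;r-s,
$$
and the two clauses of the corollary correspond exactly to the two admissible choices of $s$.

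For $B\neq\sO_C$ effective I take $s=w+1$, so that $r-s=r-w-1=\deg L-g-w-1$ (using $r(L)=\deg L-g$, as $L$ is nonspecial). Because $B$ is effective and nontrivial, $H^0(C,\omega_C\otimes B^{-1})$ is a proper subspace of $H^0(C,\omega_C)$, whence $h^1(C,B)\leq g-1$; since $w\leq\deg L-2g$ forces $r-w-1\geq g-1$, the inequality holds and Theorem \ref{thm:01} gives the vanishing. For $B=\sO_C$ I instead take $s=w$, so $r-s=\deg L-g-w$, and the condition becomes $h^1(C,\sO_C)=g\leq\deg L-g-w$, again precisely $w\leq\deg L-2g$. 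In either case the Serre-dual $H^0$ vanishes, which yields the claim.

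The only delicate point is the bound $h^1(C,B)\leq g-1$ for nontrivial effective $B$ (the degenerate case $g=0$ being vacuous): this is what separates the two clauses and accounts for the drop from $\wedge^{w+1}$ to $\wedge^{w}$ when $B=\sO_C$, and it is exactly the place where the effectivity hypothesis on $B$ enters. The remaining ingredients — the exterior-power duality, the computation $\det M_L=L^{-1}$, and the translation $r(L)=\deg L-g$ — are routine, so I expect no serious obstacle once the reduction to Theorem \ref{thm:01} is in place.
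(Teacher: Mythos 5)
Your proof is correct, but it takes a genuinely different route from the paper's. The paper observes that $H^1(C,\wedge^{w+1}M_L\otimes B\otimes L)=K_{w,1}(C,B\otimes L;L)$ (using $H^1(C,B\otimes L)=0$), notes that $L\otimes B$ is $w$-very ample since $\deg(L\otimes B)\geq 2g+w$, and then applies Theorem \ref{main:03} together with Remark \ref{rem:h^1<->degL} to the pair $(L\otimes B, L)$; for $B=\sO_C$ it falls back on Green's $(2g+1+p)$-theorem via $K_{w-1,2}(C;L)$. That argument therefore passes through the symmetric product $C_{w+1}$ and the kernel bundle $M_{w+1,L\otimes B}$. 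You instead stay on the curve itself: the exterior-power duality $\wedge^{s}M_L\cong\wedge^{r-s}M_L^\vee\otimes L^{-1}$ plus Serre duality converts the target into $H^0(C,\wedge^{r-s}M_L\otimes\omega_C\otimes B^{-1})$, and only the $p=m=0$ instance of Theorem \ref{thm:01} is needed, with the single hypothesis $h^1(C,B)\leq r-s$. Both arguments ultimately bottom out at Theorem \ref{thm:01}, but yours invokes it in its most elementary form and makes visible exactly where effectivity of $B$ enters (the bound $h^1(C,B)\leq g-1$ for $B\neq\sO_C$, versus $h^1(C,\sO_C)=g$, which is precisely what forces the drop from $\wedge^{w+1}$ to $\wedge^{w}$), whereas the paper's version is shorter given that Theorem \ref{main:03} is already established and treats both clauses uniformly as instances of the weight-one vanishing machinery. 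Your edge-case handling ($s>r$, $g=0$, and the proper-subspace argument for $h^1(C,B)\leq g-1$ via base-point-freeness of $|\omega_C|$ for $g\geq 1$) is sound.
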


\begin{proof} 
Note that $H^1(C, B \otimes L)=0$. 
We may assume that $w \geq 0$. As $\deg (L \otimes B) \geq 2g+w$, we see that $L \otimes B$ is $w$-very ample. If $B \neq \sO_C$, then 
$$
\deg L \geq 2g+w+\underbrace{h^0(C, L \otimes (L \otimes B)^{-1})}_{=0} - \underbrace{h^1(C, L \otimes B)}_{=0}.
$$
By Theorem \ref{main:03} and Remark \ref{rem:h^1<->degL}, we have $H^1(C, \wedge^{w+1} M_L\otimes B\otimes L) = K_{w,1}(C, B \otimes L; L) = 0$.
If $B = \sO_C$, then Green's $(2g+1+p)$-theorem yields $H^1(C, \wedge^w M_L \otimes L) = K_{w-1,2}(C; L) = 0$ when $\deg L \geq 2g+(w-1)+1$. 
\end{proof}

\section{Effective nonvanishing for weight-one syzygies of algebraic curves}\label{sec:nonvanishing}

\noindent  In this section, we discuss the nonvanishing of the Koszul cohomology $K_{p,1}(C, B; L)$. Here as in previous sections,  let $B$ and $L$ are line bundles on a smooth projective curve $C$ of genus $g \geq 0$. We are trying to relate positivity of line bundles with nonvanishing of the Koszul cohomology.

\medskip

We start by recalling the following very useful nonvanishing theorem for $K_{p,1}(C;L)$ proved by Green--Lazarsfeld. The original theorem holds for arbitrary smooth projective varieties, but we only state the version here for curves.

\begin{theorem}[{\cite[Appendix]{Green:KoszulI}}]\label{thm:GLnonvan}
If $L=M_1 \otimes M_2$ for some line bundles $M_1, M_2$ on $C$ such that $r_1:=r(M_1) \geq 1, r_2:=r(M_2) \geq 1$, then $K_{p, 1}(C; L) \neq 0$ for $1 \leq p \leq r_1+r_2-1$.
\end{theorem}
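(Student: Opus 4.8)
The plan is to produce explicit nonzero weight-one Koszul cocycles out of the factorization $L = M_1 \otimes M_2$, using the determinantal relations among the product sections. Fix bases $f_0, \ldots, f_{r_1} \in H^0(C, M_1)$ and $g_0, \ldots, g_{r_2} \in H^0(C, M_2)$, write $V := H^0(C, L)$, and set $\ell_{ij} := f_i g_j \in V$, viewed as linear forms on $\nP V$. The structural input I would isolate first is that the $(r_1+1)\times(r_2+1)$ matrix $\Phi := (\ell_{ij})$ is $1$-generic: every generalized entry $\big(\sum_i a_i f_i\big)\big(\sum_j b_j g_j\big)$ is a product of two nonzero sections, hence nonzero since $C$ is integral. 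Moreover $\Phi$ has rank $\leq 1$ along $C$, so each $2\times 2$ minor $\ell_{ij}\ell_{kl} - \ell_{il}\ell_{kj}$ dies in $H^0(C, L^2)$; these minors are precisely the lowest weight-one syzygies, and $C$ lies in the determinantal (Segre) variety cut out by them.

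With this in place, I would invoke the Eagon--Northcott complex of $\Phi$. For a $1$-generic matrix this complex is a subcomplex of, and contributes minimal generators to, the linear strand of the minimal free resolution of the ideal of $2\times 2$ minors over $\Sym V$ (Eisenbud's theorem on $1$-generic matrices). The linear strand of the resolution of the $(r_1+1)\times(r_2+1)$ determinantal variety has length exactly $r_1 + r_2 - 1$, so the Eagon--Northcott complex furnishes a nonzero minimal linear syzygy in each homological degree $p$ with $1 \le p \le r_1 + r_2 - 1$. Since every minor lies in the degree-two part of the ideal $I_{C}$ of $C$ in $\nP V$, these syzygies are syzygies of $I_C$ and therefore give classes in $K_{p,1}(C; L)$ for the whole range.

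The only real obstacle is to certify that these classes are nonzero in $K_{p,1}(C;L)$, i.e. that the Eagon--Northcott cocycle $\gamma_p \in \wedge^p V \otimes V$ is not a Koszul coboundary. The verification $d_p \gamma_p = 0$ is a routine sign computation powered by the relations $\ell_{ij}\ell_{kl} = \ell_{il}\ell_{kj}$; the non-coboundary property is where the work lies, and I would extract it from the minimality built into Eisenbud's theorem. Concretely, one detects $\gamma_p$ by pairing against a functional assembled from the Plücker data $\wedge^{r_1+1}H^0(C,M_1)\otimes\wedge^{r_2+1}H^0(C,M_2) \neq 0$; this pairing annihilates the image of $d_{p+1}$ and is nonzero on $\gamma_p$ precisely because the top exterior powers of $H^0(M_1)$ and $H^0(M_2)$ do not vanish. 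Crucially, both the $1$-genericity and this pairing are insensitive to any linear dependence among the products $\ell_{ij}$ (equivalently, to a degenerate image of $C$ in the Segre), which is why the argument survives for arbitrary special $M_1, M_2$ and delivers nonvanishing on the sharp range $1 \le p \le r_1 + r_2 - 1$.
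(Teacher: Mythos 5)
First, a point of reference: the paper does not prove this statement; it is quoted from the appendix of \cite{Green:KoszulI} (written with Lazarsfeld), where the argument constructs an explicit cocycle in $\wedge^{r_1+r_2-1}W\otimes H^0(C,L)$ supported on the subspace $W\subseteq H^0(C,L)$ of dimension $r_1+r_2+1$ spanned by the ``cross'' $\{f_ig_0\}\cup\{f_0g_j\}$, and checks directly both the cocycle condition (via the relations $(f_ig_0)(f_0g_j)=(f_0g_0)(f_ig_j)$) and the fact that the class is not a coboundary. Your opening moves --- the rank-one matrix $\Phi=(f_ig_j)$, its $1$-genericity, and the observation that its $2\times 2$ minors are quadrics through $C$ --- are the right structural input, and your implicit reduction is sound: since $K_{p,1}(C;L)$ depends only on the degree-two part of the ideal, a nonzero linear $p$-th syzygy of the ideal generated by those minors inside $\Sym H^0(C,L)$ injects into $K_{p,1}(C;L)$.

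The gap is that the decisive step --- producing the cocycle at $p=r_1+r_2-1$ and proving it is not a Koszul coboundary --- is not carried out, and the results you invoke do not supply it. (i) The linear strand of the resolution of the generic $(r_1+1)\times(r_2+1)$ rank-one determinantal ideal has length $r_1r_2$ (the codimension of the Segre), not $r_1+r_2-1$; the two agree only when $\min(r_1,r_2)=1$, which is also the only case where ``the Eagon--Northcott complex of $\Phi$'' and Eisenbud's theorem on $1$-generic matrices (a statement about maximal minors) literally apply. (ii) More seriously, that resolution and its minimality live over $\Sym\big(H^0(C,M_1)\otimes H^0(C,M_2)\big)$; pushing syzygies forward along the multiplication map $\mu\colon H^0(C,M_1)\otimes H^0(C,M_2)\to H^0(C,L)$ into $\Sym H^0(C,L)$ is precisely where classes can become coboundaries --- $\mu$ may have a large kernel (e.g.\ when $M_1=M_2$), the space of coboundaries $\im\big(\wedge^{p+1}H^0(C,L)\big)$ changes, and indeed most of the $r_1r_2$ generic syzygies do die, the theorem asserting survival only up to $p=r_1+r_2-1$. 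So minimality is emphatically \emph{not} ``insensitive to linear dependence among the $\ell_{ij}$''; controlling that dependence (for instance via the lemma that a $1$-generic pairing has image of dimension at least $r_1+r_2+1$) is the heart of the matter. (iii) The detecting functional you propose to assemble from $\wedge^{r_1+1}H^0(C,M_1)\otimes\wedge^{r_2+1}H^0(C,M_2)$ is never actually defined on $\wedge^{p}H^0(C,L)\otimes H^0(C,L)$, and the existence of a functional vanishing on $\im(d_{p+1})$ and nonzero on $\gamma_p$ is equivalent to the statement being proved. As written, the proposal is a plausible program rather than a proof; it needs the explicit Green--Lazarsfeld cocycle (or an equivalent construction) together with a genuine non-coboundary argument.
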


Together with Theorem \ref{main:03}, we obtain the following.

\begin{corollary}\label{cor:2g+p}
Assume $g \geq 2$.
Suppose that $\omega_C$ is $p$-very ample and $\deg L \geq 2g+p$. Then
$$
K_{p,1}(C, \omega_C; L) \neq 0~ \text{ if and only if } ~h^1(C, L \otimes \omega_C^{-1}) \geq g-p-1.
$$
Furthermore, we have the following:
\begin{enumerate}
	\item If $\omega_C$ is not $(p+1)$-very ample, then $K_{p+1, 1}(C, \omega_C; L) \neq 0$ except for the case when $g=2$, $p=0$, and $\deg L = 4$, in which case, $K_{1,1}(C,\omega_C;L)=0$. 
	\item If $\deg L =2g+p$, the condition $h^1(C, L \otimes \omega_C^{-1}) \geq g-p-1$ is equivalent to $L \otimes \omega_C^{-1}$ computes the gonality $\gon(C)=p+2$.
\end{enumerate}

\end{corollary}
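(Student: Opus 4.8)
The plan is to route everything through Green's duality theorem $K_{i,1}(C;L)\cong K_{r(L)-i-1,1}(C,\omega_C;L)^{\vee}$, so that statements about $K_{p,1}(C,\omega_C;L)$ become statements about $K_{r(L)-p-1,1}(C;L)$, and then to play the two available engines against each other: Theorem \ref{main:03} supplies vanishing and Theorem \ref{thm:GLnonvan} supplies nonvanishing. Throughout I record $r(\omega_C)=g-1$ and, since $\deg L\geq 2g+p\geq 2g-1$ makes $L$ nonspecial, $r(L)=\deg L-g$.

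For the equivalence, the \emph{only if} direction is immediate from Theorem \ref{main:03} with $B=\omega_C$: its hypothesis reads $h^1(C,L\otimes\omega_C^{-1})\leq r(\omega_C)-p-1=g-p-2$ and forces $K_{p,1}(C,\omega_C;L)=0$, so the contrapositive gives $K_{p,1}(C,\omega_C;L)\neq 0\Rightarrow h^1(C,L\otimes\omega_C^{-1})\geq g-p-1$. For the \emph{if} direction I would apply Theorem \ref{thm:GLnonvan} to the factorization $L=\omega_C\otimes A$ with $A:=L\otimes\omega_C^{-1}$, targeting the index $q:=r(L)-p-1$. Here $r(\omega_C)=g-1\geq 1$, and Riemann--Roch together with $\deg L\geq 2g+p$ and $h^1(A)\geq g-p-1$ gives $h^0(A)\geq 2$, so $r(A)\geq 1$; the lower bound $q\geq 1$ amounts to $\deg L\geq g+p+2$ and holds because $g\geq 2$. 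The decisive point is the upper bound: substituting $r(A)=\deg L-3g+2+h^1(A)$ turns $q\leq r(\omega_C)+r(A)-1$ into exactly $h^1(A)\geq g-p-1$. Thus the hypothesis is \emph{precisely} the condition that lands the duality index $q$ at the top of the Green--Lazarsfeld range, and I expect this exact matching to be the heart of the proof; that $A$ is forced to be special is what makes it work, in contrast to the softer argument below.

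For part (1), the failure of $(p+1)$-very ampleness together with $p$-very ampleness of $\omega_C$ gives $\gon(C)=p+2$, so a pencil $A_0$ of degree $p+2$ with $r(A_0)\geq 1$ computing the gonality exists. By duality the goal is $K_{q',1}(C;L)\neq 0$ with $q':=r(L)-p-2$, and I would feed the factorization $L=A_0\otimes M_2$, $M_2:=L\otimes A_0^{-1}$, into Theorem \ref{thm:GLnonvan}. Since $\deg M_2=\deg L-p-2\geq 2g-2$, Riemann's inequality gives $r(M_2)\geq \deg M_2-g=q'$; hence as soon as $q'\geq 1$ we automatically obtain $r(M_2)\geq 1$ and the range inequality $q'\leq r(A_0)+r(M_2)-1$, and Theorem \ref{thm:GLnonvan} yields $K_{p+1,1}(C,\omega_C;L)\neq 0$. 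The sole obstruction is thus $q'\geq 1$, i.e.\ $\deg L\geq g+p+3$; given $\deg L\geq 2g+p$ this can fail only when $g=2$, in which case $\gon(C)=2$ forces $p=0$ and $\deg L=2g+p=4$. This is exactly the claimed exception, and there $r(L)=2$, so duality identifies $K_{1,1}(C,\omega_C;L)\cong K_{0,1}(C;L)^{\vee}=0$, disposing of it directly. Pinning this exceptional locus down cleanly is the main bookkeeping subtlety of part (1).

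Finally, part (2) is pure Riemann--Roch. When $\deg L=2g+p$ the bundle $A=L\otimes\omega_C^{-1}$ has degree $p+2$, so $h^0(A)=h^1(A)+(p+3-g)$; hence $h^1(A)\geq g-p-1$ is equivalent to $h^0(A)\geq 2$, i.e.\ $A$ is a pencil of degree $p+2$ and $\gon(C)\leq p+2$. Because $\omega_C$ is $p$-very ample we have $\gon(C)\geq p+2$ by the standard equivalence, so equality holds and $A$ computes $\gon(C)=p+2$; reading the same computation backwards gives the converse. The only care needed is to invoke $\gon(C)\geq p+2$ to upgrade $\gon(C)\leq p+2$ to an equality rather than leaving only an inequality.
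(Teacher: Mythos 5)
Your proposal is correct and follows essentially the same route as the paper: Theorem \ref{main:03} (with $r(\omega_C)-p-1=g-p-2$) for the vanishing direction, duality plus Theorem \ref{thm:GLnonvan} applied to $L=\omega_C\otimes(L\otimes\omega_C^{-1})$ for the converse, the factorization $L=\sO_C(\xi)\otimes L(-\xi)$ through a gonality pencil for part (1) with the same identification of the $g=2$, $p=0$, $\deg L=4$ exception via $K_{0,1}(C;L)^{\vee}=0$, and Riemann--Roch for part (2). The arithmetic matching of the Green--Lazarsfeld range with the condition $h^1(C,L\otimes\omega_C^{-1})\geq g-p-1$ is exactly the paper's computation.
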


\begin{proof}  
If $h^1(C, L \otimes \omega_C^{-1}) \leq g-p-2$, then Theorem \ref{main:03} shows that $K_{p,1}(C, \omega_C; L)=0$. Conversely, suppose that $h^1(C, L \otimes \omega_C^{-1}) \geq g-p-1$. Set $d:=\deg L$. As $h^0(C, L) = d-g+1$, the duality of Koszul cohomology (\cite[Theorem 2.c.1]{Green:KoszulI}) gives
$$
K_{p,1}(C, \omega_C; L) = K_{d-g-p-1, 1}(C; L)^{\vee}.
$$ 
Consider two line bundles $M_1:=\omega_C$ and $M_2:=L \otimes \omega_C^{-1}$. We compute that  $r(M_1)= g-1$ and 
$$r(M_2)=\deg (L \otimes \omega_C^{-1}) -g+h^1(C, L \otimes \omega_C^{-1}).$$
The hypothesis imposed on $\deg L$ and $h^1(C,L\otimes \omega_C^{-1})$ gives $r(M_2)\geq d-p-2g+1\geq 1$. By Green--Lazarsfeld's nonvanishing theorem (Theorem \ref{thm:GLnonvan}), we find $K_{d-p-g-1, 1}(C; L) \neq 0$ and therefore $K_{p,1}(C, \omega_C; L)\neq 0$ as desired.

\medskip

Now, we prove the statement (1) by assuming that $\omega_C$ is not $(p+1)$-very ample. There exists an effective divisor  $\xi$ of degree $p+2$ such that $h^0(C, \sO_C(\xi)) = h^1(C, \omega_C(-\xi))=2$.  As before, the duality of Koszul cohomology (\cite[Theorem 2.c.1]{Green:KoszulI}) gives
$$
K_{p+1,1}(C, \omega_C; L) = K_{d-g-p-2, 1}(C; L)^{\vee}.
$$ 
For $M_1:=\sO_C(\xi)$ and $M_2:=L(-\xi)$, we have $r(M_1)=1$ and $r(M_2) \geq d-p-2-g$. If $d-p-2-g\geq 1$, then Theorem \ref{thm:GLnonvan} shows $K_{d-p-g-2, 1}(C; L) \neq 0$. On the other hand, note that $d-p-2-g \geq g-2$. Thus if $d-p-2-g = 0$, then $g=2$ and $p=0$. In this case, $d=\deg L = 4$.  The duality above then gives $K_{1,1}(C,\omega_C;L)=K_{0,1}(C;L)^{\vee}=0$.

\medskip

Finally, we prove the statement (2) by assuming that $\deg L = 2g+p$. Then $\deg (L \otimes \omega_C^{-1}) = p+2$. By
Riemann--Roch,
$$
h^0(C, L \otimes \omega_C^{-1}) = p+g+3+ h^1(C, L \otimes \omega_C^{-1}).
$$
Thus $h^1(C, L \otimes \omega_C^{-1}) \geq g-p-1$ if and only if $h^0(C, L \otimes \omega_C^{-1}) \geq 2$.
However, as $\gon(C) \geq p+2$, we have $h^0(C, L \otimes \omega_C^{-1}) \leq 2$. Hence $h^0(C, L \otimes \omega_C^{-1}) \geq 2$ is equivalent to that the gonality $\gon(C)=p+2$ and $L\otimes \omega^{-1}_C$ computes the gonality. 
\end{proof}

Using the corollary, we may prove the effective gonality theorem (Theorem \ref{thm:effgon}) right away, but we will derive it from a more general statement (Theorem \ref{main:04}) in the next section. In order to prove Theorem \ref{main:04}, in the rest part of this section, we will prepare  effective nonvanishing results for the Koszul cohomology $K_{p,1}(C, B; L)$ when $B$ is neither $\sO_C$ nor $\omega_C$, which are the situations where Green--Lazarsfeld's theorem says nothing.

\begin{lemma}\label{lem:K_{l-1,1}=>K_{l,1}} 
Let $B$ be line bundle and $L$ a globally generated line bundle on $C$. Assume that there is a point $x \in C$ such that $B$ separates $x$ and $L(-x)$ is globally generated such that
	$$
	H^1(C, L(-x))=0~~\text{ and }~~H^1(C, \wedge^{\ell} M_{L(-x)} \otimes B \otimes L(-x))=0.
	$$ 
Then there is a surjective map 
$$
K_{\ell,1}(C,B;L) \longrightarrow K_{\ell-1,1}(C,B(-x);L(-x)).
$$
In particular, 
$$
K_{\ell-1,1}(C, B(-x); L(-x)) \neq 0 \Longrightarrow K_{\ell, 1}(C, B; L) \neq 0.
$$
\end{lemma}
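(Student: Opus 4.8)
The plan is to factor the desired map as a composition
$$
K_{\ell,1}(C,B;L)\ \xrightarrow{\ \sim\ }\ K_{\ell,1}(C,B(-x);L)\ \twoheadrightarrow\ K_{\ell-1,1}(C,B(-x);L(-x)),
$$
where the first arrow is an isomorphism obtained by peeling $x$ off $B$, and the second is a surjection obtained by peeling $x$ off $L$. Throughout I use the standard identification, valid since $L$ and $L(-x)$ are globally generated,
$$
K_{p,1}(C,A;A')=\coker\big(\wedge^{p+1}H^0(A')\otimes H^0(A)\longrightarrow H^0(C,\wedge^{p}M_{A'}\otimes A\otimes A')\big),
$$
where $M_{A'}=\Ker(H^0(A')\otimes\sO_C\to A')$ and the map is induced by the Koszul surjection $\wedge^{p+1}H^0(A')\otimes\sO_C\to\wedge^{p}M_{A'}\otimes A'$.

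For the first arrow I would tensor the weight-one Koszul complexes computing $K_{\bullet,1}(C,-;L)$ with the short exact sequence $0\to B(-x)\to B\to\nC_x\to 0$, where $\nC_x$ is the skyscraper at $x$. Because $B$ separates $x$ and $L$ is globally generated, $B\otimes L^{q}$ separates $x$ for all $q\geq 0$, so $H^0(B\otimes L^{q})\to (B\otimes L^{q})|_x$ is surjective; hence the associated sequence of Koszul complexes is termwise exact and produces a long exact sequence relating $K_{p,q}(C,B(-x);L)$, $K_{p,q}(C,B;L)$, and $K_{p,q}(C,\nC_x;L)$. The complex computing $K_{\bullet,\bullet}(C,\nC_x;L)$ is precisely $(\wedge^\bullet H^0(L),\iota_{\operatorname{ev}_x})$, the contraction Koszul complex of the nonzero evaluation functional $\operatorname{ev}_x\colon H^0(L)\to L|_x$ ($x$ being no base point of $L$), which is acyclic. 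Thus $K_{p,q}(C,\nC_x;L)=0$ for all $p,q$, and the inclusion $B(-x)\hookrightarrow B$ induces the isomorphism.

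For the second arrow I work with kernel bundles on $C$. The evaluation sequences of $L$ and $L(-x)$ and the snake lemma give $0\to M_{L(-x)}\to M_L\to\sO_C(-x)\to 0$, whose $\ell$-th exterior power is $0\to\wedge^{\ell}M_{L(-x)}\to\wedge^{\ell}M_L\to\wedge^{\ell-1}M_{L(-x)}(-x)\to 0$. Twisting by $B(-x)\otimes L=B\otimes L(-x)$ and using $\sO_C(-x)\otimes B(-x)\otimes L=B(-x)\otimes L(-x)$ yields
$$
0\to \wedge^{\ell}M_{L(-x)}\otimes B\otimes L(-x)\to \wedge^{\ell}M_L\otimes B(-x)\otimes L\xrightarrow{\ \psi\ }\wedge^{\ell-1}M_{L(-x)}\otimes B(-x)\otimes L(-x)\to 0.
$$
By the hypothesis $H^1(C,\wedge^{\ell}M_{L(-x)}\otimes B\otimes L(-x))=0$, the map $H^0(\psi)$ is surjective onto the numerator of $K_{\ell-1,1}(C,B(-x);L(-x))$; here the hypotheses that $L(-x)$ is globally generated and $H^1(C,L(-x))=0$ are what let me set up $M_{L(-x)}$ and identify the cokernels with Koszul cohomology. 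To descend $H^0(\psi)$ to Koszul cohomology, I fit this sequence into a morphism of short exact sequences from the $\ell$-th Koszul sequence of $(B(-x),L)$ (whose $H^0$-cokernel is $K_{\ell,1}(C,B(-x);L)$) to that of $(B(-x),L(-x))$, the vertical maps being $\psi$ together with contraction by $\operatorname{ev}_x$ on $\wedge^\bullet H^0(L)$; its commutativity plus the surjectivity of $H^0(\psi)$ give the surjection.

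The hard part will be the last compatibility: verifying that contraction by the point $x$ intertwines the Koszul differentials of $L$ and $L(-x)$ — the Koszul-bicomplex anticommutativity — with every twist by $\sO_C(-x)$ tracked correctly, so that the morphism of short exact sequences is genuinely commutative. Once this is established, composing the isomorphism with the surjection produces the stated map $K_{\ell,1}(C,B;L)\twoheadrightarrow K_{\ell-1,1}(C,B(-x);L(-x))$, and the nonvanishing implication follows since a surjection onto a nonzero space has nonzero source.
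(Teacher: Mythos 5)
Your first arrow is where the argument breaks. The long exact sequence attached to $0 \to B(-x) \to B \to \nC_x \to 0$ compares $K_{\bullet,\bullet}(C,B(-x);L)$ and $K_{\bullet,\bullet}(C,B;L)$ not with the Koszul cohomology of the full skyscraper module $\bigoplus_{q \in \nZ} (B\otimes L^q)|_x$, but with that of the honest quotient $Q = R(C,B;L)/R(C,B(-x);L)$, whose degree-$q$ piece equals $(B\otimes L^q)|_x$ only when $H^0(C,B\otimes L^q) \to (B\otimes L^q)|_x$ is surjective; already in degree $q=-1$ it is $0$ whenever $H^0(C,B\otimes L^{-1})=0$. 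Consequently the weight-zero strand of $Q$ is a truncation of the contraction complex $(\wedge^{\bullet}H^0(L),\iota_{\operatorname{ev}_x})$, and its homology at the spot $\wedge^{\ell+1}H^0(L)\otimes Q_0$ is $\wedge^{\ell+1}H^0(C,L(-x))\otimes B|_x \neq 0$, not $0$. What your long exact sequence actually gives is
$$
\wedge^{\ell+1}H^0(C,L(-x))\otimes B|_x \xrightarrow{\ \partial\ } K_{\ell,1}(C,B(-x);L) \longrightarrow K_{\ell,1}(C,B;L) \longrightarrow 0,
$$
a surjection \emph{onto} $K_{\ell,1}(C,B;L)$ — the wrong direction for your composition — and $\partial$ is nonzero in general. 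For instance, on an elliptic curve with $\deg B=2$, $\deg L=3$ and $x$ any point, all your standing hypotheses hold, yet $K_{0,1}(C,B;L)=0$ while $K_{0,1}(C,B(-x);L)\cong \nC$: the two groups are simply not isomorphic. With only two surjections out of $K_{\ell,1}(C,B(-x);L)$ (one onto $K_{\ell,1}(C,B;L)$ from this step, one onto $K_{\ell-1,1}(C,B(-x);L(-x))$ from your second step), you get no induced map between the targets unless you additionally show $\ker(\partial\text{-surjection}) \subseteq \ker(\text{second surjection})$, which you do not address.

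Your second arrow, by contrast, is essentially the paper's mechanism (the paper uses the $(\ell+1)$-st exterior power of $0 \to M_{L(-x)} \to M_L \to \sO_C(-x) \to 0$ twisted by $B$ and chases $H^1$'s, where you use the $\ell$-th exterior power twisted by $B\otimes L(-x)$ and work on $H^0$'s; the hypothesis $H^1(C,\wedge^{\ell}M_{L(-x)}\otimes B\otimes L(-x))=0$ plays the same role in both). The paper avoids your first step entirely: it never introduces $K_{\ell,1}(C,B(-x);L)$, but instead identifies $K_{\ell,1}(C,B;L)$ and $K_{\ell-1,1}(C,B(-x);L(-x))$ as the kernels of the maps $H^1(C,\wedge^{\ell+1}M_L\otimes B)\to \wedge^{\ell+1}H^0(L)\otimes H^1(C,B)$ and $H^1(C,\wedge^{\ell}M_{L(-x)}\otimes B(-x))\to \wedge^{\ell}H^0(L(-x))\otimes H^1(C,B)$ respectively — the hypothesis that $B$ separates $x$ enters only to make $H^1(C,B(-x))\to H^1(C,B)$ injective so that the second kernel really is the Koszul group — and then obtains the surjection between these kernels by a single diagram chase. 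You should restructure your proof that way: keep $B$ fixed throughout and peel $x$ only off $L$.
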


\begin{proof}
Let $\pr_1 \colon C_{\ell+1} \times C \to C_{\ell+1}$ and $\pr_2 \colon C_{\ell+1} \times C \to C$ be projection maps. Note that $\mathcal{D}_x:=\pr_1^{-1}(S_{\ell+1, x}) = C_{\ell} \times C$. We have a commutative diagram \\[-25pt]
	
	\begin{footnotesize}
		$$
		\xymatrixcolsep{0.3in}
		\xymatrix{
			0 \ar[r] &  (N_{\ell+1, L(-x)} \boxtimes B)(-D_{\ell+1}) \ar[r]^-{\mathcal{D}_x} \ar[d] & (N_{\ell+1, L} \boxtimes B)(-D_{\ell+1})  \ar[r] \ar[d] & (N_{\ell, L(-x)} \boxtimes B(-x))(-D_{\ell}) \ar[r] \ar[d] & 0\\
			0 \ar[r] & N_{\ell+1, L(-x)} \boxtimes B \ar[r]^-{\mathcal{D}_x} & N_{\ell+1, L} \boxtimes B \ar[r] & N_{\ell, L(-x)} \boxtimes B \ar[r] & 0,
		}
		$$
	\end{footnotesize}
	
	\noindent where the rows are short exact sequences. Since $H^1(C, L(-x))=0$, it follows from Lemma \ref{lem:H^i(N)H^i(S)} that
	$$
	R^1 \pr_{2,*}(N_{\ell+1, L(-x)} \boxtimes B)(-D_{\ell+1})=0~~\text{ and }~~R^1\pr_{2,*}N_{\ell+1, L(-x)} \boxtimes B =0.
	$$
	Applying $\pr_{2,*}$ yields a commutative diagram\\[-20pt]
	
	\begin{footnotesize}
		$$
		\xymatrix{
			0\ar[r]&\wedge^{\ell+1} M_{L(-x)} \otimes B \ar[r] \ar[d] & \wedge^{\ell+1} M_L \otimes B \ar[r] \ar[d] & \wedge^{\ell} M_{L(-x)} \otimes B(-x)\ar[d] \ar[r]&0\\
			0\ar[r]& \wedge^{\ell+1} H^0(C, L(-x)) \otimes B\ar[r] & \wedge^{\ell+1} H^0(C, L)\otimes B \ar[r] & \wedge^{\ell} H^0(C, L(-x))\otimes B \ar[r] &0,
		}
		$$
	\end{footnotesize}
	
	\noindent where the rows are short exact sequences. Taking cohomology, we get a commutative diagram\\[-25pt]
	
	\begin{footnotesize}
		$$
		\xymatrixcolsep{0.25in}
		\xymatrix{
			H^1(C, \wedge^{\ell+1} M_{L(-x)} \otimes B) \ar[r] \ar@{->>}[d]^-{\alpha} & H^1(C, \wedge^{\ell+1} M_L \otimes B) \ar@{->>}[r] \ar[d]^-{\beta} & H^1(C, \wedge^{\ell} M_{L(-x)} \otimes B(-x))\ar[d]^{\gamma} \\
			H^1(C, B) \otimes \wedge^{\ell+1} H^0(C, L(-x)) \ar[r] & H^1(C, B) \otimes \wedge^{\ell+1} H^0(C, L) \ar@{->>}[r] & H^1(C, B) \otimes \wedge^{\ell} H^0(C, L(-x)),
		}
		$$
	\end{footnotesize}
	
	\noindent where the rows are exact sequences. Since $\coker(\alpha)=H^1(C, \wedge^{\ell} M_L(-x) \otimes B \otimes L(-x))=0$ by the given hypothesis, the map $\alpha$ is surjective. The map $\gamma$ factors as\\[-25pt]
	
	\begin{small}
		$$H^1(C, \wedge^{\ell} M_{L(-x)} \otimes B(-x))\longrightarrow H^1(C, B(-x)) \otimes \wedge^{\ell} H^0(C, L(-x))\xrightarrow{~j~} H^1(C, B) \otimes \wedge^{\ell} H^0(C, L(-x)).$$
	\end{small} 
	
	\noindent Since $B$ separates $x$, the map $j$ is injective. Thus the kernel of $\gamma$ is $K_{\ell-1,1}(C,B(-x);L(-x))$. A diagram chasing gives a surjection map $\ker(\beta) \to \ker(\gamma)$, where $\ker(\beta)=K_{\ell, 1}(C, B; L)$.
	\end{proof}

\begin{corollary}\label{cor:K_{0,1}=>K_{l,1}}
Let $B$ be line bundle and $L$ a globally generated line bundle on $C$. Assume that there exist not necessarily distinct points $x_1, \ldots, x_{\ell} \in C$ such that if $\xi_i:=x_1 + \cdots + x_i$ for $1 \leq i \leq \ell$ and $\xi_0 := 0$, then $B$ separates $\xi_{\ell}$ and $L(-\xi_i)$ is globally generated with
	$$
H^1(C, L(-\xi_\ell))=0~~\text{ and }~~H^1(C, \wedge^{\ell+1-i} M_{L(-\xi_i)} \otimes B(-\xi_{i-1}) \otimes L(-\xi_i)) = 0 \text{ for every  }1 \leq i \leq \ell.
	$$
	Then
	$$
	K_{0,1}(C, B(-\xi_{\ell}); L(-\xi_{\ell})) \neq 0 \Longrightarrow K_{\ell, 1}(C, B; L) \neq 0.
	$$
\end{corollary}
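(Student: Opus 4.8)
The plan is to iterate Lemma \ref{lem:K_{l-1,1}=>K_{l,1}}, peeling off one point $x_i$ at a time, so that after $\ell$ applications the nonvanishing of $K_{0,1}(C, B(-\xi_\ell); L(-\xi_\ell))$ propagates all the way up to $K_{\ell,1}(C,B;L)$. Concretely, at the $i$-th step (for $1 \leq i \leq \ell$) I would apply the lemma with $B$ replaced by $B(-\xi_{i-1})$, with $L$ replaced by $L(-\xi_{i-1})$, with the point $x = x_i$, and with the index ``$\ell$'' of the lemma taken to be $\ell+1-i$. Since $B(-\xi_{i-1})(-x_i) = B(-\xi_i)$ and $L(-\xi_{i-1})(-x_i) = L(-\xi_i)$, the conclusion of the lemma at this step reads
$$
K_{\ell-i,1}(C, B(-\xi_i); L(-\xi_i)) \neq 0 \Longrightarrow K_{\ell-i+1, 1}(C, B(-\xi_{i-1}); L(-\xi_{i-1})) \neq 0,
$$
and composing these implications for $i = \ell, \ell-1, \ldots, 1$ yields the desired conclusion $K_{0,1}(C, B(-\xi_\ell); L(-\xi_\ell)) \neq 0 \Rightarrow K_{\ell, 1}(C, B; L) \neq 0$. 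One may equivalently phrase this as a downward induction on the $K$-index.

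To run each step I must verify the hypotheses of Lemma \ref{lem:K_{l-1,1}=>K_{l,1}}. The global generation of $L(-\xi_i)$ is assumed outright in the corollary. The wedge-power vanishing $H^1(C, \wedge^{\ell+1-i} M_{L(-\xi_i)} \otimes B(-\xi_{i-1}) \otimes L(-\xi_i)) = 0$ demanded at step $i$ is, once the lemma's index is set to $\ell+1-i$, precisely the $i$-th vanishing hypothesis of the corollary, so these match on the nose and require no further work.

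The two conditions that need an extra argument --- and which I expect to be the only genuine point to check --- are the separation of $x_i$ by $B(-\xi_{i-1})$ and the vanishing $H^1(C, L(-\xi_i)) = 0$, since the corollary only assumes that $B$ separates the \emph{full} divisor $\xi_\ell$ and that $H^1(C, L(-\xi_\ell)) = 0$. For the separation, I would observe that if $H^0(C,B) \to H^0(\xi_\ell, B|_{\xi_\ell})$ is surjective, then composing with restriction to any subdivisor shows $B$ separates every $\xi_j$, whence $h^0(C, B(-\xi_j)) = h^0(C, B) - j$ for all $0 \leq j \leq \ell$; in particular $h^0(C, B(-\xi_i)) = h^0(C, B(-\xi_{i-1})) - 1$, which is exactly the statement that $B(-\xi_{i-1})$ separates $x_i$. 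For the cohomology vanishing, the skyscraper sequence $0 \to L(-\xi_j) \to L(-\xi_{j-1}) \to L|_{x_j} \to 0$ gives in cohomology a surjection $H^1(C, L(-\xi_j)) \twoheadrightarrow H^1(C, L(-\xi_{j-1}))$, so $H^1(C, L(-\xi_i))$ is a quotient of $H^1(C, L(-\xi_\ell)) = 0$ for every $0 \leq i \leq \ell$. With these two observations the iteration goes through verbatim.
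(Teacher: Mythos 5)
Your proposal is correct and follows exactly the paper's route: the paper's proof is the one-line ``by induction on $\ell$, the corollary follows from Lemma \ref{lem:K_{l-1,1}=>K_{l,1}}''. Your two supplementary verifications --- that separation of $\xi_\ell$ by $B$ implies $B(-\xi_{i-1})$ separates $x_i$, and that $H^1(C,L(-\xi_i))$ is a quotient of $H^1(C,L(-\xi_\ell))=0$ --- are the details the paper leaves implicit, and both are argued correctly.
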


\begin{proof}
	By induction on $\ell$, the corollary follows from Lemma \ref{lem:K_{l-1,1}=>K_{l,1}}
\end{proof}

\begin{lemma}\label{lem:K_{0,1}I}
Let $B$ and $L$ be line bundles on $C$. We have
	$$K_{0,1}(C,B;L)\neq 0$$
	if one the following holds:
	\begin{enumerate}
		\item $H^0(C, B)=0$ and $H^0(C, B\otimes L)\neq 0$.
		\item $|B|$ has a nonempty base locus and $H^1(C, L)=0$.
	\end{enumerate}
\end{lemma}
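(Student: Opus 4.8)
The plan is to realize $K_{0,1}(C,B;L)$ as the cokernel of a multiplication map and to exhibit a section missing from its image. Since $\wedge^{-1}H^0(C,L)=0$, the Koszul complex computing $K_{0,1}$ reduces to
$$
H^0(C,L)\otimes H^0(C,B)\stackrel{\mu}{\longrightarrow}H^0(C,B\otimes L)\longrightarrow 0,
$$
so that $K_{0,1}(C,B;L)=\coker(\mu)$, where $\mu$ is the multiplication map. Hence in both cases it suffices to show that $\mu$ fails to be surjective. Case (1) is immediate: if $H^0(C,B)=0$, then the source of $\mu$ is $0$, so $\coker(\mu)=H^0(C,B\otimes L)\neq 0$ by hypothesis.

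For case (2), I would fix a point $x$ in the base locus of $|B|$; note $H^0(C,B)\neq 0$ since $|B|$ is a nonempty linear system. As every section of $B$ vanishes at $x$, every product $t\cdot s$ with $t\in H^0(C,L)$ and $s\in H^0(C,B)$ vanishes at $x$, whence $\im(\mu)\subseteq H^0(C,(B\otimes L)(-x))$. The goal is therefore reduced to proving the strict inclusion $H^0(C,(B\otimes L)(-x))\subsetneq H^0(C,B\otimes L)$, equivalently that $x$ is not a base point of $B\otimes L$. Taking cohomology of
$$
0\longrightarrow (B\otimes L)(-x)\longrightarrow B\otimes L\longrightarrow (B\otimes L)|_x\longrightarrow 0,
$$
this strictness follows once I show $H^1(C,(B\otimes L)(-x))=0$, for then $H^0(C,B\otimes L)\to (B\otimes L)|_x\cong\nC$ is surjective.

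The crux -- and the step I expect to be the main obstacle -- is the vanishing $H^1(C,(B\otimes L)(-x))=0$, which is where the hypothesis $H^1(C,L)=0$ must be brought in. By Serre duality this is equivalent to $H^0(C,\omega_C\otimes B^{-1}\otimes L^{-1}(x))=0$, and I would prove it by a base-point-free-pencil-style product argument: if $\tau\neq 0$ were a section of $\omega_C\otimes B^{-1}\otimes L^{-1}(x)$, then choosing any nonzero $s\in H^0(C,B)$, which vanishes at $x$ because $x$ lies in the base locus of $|B|$, the product $\tau\cdot s$ would lie in $H^0(C,\omega_C\otimes L^{-1})$, the twist by $x$ carried by $\tau$ being absorbed by the vanishing of $s$ at $x$. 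Since $C$ is an integral curve and $\tau,s\neq 0$, this product is nonzero; but $H^0(C,\omega_C\otimes L^{-1})=H^1(C,L)^{\vee}=0$, a contradiction. This yields $H^1(C,(B\otimes L)(-x))=0$, hence the strict inclusion above, and therefore $\coker(\mu)\neq 0$, completing case (2).
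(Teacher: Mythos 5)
Your proof is correct and follows the same route as the paper: both identify $K_{0,1}(C,B;L)$ with the cokernel of the multiplication map $H^0(C,B)\otimes H^0(C,L)\to H^0(C,B\otimes L)$ and show it fails to be surjective in each case. The paper leaves the verification implicit as ``easily follows''; you supply the details, and your Serre-duality argument that a base point $x$ of $|B|$ is not a base point of $B\otimes L$ (using $H^1(C,L)=0$) is exactly the point that needs checking in case (2).
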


\begin{proof}
The lemma easily follows from the fact that $K_{0,1}(C, B; L) \neq 0$ if and only if the multiplication map
$H^0(C, B) \otimes H^0(C, L) \rightarrow H^0(C, B \otimes L)$ is not surjective.
\end{proof}

\begin{proposition}\label{prop:K_{p+1,1}}
Let $B$ be a line bundle and $L$ a globally generated line bundle on $C$. Suppose that $B$ is $p$-very ample but not $(p+1)$-very ample so that there exist not necessarily distinct points $x_1, \ldots, x_{p+1} \in C$ such that $B(-\xi_{p+1})$ is not globally generated, where $\xi_i:=x_1 + \cdots + x_i$ for $1 \leq i \leq p+1$ and $\xi_0 := 0$. Assume that $L(-\xi_{p+1})$ is globally generated with $H^1(C, L(-\xi_{p+1}))=0$ and 
	$$
	H^1(C, \wedge^{p+2-i} M_{L(-\xi_i)} \otimes B(-\xi_{i-1}) \otimes L(-\xi_i)) = 0~~\text{ for every $1 \leq i \leq p+1$}.
	$$
Then $K_{p+1,1}(C, B;L) \neq 0$ except that $K_{p+1,1}(\nP^1,\sO_{\nP^1}(p);\sO_{\nP^1}(p+1))=0$ with $p \geq 1$.	
\end{proposition}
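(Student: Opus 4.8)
The plan is to recognize that the hypotheses of the Proposition are exactly those of Corollary~\ref{cor:K_{0,1}=>K_{l,1}} with $\ell = p+1$, so that the statement reduces to the bottom-level nonvanishing $K_{0,1}(C, B(-\xi_{p+1}); L(-\xi_{p+1})) \neq 0$, which I would then verify by hand using Lemma~\ref{lem:K_{0,1}I}; the sole failure will occur in the stated $\nP^1$ case.

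First I would supply the two hypotheses of Corollary~\ref{cor:K_{0,1}=>K_{l,1}} not literally listed in the Proposition. Since $\deg \xi_{p+1} = p+1$ and $B$ is $p$-very ample, the restriction $H^0(C,B) \to H^0(\xi_{p+1}, B|_{\xi_{p+1}})$ is surjective, so $B$ separates $\xi_{p+1}$. For global generation of the intermediate twists, I would record the elementary fact that a globally generated line bundle $F$ with $H^1(C,F)=0$ stays globally generated after tensoring by any effective divisor $D$: generation at a point $q$ amounts to $H^1(C, F(D-q))=0$, and $F(D-q)$ contains as a subsheaf either $F$ (if $q\in\Supp D$) or $F(-q)$ (if $q\notin\Supp D$), so $H^1(C,F(D-q))$ is a quotient of $H^1(C,F)=0$ or of $H^1(C,F(-q))=0$, the latter vanishing because $F$ is globally generated. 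Taking $F=L(-\xi_{p+1})$ shows every $L(-\xi_i)$ is globally generated, and the same subsheaf comparison gives $H^1(C,L(-\xi_i))=0$ for all $i$. Hence Corollary~\ref{cor:K_{0,1}=>K_{l,1}} applies and reduces the claim to proving $K_{0,1}(C, B(-\xi_{p+1}); L(-\xi_{p+1})) \neq 0$.

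Next I would exploit that $B(-\xi_{p+1})$ is not globally generated, splitting into two cases. If $H^0(C, B(-\xi_{p+1})) \neq 0$, then failure of global generation means $|B(-\xi_{p+1})|$ has nonempty base locus, so Lemma~\ref{lem:K_{0,1}I}(2), together with $H^1(C,L(-\xi_{p+1}))=0$, yields the nonvanishing at once. If $H^0(C, B(-\xi_{p+1}))=0$, then $p$-very ampleness forces $h^0(C,B)=h^0(C,B(-\xi_{p+1}))+(p+1)=p+1$; for $p\geq 1$ the complete linear system embeds $C$ as a nondegenerate curve in $\nP^p$, and were its degree $\geq p+1$ a general hyperplane section would produce $p+1$ linearly dependent points, contradicting $p$-very ampleness, so $C\subseteq\nP^p$ is a rational normal curve and $(C,B)=(\nP^1,\sO_{\nP^1}(p))$ (the residual case $p=0$ forcing $B=\sO_C$, handled directly). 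Writing $L=\sO_{\nP^1}(d)$, the global-generation hypotheses give $d\geq p+1$. If $d\geq p+2$, then $H^0(\nP^1, B(-\xi_{p+1}))=H^0(\sO_{\nP^1}(-1))=0$ while $H^0(\nP^1, B(-\xi_{p+1})\otimes L(-\xi_{p+1}))=H^0(\sO_{\nP^1}(d-p-2))\neq 0$, so Lemma~\ref{lem:K_{0,1}I}(1) again gives the nonvanishing. If $d=p+1$ we are in the exceptional case, and a direct computation on $\nP^1$ shows $K_{p+1,1}(\nP^1,\sO_{\nP^1}(p);\sO_{\nP^1}(p+1))=H^1(\nP^1,\wedge^{p+2}M_L\otimes\sO_{\nP^1}(p))=0$, the exterior power vanishing because $\rank M_L=h^0(\sO_{\nP^1}(p+1))-1=p+1<p+2$.

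The main obstacle is precisely the case $H^0(C, B(-\xi_{p+1}))=0$: one must recognize that $p$-very ampleness with the minimal number $p+1$ of sections pins $(C,B)$ down to a rational normal curve, and then carry out the explicit $\nP^1$ computation to see exactly where the reduction to $K_{0,1}$ breaks and the single exception $L=\sO_{\nP^1}(p+1)$ emerges. All remaining steps are routine applications of Corollary~\ref{cor:K_{0,1}=>K_{l,1}} and Lemma~\ref{lem:K_{0,1}I} already in place.
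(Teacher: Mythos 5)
Your proposal follows essentially the same route as the paper's proof: reduce via Corollary \ref{cor:K_{0,1}=>K_{l,1}} to the bottom-level nonvanishing $K_{0,1}(C,B(-\xi_{p+1});L(-\xi_{p+1}))\neq 0$, split according to whether $B(-\xi_{p+1})$ is effective, and apply Lemma \ref{lem:K_{0,1}I} in each branch. Your verification of the auxiliary hypotheses of the corollary (that $B$ separates $\xi_{p+1}$, and that the intermediate twists $L(-\xi_i)$ are globally generated and nonspecial) is correct and supplies details the paper leaves implicit, and your direct rational-normal-curve argument in the case $h^0(C,B)=p+1$, $p\geq 1$ is a sound, self-contained replacement for the paper's citation of \cite[Proposition 3.4]{DNP}.

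The one genuine gap is the subcase $h^0(C,B(-\xi_{p+1}))=0$ with $p=0$, i.e.\ $B=\sO_C$ on a curve of arbitrary genus, which you dismiss as ``handled directly.'' This case does require an argument (it is Case 1 of the paper's proof): the reduction lands on $K_{0,1}(C,\sO_C(-x_1);L(-x_1))$, where $H^0(C,\sO_C(-x_1))=0$, so neither of your two branches applies and one must instead invoke Lemma \ref{lem:K_{0,1}I}(1), which requires checking $H^0(C,L(-2x_1))\neq 0$. This holds when $h^0(C,L(-x_1))\geq 2$ by global generation of $L(-x_1)$; but when $h^0(C,L(-x_1))=1$ one is forced into $C=\nP^1$, $L=\sO_{\nP^1}(1)$, where $K_{1,1}(\nP^1,\sO_{\nP^1};\sO_{\nP^1}(1))=0$ --- the $p=0$ member of the exceptional family, which the paper's proof records explicitly even though the statement's exception is phrased with $p\geq 1$. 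So your assertion that the sole failure occurs in the stated $\nP^1$ case with $p\geq 1$ is not established by your argument; the $B=\sO_C$ case must be isolated and worked out.
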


\begin{proof} 
Put $\xi:=\xi_{p+1}$. 
Since $B$ is $p$-very ample, we have $h^0(C, B)\geq p+1$. We first consider the case when $h^0(C, B)=p+1$. By \cite[Proposition 3.4]{DNP}, we have the following two cases:

\smallskip
	
\noindent \emph{Case} 1: $p=0$ and $B=\sO_C$. By assumption, $L(-x_1)$ is globally generated. If $h^0(C, L(-x_1))=1$, then $L=\sO_C(x_1)$. But $L$ is also globally generated, so we must have $C=\nP^1$ and $L=\sO_{\nP^1}(1)$, and thus $K_{1,1}(\nP^1,\sO_{\nP^1};\sO_{\nP^1}(1))=0$. If $h^0(C, L(-x_1))\geq 2$, then $h^0(C, L(-2x_1))>0$. By Lemma \ref{lem:K_{0,1}I}, $K_{0,1}(C,\sO_C(-x_1);L(-x_1))\neq 0$, and thus, by Lemma \ref{lem:K_{l-1,1}=>K_{l,1}}, we conclude $K_{1,1}(C,\sO_C;L)\neq 0$.
	
\smallskip 

\noindent \emph{Case} 2: $p\geq 1$, $C=\nP^1$ and $B=\sO_{\nP^1}(p)$. In this case, $L=\sO_{\nP^1}(m)$ with $m\geq p+1$. If $m=p+1$, then we have $K_{p+1,1}(\nP^1,\sO_{\nP^1}(p);\sO_{\nP^1}(p+1))=0$. If $m\geq p+2$, then $K_{0,1}(\nP^1,B(-\xi);L(-\xi))=K_{0,1}(\nP^1,\sO_{\nP^1}(-1);\sO_{\nP^1}(m-p-1))\neq 0$. Corollary \ref{cor:K_{0,1}=>K_{l,1}} gives $K_{p+1,1}(\nP^1,\sO_{\nP^1}(p);\sO_{\nP^1}(m))\neq 0$.

\smallskip

\noindent Now, we consider the case when $h^0(C, B)\geq p+2$. The line bundle $B(-\xi)$ is effective and the linear system $|B(-\xi)|$ has nonempty base locus. By Lemma \ref{lem:K_{0,1}I}, $K_{0,1}(C,B(-\xi);L(-\xi))\neq0$, and thus, by Corollary \ref{cor:K_{0,1}=>K_{l,1}}, we conclude $K_{p+1,1}(C,B;L)\neq 0$. 
\end{proof}

Now, we obtain a slight improvement of \cite[Proposition 3.5]{Agostini:AsymSyz}.

\begin{corollary}\label{cor:K_{p+1,1}}
Let $B$ and $L$ be line bundles on $C$. Suppose that $B$ is $p$-very ample but not $(p+1)$-very ample. 
If $\deg L\geq 2g+p+1$ then $K_{p+1,1}(C, B;L) \neq 0$ except for the following:
\begin{enumerate}
\item[(1)] $g=0$, $B=\sO_{\nP^1}(p)$, $L=\sO_{\nP^1}(p+1)$ with $p \geq 1$. In this case, $K_{p+1,1}(\nP^1,B; L)=0$.
\item[(2)] $g=1$, $B=\sO_C$, $\deg L=3$. In this case, $K_{1,1}(C; L)=0$.
\end{enumerate}
\end{corollary}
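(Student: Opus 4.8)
The plan is to deduce the corollary from Proposition~\ref{prop:K_{p+1,1}} by verifying its hypotheses from the single numerical input $\deg L \geq 2g+p+1$, reserving a short separate argument for the boundary configurations. Since $B$ is $p$-very ample but not $(p+1)$-very ample, I would first fix points $x_1,\dots,x_{p+1}$ with $B(-\xi_{p+1})$ not globally generated, where $\xi_i := x_1+\cdots+x_i$ and $\xi_0:=0$, exactly as in the proposition. The global-generation and specialty requirements are then immediate from the degree bound: for every $0 \leq i \leq p+1$ one has $\deg\big(L(-\xi_i)\big) = \deg L - i \geq 2g$, so each $L(-\xi_i)$ is globally generated and nonspecial, and in particular $L(-\xi_{p+1})$ is globally generated with $H^1(C,L(-\xi_{p+1}))=0$.

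The substantive hypotheses of the proposition are the vanishings $H^1\big(C,\wedge^{p+2-i}M_{L(-\xi_i)}\otimes B(-\xi_{i-1})\otimes L(-\xi_i)\big)=0$ for $1\leq i\leq p+1$, and these I would extract from the base-point-free-pencil-trick Corollary~\ref{cor:bpftrick} applied with the pair $\big(L(-\xi_i),\,B(-\xi_{i-1})\big)$ in place of $(L,B)$. One checks first that $B(-\xi_{i-1})$ is effective: $p$-very ampleness gives $h^0(C,B)\geq p+1$ and $h^0\big(C,B(-\xi_{i-1})\big)\geq p-i+2\geq 1$. When $B(-\xi_{i-1})\neq \sO_C$, the relevant inequality in Corollary~\ref{cor:bpftrick} reads $p+1-i \leq \deg\big(L(-\xi_i)\big)-2g = \deg L - i - 2g$, which is precisely $\deg L \geq 2g+p+1$; thus the vanishing holds for every such $i$, the proposition applies, and $K_{p+1,1}(C,B;L)\neq 0$ outside the $\nP^1$-exception recorded there.

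It therefore remains to pin down exactly when some $B(-\xi_{i-1})$ equals $\sO_C$ and to treat those cases by hand. I would observe that $B(-\xi_{i-1})=\sO_C$ forces $\deg B = i-1$; combined with $p+1\leq h^0(C,B)\leq \deg B + 1 = i \leq p+1$ this pins down $i=p+1$, $\deg B = p$ and $h^0(C,B)=\deg B+1$, and the last equality holds only for $C=\nP^1$ (then $B=\sO_{\nP^1}(p)$) or for $B=\sO_C$ with $p=0$. The curve $\nP^1$ is covered by the proposition's own analysis of $\nP^1$ (its Cases~1 and~2), which isolates precisely the vanishing $K_{p+1,1}(\nP^1,\sO_{\nP^1}(p);\sO_{\nP^1}(p+1))=0$ of exception~(1). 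For $B=\sO_C$ and $p=0$ with $\deg L\geq 2g+2$, the $\sO_C$-branch of Corollary~\ref{cor:bpftrick} still supplies the needed vanishing $H^1\big(C,M_{L(-x_1)}\otimes L(-x_1)\big)=0$, so the proposition again gives $K_{1,1}(C;L)\neq 0$.

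The one genuinely delicate point, which I expect to be the main obstacle, is the borderline $B=\sO_C$, $p=0$, $\deg L = 2g+1$: there Corollary~\ref{cor:bpftrick} misses the vanishing by a single unit of degree, so the proposition is unavailable. Here I would argue directly with Green--Lazarsfeld's nonvanishing theorem (Theorem~\ref{thm:GLnonvan}): writing $L = M_1\otimes M_2$ with $M_1$ a pencil computing $\gon(C)$ and $M_2 = L\otimes M_1^{-1}$, the bound $\gon(C)\leq g$ (valid for $g\geq 2$) gives $\deg M_2 \geq g+1$, hence $r(M_2)\geq 1 = r(M_1)$ and $K_{1,1}(C;L)\neq 0$. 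For $g=1$ one has $\gon(C)=2$, so $\deg M_2 = \deg L - 2 = 1$ and this factorization breaks down at $\deg L = 3$; a direct check (e.g. a plane cubic lies on no quadric) then confirms $K_{1,1}(C;L)=0$, which is exception~(2), while for $\deg L\geq 4$ the same construction restores nonvanishing.
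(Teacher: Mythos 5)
Your proposal is correct and follows essentially the same route as the paper: reduce to Proposition \ref{prop:K_{p+1,1}}, verify its cohomological hypotheses via Corollary \ref{cor:bpftrick} applied to the pairs $(L(-\xi_i), B(-\xi_{i-1}))$ (the inequality $p+1-i\leq \deg L(-\xi_i)-2g$ being exactly $\deg L\geq 2g+p+1$), and then isolate the degenerate configurations where some $B(-\xi_{i-1})=\sO_C$, which your numerical argument correctly pins down to $i=p+1$ with either $C=\nP^1$, $B=\sO_{\nP^1}(p)$ or $B=\sO_C$, $p=0$ — the same dichotomy the paper extracts from \cite[Proposition 3.4]{DNP}. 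The only divergence is the borderline subcase $B=\sO_C$, $p=0$, $\deg L=2g+1$, where Corollary \ref{cor:bpftrick} just fails: the paper handles all of $\deg L\geq 2g+1$ at once by a direct dimension count $\dim K_{1,1}(C;L)=\tfrac{(d-g)^2-d-g}{2}$ (using injectivity of the first Koszul differential and Green's theorem for surjectivity of the second), whereas you invoke Green--Lazarsfeld's nonvanishing (Theorem \ref{thm:GLnonvan}) with a gonality pencil for $g\geq 2$ and check $g=1$, $\deg L=3$ by hand; both arguments are valid, the paper's having the small advantage of producing the exact Betti number and uniformly covering low genus.
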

	
\begin{proof}
There exist  points $x_1, \ldots, x_{p+1} \in C$ such that $B(-\xi_{p+1})$ is not globally generated, where $\xi_i:=x_1 + \cdots + x_i$ for $1 \leq i \leq p+1$ and $\xi_0 := 0$. Note that $L(-\xi_{p+1})$ is globally generated and $H^1(C, L(-\xi_{p+1}))=0$. Thanks to Proposition \ref{prop:K_{p+1,1}}, we only need to check that
\begin{equation}\label{eq:H^1(wedge^{p+2-i}M_L(-xi_i))}
H^1(C, \wedge^{p+2-i} M_{L(-\xi_i)} \otimes B(-\xi_{i-1}) \otimes L(-\xi_i)) =0~~\text{ for every $1 \leq i \leq p+1$}.
\end{equation}
Note that $B(-\xi_{i-1})$ is effective and $B(-\xi_{i-1}) \otimes L(-\xi_i)$ is $(p+1-i)$-very ample. We have
$$
\deg L(-\xi_i) \geq 2g+(p+1-i).
$$
If $B(-\xi_p) \neq \sO_C$, then Corollary \ref{cor:bpftrick} yields (\ref{eq:H^1(wedge^{p+2-i}M_L(-xi_i))}) as desired. On the other hand, if $B(-\xi_p)=\sO_C$, then $B=\sO_C(\xi_p)$ and $h^0(C, B)=p+1$. By \cite[Proposition 3.4]{DNP}, we have either $p\geq 1$, $C=\nP^1$, $B=\sO_{\nP^1}(p)$ or $p=0$, $B=\sO_C$. For the former case, Case 2 in the proof of Proposition \ref{prop:K_{p+1,1}} yields that $K_{p+1,1}(\nP^1, B;L) = 0$ if and only if $L=\sO_{\nP^1}(p+1)$. For the latter case, consider the Koszul-type complex
$$
\wedge^2 H^0(C, L) \stackrel{\delta_1}\longrightarrow H^0(C, L)\otimes H^0(C, L)\stackrel{\delta_2}{\longrightarrow} H^0(C, L^2)
$$
computing $K_{1,1}(C;L)$. Here $\delta_1$ is clearly injective, and $\delta_2$ is surjective by Green's $(2g+1+p)$-theorem.
Putting $d:=\deg L \geq 2g+1$, we can directly compute 
$$
\dim K_{1,1}(C; L) = \frac{(d-g)^2-d-g}{2} \geq \frac{g^2-g}{2}.
$$ 
Thus $K_{1,1}(C; L) = 0$ if and only if $g=1$ and $d=3$. 
\end{proof}

Now, we want to apply Corollary \ref{cor:K_{0,1}=>K_{l,1}} to study when $K_{p,1}(C, B; L) \neq 0$ under the assumption that $B$ is $p$-very ample. 
Note that $B(-\xi)$ is globally generated for every $\xi:=x_1 + \cdots + x_p \in C_p$. Suppose that $L$ satisfies the conditions in Corollary \ref{cor:K_{0,1}=>K_{l,1}}. We have
$$
K_{0,1}(C, B(-\xi); L(-\xi)) = K_{0,1}(C, L(-\xi); B(-\xi)) = H^1(C, M_{B(-\xi)} \otimes L(-\xi)).
$$
Theorem \ref{main:03} says that if $h^1(C, L \otimes B^{-1}) \leq r(B)-p-1$, then $K_{p,1}(C, B; L) = 0$. Thus the problem is to find a condition for $K_{0,1}(C, L(-\xi);B(-\xi)) \neq 0$  assuming that
$$
h^1(C, L(-\xi) \otimes B(-\xi)^{-1}) = h^1(C, L \otimes B^{-1}) \geq r(B)-p = r(B(-\xi)).
$$

\begin{lemma}\label{lem:K_{0,1}II}
Let $B$ and $L$ be globally generated line bundles on $C$. Assume that $h^1(C, L)=0$ and $h^1(C, L \otimes B^{-1}) \geq r(B)$. 
	\begin{enumerate}
		\item If $B=\sO_C$, then $K_{0,1}(C;L)=0$.
		\item If $B\neq \sO_C$ and $H^0(C, L \otimes \omega_C^{-1}) \neq 0$, then $K_{0,1}(C,B;L)=K_{0,1}(C, L; B) \neq 0$.
	\end{enumerate}
\end{lemma}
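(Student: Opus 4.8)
The plan is to dispose of part (1) instantly and concentrate all effort on the nonvanishing in part (2). For part (1), when $B=\sO_C$ the multiplication map $H^0(C,\sO_C)\otimes H^0(C,L)\to H^0(C,L)$ is the identity, hence surjective, so $K_{0,1}(C;L)=0$; this is already recorded in the introduction. The symmetry $K_{0,1}(C,B;L)=K_{0,1}(C,L;B)$ asserted in part (2) is immediate from Lemma \ref{lem:K_{0,1}I}, since both groups are the cokernel of the single multiplication map
$$
m\colon H^0(C,B)\otimes H^0(C,L)\longrightarrow H^0(C,B\otimes L),
$$
which is symmetric in $B$ and $L$. Thus everything reduces to showing that $m$ is not surjective.

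First I would translate the three hypotheses into geometry on an effective divisor. Since $H^0(C,L\otimes\omega_C^{-1})\neq 0$, I can write $L=\omega_C(E)$ for an effective divisor $E$; because $h^1(C,L)=0$ rules out $L=\omega_C$, we have $\deg E\geq 1$. Serre duality gives $h^1(C,L\otimes B^{-1})=h^0(C,B(-E))$, so the standing assumption $h^1(C,L\otimes B^{-1})\geq r(B)=h^0(C,B)-1$ reads $h^0(C,B(-E))\geq h^0(C,B)-1$. On the other hand $B$ is globally generated and $E\neq 0$, so not every section of $B$ vanishes on $E$ and $h^0(C,B(-E))\leq h^0(C,B)-1$. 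Hence $h^0(C,B(-E))=h^0(C,B)-1$ \emph{exactly}, which says precisely that the restriction map $H^0(C,B)\to H^0(E,B|_E)$ has one-dimensional image, spanned by a single class $\overline{s}_0$.

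The decisive step is then a restriction-to-$E$ argument. From the sequence $0\to B\otimes\omega_C\to B\otimes L\to (B\otimes L)|_E\to 0$ (using $B\otimes L(-E)=B\otimes\omega_C$) together with $H^1(C,B\otimes\omega_C)=H^0(C,B^{-1})^\vee=0$, the restriction $r_{BL}\colon H^0(C,B\otimes L)\to H^0(E,(B\otimes L)|_E)$ is surjective onto a space of dimension $\deg E$. Meanwhile $r_{BL}(st)=(s|_E)(t|_E)$ is multiplicative, $s|_E$ always lies in $\langle\overline{s}_0\rangle$, and $t|_E$ lies in the image of $r_L\colon H^0(C,L)\to H^0(E,L|_E)$, whose dimension is $h^0(C,L)-h^0(C,\omega_C)=\deg E-1$. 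Therefore $r_{BL}(\im m)\subseteq \overline{s}_0\cdot\im(r_L)$ has dimension at most $\deg E-1<\deg E$, so $\im m$ cannot be all of $H^0(C,B\otimes L)$, and $K_{0,1}(C,B;L)\neq 0$.

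The hard part will be recognizing that the purely cohomological reformulations are circular: identifying $K_{0,1}(C,B;L)$ with $H^1(C,M_B\otimes L)$ and dualizing only reproduces the tautology $\coker(m)^\vee=\ker(m^\vee)$, and a plain Euler-characteristic count fails because $\chi(M_B\otimes L)=h^0(B)h^0(L)-h^0(B\otimes L)>0$ in the relevant range. The genuine input is the forced equality $h^0(C,B(-E))=h^0(C,B)-1$, and the insight that it must be exploited only after restricting to $E$, where the one-dimensionality of $\overline{s}_0$ collides with the codimension-one image of $r_L$ forced by the residue constraint on $H^0(C,\omega_C(E))$. I would also double-check that the argument is robust when $E$ is non-reduced and when $\overline{s}_0$ has zeros on $E$ (in which case $\dim r_{BL}(\im m)$ only drops further), and confirm that the degenerate elliptic and rational cases are vacuous under the hypotheses.
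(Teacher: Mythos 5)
Your proof is correct, but it takes a genuinely different route from the paper's. The paper dualizes: it identifies $K_{0,1}(C,L;B)=H^1(C,M_B\otimes L)$ with $H^0(C,\wedge^{r(B)-1}M_B\otimes \omega_C\otimes B\otimes L^{-1})^{\vee}=K_{r(B)-1,0}(C,\omega_C\otimes B\otimes L^{-1};B)^{\vee}$ (using $h^1(C,L)=0$), then exhibits an explicit nonzero Koszul cycle $\sum_i(-1)^i(s_1t\wedge\cdots\wedge\widehat{s_it}\wedge\cdots\wedge s_{r(B)}t)\otimes s_i$ built from $r(B)$ independent sections $s_i$ of $\omega_C\otimes B\otimes L^{-1}$ and a nonzero $t\in H^0(C,L\otimes\omega_C^{-1})$ --- a Green--Lazarsfeld-type construction that fits the kernel-bundle machinery used throughout the paper. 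You instead write $L=\omega_C(E)$ with $E>0$, observe that global generation of $B$ forces the image of $H^0(C,B)\to H^0(E,B|_E)$ to be exactly one-dimensional, and kill surjectivity of the multiplication map by a dimension count on the length-$\deg E$ scheme $E$: the products restrict into $\overline{s}_0\cdot\operatorname{im}(r_L)$, of dimension at most $\deg E-1$, while $r_{BL}$ is onto a $\deg E$-dimensional space. Both arguments hinge on the same input (the sections $s_i$ in the paper are precisely a basis of $H^0(C,B(-E))$, i.e.\ the codimension-one kernel of your restriction map), but yours is more elementary --- only Riemann--Roch and restriction sequences --- and makes the failure of surjectivity geometrically transparent, whereas the paper's version produces an explicit nonzero syzygy class and avoids choosing the divisor $E$. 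Your steps all check out, including the edge cases ($E\neq 0$ follows from $h^1(C,L)=0$, $H^1(C,B\otimes\omega_C)=0$ from $\deg B>0$, and $\dim\operatorname{im}(r_L)=h^0(C,L)-h^0(C,\omega_C)=\deg E-1$ holds for all $g\geq 0$).
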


\begin{proof}
As the assertion (1) is trivial, we only need to prove the assertion (2).
	We have
	$$
	K_{0,1}(C, L; B) = H^1(C, M_B \otimes L) = H^0(C, \wedge^{r(B)-1} M_B \otimes \omega_C \otimes B \otimes L^{-1})^{\vee}.
	$$
	As $H^0(C, \omega_C \otimes L^{-1})= 0$, we have
	$$
	H^0(C, \wedge^{r(B)-1} M_B \otimes \omega_C \otimes B \otimes L^{-1}) = K_{r(B)-1, 0}(C, \omega_C \otimes B \otimes L^{-1}; B).
	$$
Since $h^0(C, \omega_C \otimes B \otimes L^{-1}) = h^1(C, L \otimes B^{-1}) \geq r(B)$, we can choose linearly independent sections $s_1, \ldots, s_{r(B)} \in H^0(C, \omega_C \otimes B \otimes L^{-1})$. We can also take  a nonzero section $t \in H^0(C, L \otimes \omega_C^{-1})$. Then
	$$
	\sum_{i=1}^{r(B)} (-1)^i (s_1t \wedge \cdots \wedge \widehat{s_i t} \wedge \cdots \wedge s_{r(B)}t) \otimes s_i \in \wedge^{r(B)-1} H^0(C, B) \otimes H^0(C, \omega_C \otimes B \otimes L^{-1})
	$$
	is killed by the Koszul differential, so this yields $K_{r(B)-1, 0}(C, \omega_C \otimes B \otimes L^{-1}; B) \neq 0$.
\end{proof}

\begin{corollary}\label{cor:K_{p,1}}
Let $B$ be a $p$-very ample line bundle with $r(B) \geq p+1$ and $L$ a line bundle on $C$ with $h^1(C, L \otimes B^{-1}) \geq r(B)-p$. Assume that there exist not necessarily distinct points $x_1, \ldots, x_p \in C$ such that if $\xi_i:=x_1 + \cdots + x_i$ for $1 \leq i \leq p$ and $\xi_0:=0$, then $L(-\xi_p)$ is globally generated with $H^1(C, L(-\xi_p))=0$, $H^0(C, L(-\xi_p) \otimes \omega_C^{-1}) \neq 0$, and
\begin{equation}\label{eq:vanconforK_{p,1}}
	H^1(C, \wedge^{p+1-i} M_{L(-\xi_i)} \otimes B(-\xi_{i-1}) \otimes L(-\xi_i)) = 0~~\text{ 	for every $1 \leq i \leq p$.}
\end{equation}
Then $K_{p,1}(C, B; L) \neq 0$. In particular, if $\deg L \geq 2g+p$ with $H^0(C, L(-\xi) \otimes \omega_C^{-1}) \neq 0$ for some $\xi \in C_p$, then $K_{p,1}(C, B; L) \neq 0$.
\end{corollary}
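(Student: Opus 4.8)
The plan is to peel off the $p$ points $x_1,\dots,x_p$ one at a time, reducing the nonvanishing of $K_{p,1}(C,B;L)$ to a $K_{0,1}$-nonvanishing, and then to produce the latter directly. Concretely, I would apply Corollary \ref{cor:K_{0,1}=>K_{l,1}} with $\ell=p$. Its hypotheses are met under our standing assumptions: since $B$ is $p$-very ample it separates the degree-$p$ divisor $\xi_p$, and $B(-\xi_p)$ is globally generated; the global generation and the vanishing $H^1(C,L(-\xi_p))=0$, together with the vanishing \eqref{eq:vanconforK_{p,1}}, are exactly the remaining hypotheses (the bundles $M_{L(-\xi_i)}$ appearing in \eqref{eq:vanconforK_{p,1}} force each $L(-\xi_i)$ to be globally generated). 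Corollary \ref{cor:K_{0,1}=>K_{l,1}} then reduces the claim to showing $K_{0,1}(C, B(-\xi_p); L(-\xi_p)) \neq 0$.

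Writing $B':=B(-\xi_p)$ and $L':=L(-\xi_p)$, I would establish this via Lemma \ref{lem:K_{0,1}II}(2), with its nonspecial factor played by $L'$ and its other factor by $B'$. The hypotheses translate cleanly: $L'$ and $B'$ are globally generated as noted, and $h^1(C,L')=0$ by assumption. The key numerical input is the identity $L'\otimes (B')^{-1}=L\otimes B^{-1}$, which yields
$$
h^1(C,L'\otimes (B')^{-1}) = h^1(C,L\otimes B^{-1}) \geq r(B)-p = r(B'),
$$
where $r(B')=r(B)-p$ because the $p$-very ampleness of $B$ makes $\xi_p$ impose independent conditions. Since $r(B')=r(B)-p\geq 1$ we have $B'\neq\sO_C$, and $H^0(C,L'\otimes\omega_C^{-1})\neq 0$ is a hypothesis; thus Lemma \ref{lem:K_{0,1}II}(2) gives $K_{0,1}(C,B';L')=K_{0,1}(C,L';B')\neq 0$, and combining with the reduction above produces $K_{p,1}(C,B;L)\neq 0$.

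For the \emph{in particular} statement I would simply verify the standing hypotheses when $\deg L\geq 2g+p$. Fixing $\xi\in C_p$ with $H^0(C,L(-\xi)\otimes\omega_C^{-1})\neq 0$, I write $\xi=x_1+\cdots+x_p$ in any order and form the $\xi_i$ as usual. Each $L(-\xi_i)$ then has degree $\deg L-i\geq 2g$, hence is globally generated and nonspecial, which supplies the global generation and $H^1(C,L(-\xi_p))=0$; and $H^0(C,L(-\xi_p)\otimes\omega_C^{-1})\neq 0$ by the choice of $\xi$. To check \eqref{eq:vanconforK_{p,1}} I would invoke the base point free pencil trick (Corollary \ref{cor:bpftrick}) with its nonspecial globally generated bundle taken to be $L(-\xi_i)$, its effective bundle taken to be $B(-\xi_{i-1})$, and $w=p-i$. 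The count $w=p-i\leq \deg L-i-2g=\deg\big(L(-\xi_i)\big)-2g$ places us in the admissible range, and since $r(B)\geq p+1$ forces $\deg B\geq r(B)\geq p+1>i-1$ we have $B(-\xi_{i-1})\neq\sO_C$, so the $B\neq\sO_C$ case of Corollary \ref{cor:bpftrick} applies and gives the required vanishing.

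All the conceptual content is supplied by Corollary \ref{cor:K_{0,1}=>K_{l,1}}, Lemma \ref{lem:K_{0,1}II}, and Corollary \ref{cor:bpftrick}; I do not expect any substantive obstacle. The only point demanding genuine care is the bookkeeping that converts the hypothesis $h^1(C,L\otimes B^{-1})\geq r(B)-p$ into the exact inequality $h^1(C,L'\otimes (B')^{-1})\geq r(B')$ required by Lemma \ref{lem:K_{0,1}II}, which rests on the two identities $L'\otimes (B')^{-1}=L\otimes B^{-1}$ and $r(B')=r(B)-p$, the latter being where $p$-very ampleness is essential.
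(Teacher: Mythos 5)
Your proposal is correct and follows essentially the same route as the paper: reduce via Corollary \ref{cor:K_{0,1}=>K_{l,1}} to the nonvanishing of $K_{0,1}(C,B(-\xi_p);L(-\xi_p))$, supply that by Lemma \ref{lem:K_{0,1}II}(2) using $L(-\xi_p)\otimes B(-\xi_p)^{-1}=L\otimes B^{-1}$ and $r(B(-\xi_p))=r(B)-p$, and verify \eqref{eq:vanconforK_{p,1}} for the ``in particular'' part with Corollary \ref{cor:bpftrick}. The paper's own proof is just a terser version of exactly this bookkeeping.
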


\begin{proof}
Note that $h^0(C, B) \geq p+2$ implies $B(-\xi_p) \neq \sO_C$. 
The first part of the corollary then follows from Corollary \ref{cor:K_{0,1}=>K_{l,1}} and Lemma \ref{lem:K_{0,1}II}. For the second part, write $\xi=x_1 + \cdots + x_p$. It is enough to check (\ref{eq:vanconforK_{p,1}}) when $\xi_i:=x_1 + \cdots + x_i$ for $1 \leq i \leq p$ and $\xi_0:=0$. But this follows from Corollary \ref{cor:bpftrick} since $\deg L(-\xi_i) \geq 2g+(p-i)$.
\end{proof}

As an interesting quick application, we recover one part of Green--Lazarsfeld's result \cite[Theorem 1.2]{Lazarsfeld:SomeResults} on the failure of $N_p$-property for a line bundle of large degree. 

\begin{corollary}[{Green--Lazarsfeld}]
Assume that $g \geq 1$. Let $L$ be a line bundle on $C$ of degree $2g+p+1$. If $H^0(C, L \otimes \omega_C^{-1}) \neq 0$, then $K_{p+1, 2}(C; L) \neq 0$. 
\end{corollary}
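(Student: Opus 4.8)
The plan is to reduce this to the nonvanishing criterion already proved in Corollary \ref{cor:K_{p,1}}. Since $B=L$, one has the identification $K_{p+1,2}(C;L)=K_{p+1,1}(C,L;L)$, so it suffices to show $K_{p+1,1}(C,L;L)\neq 0$. I would apply the ``in particular'' clause of Corollary \ref{cor:K_{p,1}} with the integer $p$ there replaced by $p+1$ and with $B:=L$; everything then comes down to checking its hypotheses, all of which turn out to hold with equality.

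First I would verify the structural conditions. As $\deg L = 2g+p+1 = 2g+(p+1)$, Riemann--Roch shows that $L$ is $(p+1)$-very ample and that $r(L)=h^0(C,L)-1=g+p+1$; since $g\geq 1$ this gives $r(L)\geq (p+1)+1$. Next, $h^1(C,L\otimes L^{-1})=h^1(C,\sO_C)=g=r(L)-(p+1)$, so the specialty hypothesis $h^1(C,L\otimes B^{-1})\geq r(B)-(p+1)$ is satisfied, and $\deg L = 2g+p+1\geq 2g+(p+1)$ gives the degree hypothesis.

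The remaining ingredient is an effective divisor $\xi$ of degree $p+1$ with $H^0(C,L(-\xi)\otimes\omega_C^{-1})\neq 0$, and this is exactly where the assumption $H^0(C,L\otimes\omega_C^{-1})\neq 0$ enters. Since $\deg(L\otimes\omega_C^{-1})=2g+p+1-(2g-2)=p+3$, I would take an effective divisor $D\in|L\otimes\omega_C^{-1}|$ and choose an effective divisor $\xi\leq D$ of degree $p+1$ (possible because $p+3\geq p+1$). Then $L(-\xi)\otimes\omega_C^{-1}=\sO_C(D-\xi)$ is effective of degree $2$, so it has a nonzero section. Corollary \ref{cor:K_{p,1}} then yields $K_{p+1,1}(C,L;L)\neq 0$, hence $K_{p+1,2}(C;L)\neq 0$.

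This is largely bookkeeping once the correct specialization of Corollary \ref{cor:K_{p,1}} is identified, and I do not anticipate a genuine obstacle. The one point requiring care is that all the numerical inequalities are met with equality at $\deg L = 2g+p+1$, so that the hypothesis $H^0(C,L\otimes\omega_C^{-1})\neq 0$ is precisely what is needed both to construct the divisor $\xi$ and to push the argument through.
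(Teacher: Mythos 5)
Your proposal is correct and follows essentially the same route as the paper: both reduce $K_{p+1,2}(C;L)=K_{p+1,1}(C,L;L)$ to the ``in particular'' clause of Corollary \ref{cor:K_{p,1}} applied with $B=L$ and $p$ replaced by $p+1$, verify that $L$ is $(p+1)$-very ample with $r(L)=g+p+1\geq p+2$ and $h^1(C,L\otimes L^{-1})=g=r(L)-(p+1)$, and produce the required $\xi\in C_{p+1}$ by trimming an effective divisor in $|L\otimes\omega_C^{-1}|$, which has degree $p+3$. Your write-up is in fact slightly more explicit than the paper's about why each hypothesis holds (with equality).
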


\begin{proof}
Notice that $L$ is $(p+1)$-very ample, $r(L) = g+p+1 \geq p+2$, and $h^1(C, L \otimes L^{-1})=r(L)-(p+1)$. As $L\otimes \omega^{-1}_C$ is effective and has degree $p+3$, there is an effective divisor $\xi \in C_{p+1}$ such that $H^0(C, (L \otimes \omega_C^{-1})(-\xi)) \neq 0$. Then Corollary \ref{cor:K_{p,1}} yields $K_{p+1, 2}(C; L) \neq 0$. 
\end{proof}

\section{Effective gonality theorem}\label{sec:effgonthm}

\noindent In this section, we prove Theorem \ref{main:04} and deduce Theorem \ref{thm:effgon} and Corollary \ref{main:02} from it. We assume that $C$ is a smooth projective curve of genus $g\geq 2$. 

\begin{proof}[Proof of Theorem \ref{main:04}] 

As the assertion is trivial when $B = \sO_C$, we assume that $B \neq \sO_C$. The line bundle $B$ is $p$-very ample, so  we have $r(B)\geq p+1$ by the classification in \cite[Proposition 3.4]{DNP}. Observe that if $\xi$ is an effective divisor of degree $p$, then $|B(-\xi)|$ is a nonzero base point free linear system  and thus $\gon(C)\leq \deg B-p$. This particularly implies that $\Cliff(C)\leq \deg B-p-2$ (recall that Coppens--Martens \cite[Theorem 2.2]{CM} proved that $\Cliff(C)=\gon(C)-2$ or $\gon(C)-3$). Using this observation and  Riemann--Roch theorem applied for the line bundle $B$, we deduce from the second inequality in the hypothesis the following inequality
\begin{equation}\label{eq:Lbound}
	\deg L\geq 2g+3p+4-2h^0(C,B).
\end{equation}
By Theorem \ref{main:03} and Remark \ref{rem:h^1<->degL}, if
$$
\deg L \geq 2g + p + h^0(C, L \otimes B^{-1}) - h^1(C, B),
$$
then we  immediately  have $K_{i,1}(C, B; L) = 0$ for $0 \leq i \leq p$. Thus in the sequel, we focus on the case that 
\begin{equation}\label{eq:thm04-1}
	\deg L \leq 2g + p + h^0(C, L \otimes B^{-1}) - h^1(C, B)-1,
\end{equation}
which is equivalent to $h^1(C, L \otimes B^{-1}) \geq r(B)-p$ by Remark \ref{rem:h^1<->degL}. Then  $h^1(C, L \otimes B^{-1}) \geq 1$.
Furthermore, as $\deg L\geq 2g+p+1-h^1(C,B)$, the inequality (\ref{eq:thm04-1}) also implies that $h^0(C,L\otimes B^{-1})\geq 2$.

\medskip

If $h^1(C, L \otimes B^{-1})=1$, then Corollary \ref{cor:h^1(C,L-B)<=1} gives the exceptional cases (1) and (2) in the theorem. Notice also that in both cases the line bundle $L$ is nonspecial because of the degree bound of $L$ in (\ref{eq:Lbound}).

\medskip

Now, we consider the case when $h^1(C, L \otimes B^{-1}) \geq 2$. So the line bundle $L \otimes B^{-1}$ contributes to the Clifford index $\Cliff(C)$ of $C$ (the existence of a line bundle contributing $\Cliff(C)$ implies $g \geq 4$). This means that   
\begin{equation}\label{eq:thm04-3}
	(\deg L - \deg B) - 2h^0(C, L \otimes B^{-1}) + 2 = \Cliff(L \otimes B^{-1}) \geq \Cliff(C).
\end{equation}
Rewriting (\ref{eq:thm04-1}) as
\begin{equation}\label{eq:thm04-2}
	h^0(C, L \otimes B^{-1}) \geq \deg L -2g-p+1 + h^1(C, B)
\end{equation}
and combinng this with (\ref{eq:thm04-3}), we find
\begin{equation}\label{eq:thm04-4}
	(\deg L - \deg B) - \Cliff(C) + 2 \geq 2h^0(C, L \otimes B^{-1}) \geq 2\deg L -4g-2p+2 + 2h^1(C, B).
\end{equation}
Hence we obtain
\begin{equation}\label{eq:thm04-5}
	4g+2p-2h^1(C, B) - \Cliff(C) \geq \deg L + \deg B.
\end{equation}
However, the assumption on $\deg B+\deg L$ in the hypothesis of the theorem forces  (\ref{eq:thm04-5}) to be an equality. Consequently, all the inequalities in (\ref{eq:thm04-4}), (\ref{eq:thm04-2}), (\ref{eq:thm04-3}), (\ref{eq:thm04-1}) are equalities, and $h^1(C, L \otimes B^{-1}) = r(B)-p$. In particular, $\deg L = 2g+p+h^0(C, L \otimes B^{-1})-h^1(C, B)-1$, $\deg B =2g+p-h^0(C, L \otimes B^{-1}) - h^1(C, B)+1-\Cliff(C)$, and $L \otimes B^{-1}$ computes $\Cliff(C)$. This is the case (3) stated in the theorem. Furthermore, as $h^1(C, L \otimes B^{-1}) = r(B)-p$, we obtain $K_{i,1}(C, B; L) = 0$ for $0 \leq i \leq p-1$ by Theorem \ref{main:03}.

\medskip

Turning to the exceptional case $(3)$ with $B=\omega_C$, we have
$$
2g-2 = \deg \omega_C = 2g+p-h^0(C, L \otimes B^{-1})-\Cliff(C) \leq 2g-2+(p-\Cliff(C)).
$$
Thus $\Cliff(C) \leq p$. As $p+2 \leq \gon(C)$, we get $p-1 \leq \Cliff(C) \leq p$ (recall that $\Cliff(C)=\gon(C)-2$ or $\gon(C)-3$). Consider the case when $\Cliff(C)=p$. Then $h^0(C, L \otimes \omega_C^{-1})=2$, and $\deg L = 2g+p$. Hence $\deg (L \otimes \omega_C^{-1})=p+2$, so $\gon(C)=p+2$. Since $h^1(C, L \otimes \omega_C^{-1}) = g-p-1$, Corollary \ref{cor:2g+p} shows that $L = \omega_C(\xi)$ for some $\xi \in C_{p+2}$ with $\dim |\xi| = 1$. Consider the case when $\Cliff(C)=p-1$ (hence $\gon(C)=p+2$). Then $h^0(C, L \otimes B^{-1})=3$, and $\deg L = 2g+p+1$. We have $\gamma(C) = 2$. Here
$$
\gamma(C):=\min\{r(A) \mid \text{$A$ computes $\Cliff(C)$}\}
$$
is the \emph{Clifford dimension} of $C$. It is well-known that $\gamma(C)=1$  if and only if $\Cliff(C)=\gon(C)-2$. Since $H:=L \otimes \omega_C^{-1}$ computes $\gamma(C)$, it follows that $H$ gives an embedding $C \subseteq \nP H^0(C, H) = \nP^2$. Then $\omega_C = H^p$, and $L=\omega_C \otimes H$. 
Now, in any event, $\deg L \geq 2g+p$ and $h^1(C, L \otimes \omega_C^{-1}) = g-p-1$. Furthermore, $r(L) \geq g+p \geq p+1$ and $H^0(C, L(-\xi) \otimes \omega_C^{-1}) \neq 0$ for some $\xi \in C_p$. Thus Corollary \ref{cor:K_{p,1}} yields $K_{p,1}(C, \omega_C; L) \neq 0$ (instead one can also apply Corollary \ref{cor:2g+p} to conclude $K_{p,1}(C, \omega_C; L) \neq 0$).

\medskip

Finally, if $B$ is not $(p+1)$-very ample and $\deg L \geq 2g+p+1$, Corollary \ref{cor:K_{p+1,1}} shows that $K_{p+1, 1}(C, B; L) \neq 0$ since we assume $g \geq 2$.
\end{proof}

\begin{remark}
Theorem \ref{main:04} holds true even when $g=0$ or $1$. However, more exceptional cases occur such as Corollaries \ref{cor:h^1(C,L-B)<=1} and \ref{cor:K_{p+1,1}}. We leave the details to the interested readers.
\end{remark}

The following is essentially equivalent to the effective gonality theorem (Theorem \ref{thm:effgon}).

\begin{theorem}\label{main:01} 
Let $C$ be a smooth projective curve of genus $g \geq 2$, and  $L$ be a line bundle on $C$.
Assume that $\omega_C$ is $p$-very ample.
	\begin{enumerate}
		\item [(1)]  If $\deg L\geq 2g+p+2$, then $K_{p,1}(C, \omega_C; L) = 0$. 
		\item [(2)] If $\deg L = 2g+p+1$, then $K_{p,1}(C, \omega_C; L) \neq 0$ if and only if $C \subseteq  \nP H^0(C, H) = \nP^2$ is a plane curve of degree $p+3 \geq 4$ and $L=\omega_C \otimes H$. In this case, $p=\gon(C)-2$. 
		\item [(3)] If $\deg L = 2g+p$, then $K_{p,1}(C, \omega_C; L) \neq 0$ if and only if $L = \omega_C(\xi)$ for some $\xi \in C_{p+2}$ with $\dim |\xi| = 1$. In this case, $p=\gon(C)-2$. 
	\end{enumerate}
\end{theorem}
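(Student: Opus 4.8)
The plan is to reduce all three parts to a single cohomological computation and then analyze it according to the degree of $L$. Since $g\geq 2$, $\omega_C$ is $p$-very ample, and $\deg L\geq 2g+p$ in every case, Corollary \ref{cor:2g+p} applies and gives the equivalence $K_{p,1}(C,\omega_C;L)\neq 0\Longleftrightarrow h^1(C,L\otimes\omega_C^{-1})\geq g-p-1$. Writing $A:=L\otimes\omega_C^{-1}$, so that $\deg A=\deg L-(2g-2)$, Riemann--Roch turns this into the clean statement $K_{p,1}(C,\omega_C;L)\neq 0\Longleftrightarrow h^0(C,A)\geq\deg A-p$. I would record at the outset the numerical constraints forced by $p$-very ampleness of $\omega_C$: one has $\gon(C)\geq p+2$, hence $\Cliff(C)\geq\gon(C)-3\geq p-1$ by Coppens--Martens \cite[Theorem 2.2]{CM} and $g\geq 2p+1$; moreover any line bundle $A$ with $h^0(C,A)\geq 2$ defines a pencil of degree at most $\deg A$, so $h^0(C,A)\geq 2$ forces $\deg A\geq\gon(C)\geq p+2$.

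Part (1) ($\deg A\geq p+4$) asks for the vanishing $h^0(C,A)\leq\deg A-p-1$, which I would prove by contradiction: assume $h^0(C,A)\geq\deg A-p$. If $A$ contributes to the Clifford index, i.e. $h^1(C,A)\geq 2$, then $\Cliff(A)=\deg A-2h^0(C,A)+2\leq-\deg A+2p+2\leq p-2$, contradicting $\Cliff(A)\geq\Cliff(C)\geq p-1$. The only remaining possibility $h^1(C,A)\leq 1$ forces $g\leq p+2$ and hence $p\leq 1$ via $g\geq 2p+1$; in those low cases $A$ is nonspecial and a direct Riemann--Roch count gives $h^0(C,A)=\deg A-p-1$, again contradicting the assumption. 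For part (3) ($\deg A=p+2$) the condition reads $h^0(C,A)\geq 2$, which by the gonality observation forces $p+2=\deg A\geq\gon(C)\geq p+2$, so $\gon(C)=p+2$ and $A$ is a gonality pencil. A short argument (if $h^0(C,A)\geq 3$ then either the Clifford bound contradicts $\Cliff(C)\geq p-1$ or $g\leq p+1$, both impossible) shows $h^0(C,A)=2$, hence $A=\sO_C(\xi)$ with $\dim|\xi|=1$ and $\deg\xi=p+2$, i.e. $L=\omega_C(\xi)$; the converse is immediate and gives $p=\gon(C)-2$.

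Part (2) ($\deg A=p+3$) is the most delicate, the condition being $h^0(C,A)\geq 3$. When $h^1(C,A)\geq 2$ the same Clifford computation forces $\Cliff(A)=p-1$ and $h^0(C,A)=3$, so $A$ computes $\Cliff(C)=p-1$ with $r(A)=2$; since $\gon(C)\geq p+2>\Cliff(C)+1$ the Clifford dimension cannot be one, so it equals two, and the classification of curves of Clifford dimension two realizes $C$ as a smooth plane curve of degree $p+3\geq 5$ with $A=H$ the hyperplane class, whence $L=\omega_C\otimes H$. When instead $h^1(C,A)\leq 1$, the inequality $g\geq 2p+1$ forces $p\leq 1$; the case $p=0$ yields $h^0(C,A)\leq 2$ and no exception, while a direct genus-three Riemann--Roch analysis pins down the remaining plane quartic ($p=1$, $A=\omega_C=H$, $L=\omega_C^2=\omega_C\otimes H$). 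Assembling both subcases gives exactly ``plane curve of degree $p+3\geq 4$ with $L=\omega_C\otimes H$'', again with $p=\gon(C)-2$.

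I expect the main obstacle to be the boundary behaviour in part (2), and specifically the plane quartic. There the natural Clifford-index machinery breaks down: on a plane quartic the relevant bundle $A=\omega_C$ has $h^1(C,A)=1$ and so does not contribute to the Clifford index---indeed a plane quartic has Clifford dimension one, not two---so it escapes the clean Clifford-dimension-two classification and must be treated by an explicit genus-three computation. Carefully tracking which small values of $p$ and $g$ fall outside the range where $A$ is a genuine Clifford-index bundle, and verifying that each such degenerate case lands in (or is correctly excluded from) the stated exceptional list, is where the real care is needed.
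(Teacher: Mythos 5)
Your proposal is correct, and it takes a mildly but genuinely different route from the paper. The paper deduces parts (1) and (2) by invoking the general Theorem \ref{main:04} with $B=\omega_C$ (checking that $\deg\omega_C+\deg L\geq 4g+2p-2-\Cliff(C)$ once $\deg L\geq 2g+p+1$), and only uses Corollary \ref{cor:2g+p} for part (3). You instead run all three parts through the single equivalence of Corollary \ref{cor:2g+p}, rewrite it via Riemann--Roch as $h^0(C,A)\geq \deg A-p$ for $A=L\otimes\omega_C^{-1}$, and then settle each degree range by the Clifford-index inequality $\Cliff(A)\geq\Cliff(C)\geq p-1$ (Coppens--Martens), the Brill--Noether bound giving $g\geq 2p+1$, and the classification of curves of Clifford dimension two as smooth plane curves. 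This is essentially an inlined and specialized version of the Clifford-index analysis that the paper packages inside the proof of Theorem \ref{main:04} (exceptional case (3) there), so the underlying ingredients coincide; what your version buys is a self-contained, unified treatment of all three parts resting only on Corollary \ref{cor:2g+p}, at the cost of redoing by hand the boundary cases ($p\leq 1$, small genus, and in particular the plane quartic, where $A=\omega_C$ fails to contribute to the Clifford index) that you correctly identify and dispose of by direct Riemann--Roch counts. All the numerical steps check out, including the verification that the degenerate subcases with $h^1(C,A)\leq 1$ force $p\leq 1$ and land exactly on the listed exceptions.
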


\begin{proof}
As $\gon(C) \geq p+2$, we have $\Cliff(C) \geq p-1$.
When $\deg L \geq 2g+p+1$, we have
$$
\deg \omega_C + \deg L \geq 4g+p-1 \geq 4g+2p-2-\Cliff(C).
$$
By Theorem \ref{main:04}, $K_{p,1}(C, \omega_C; L) \neq 0$ if and only if $\deg L = 2g+p+1$, $\Cliff(C)=p-1$, and $C \subseteq \nP H^0(C, H)=\nP^2$ is a plane curve of degree $p+3 \geq 4$ with $L=\omega_C \otimes H$. We have shown $(1)$ and $(2)$. When $\deg L = 2g+p$, the assertion (3) follows from Corollary \ref{cor:2g+p}. 
\end{proof}

The following is a reformulation of the above theorem in the light of \cite[Conjecture 3.7]{Lazarsfeld:ProjNormCurve}. Our result is a refinement of Green--Lazarsfeld's original expectation that if $\deg L \geq 2g+p+1$, then $K_{p,1}(C, \omega_C; L) \neq 0$ if and only if $\omega_C$ is not $p$-very ample (see \cite[page 87]{Lazarsfeld:ProjNormCurve}).

\begin{corollary}
Let $C$ be a smooth projective curve of genus $g \geq 2$, and  $L$ be a line bundle on $C$ with $\deg L \geq 2g+p$. Then $K_{p,1}(C, \omega_C; L) \neq 0$ if and only if one of the following holds:
\begin{enumerate}
	\item [(1)] $\omega_C$ is not $p$-very ample.
	\item [(2)] $C \subseteq  \nP H^0(C, H) = \nP^2$ is a plane curve of degree $p+3 \geq 4$ and $L=\omega_C \otimes H$. In this case, $\omega_C$ is $p$-very ample and $\deg L = 2g+p+1$.
	\item [(3)] $C$ is arbitrary and $L = \omega_C(\xi)$ for some $\xi \in C_{p+2}$ with $\dim |\xi| = 1$. In this case, $\omega_C$ is $p$-very ample and $\deg L = 2g+p$.
\end{enumerate}
\end{corollary}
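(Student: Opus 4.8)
The plan is to split on whether $\omega_C$ is $p$-very ample, since the only content beyond Theorem \ref{main:01} is the case where this fails. Recall that $\omega_C$ is $p$-very ample if and only if $\gon(C)\geq p+2$, and that conditions (2) and (3) both force $\omega_C$ to be $p$-very ample: a smooth plane curve of degree $p+3\geq 4$ has $\gon(C)=p+2$, and given that $\omega_C$ is $p$-very ample, the existence of $\xi\in C_{p+2}$ with $\dim|\xi|=1$ forces $\gon(C)=p+2$. Thus (2) and (3) are disjoint from (1), and the biconditional can be checked branch by branch.

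When $\omega_C$ is $p$-very ample, condition (1) is false, and I would simply reorganize Theorem \ref{main:01} by the three degree ranges. If $\deg L\geq 2g+p+2$, then $K_{p,1}(C,\omega_C;L)=0$ and (2),(3) fail by degree, so both sides are false. If $\deg L=2g+p+1$, the nonvanishing is equivalent to (2) while (3) is excluded by degree; if $\deg L=2g+p$, it is equivalent to (3) while (2) is excluded. The only bookkeeping is the degree computations $\deg(\omega_C\otimes H)=2g+p+1$ for a plane curve of degree $p+3$ and $\deg\omega_C(\xi)=2g+p$ for $\xi\in C_{p+2}$, both immediate.

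The substantive step is the remaining case: if $\omega_C$ is not $p$-very ample, so that (1) holds while (2),(3) are vacuous, I must show $K_{p,1}(C,\omega_C;L)\neq 0$. Here the failure of $p$-very ampleness gives $\gon(C)\leq p+1$. I would pass to the dual side through $K_{p,1}(C,\omega_C;L)\cong K_{r(L)-p-1,1}(C;L)^{\vee}$ and produce a nonzero weight-one syzygy of $L$ itself by applying the Green--Lazarsfeld nonvanishing theorem (Theorem \ref{thm:GLnonvan}) to the factorization $L=A\otimes(L\otimes A^{-1})$, where $A$ is a gonality pencil, so $r(A)=1$. Since $\deg L\geq 2g+p\geq 2g$, the bundle $L$ is nonspecial with $r(L)=\deg L-g$, and since $\deg(L\otimes A^{-1})=\deg L-\gon(C)\geq 2g-1$, the bundle $L\otimes A^{-1}$ is nonspecial with $r(L\otimes A^{-1})=\deg L-\gon(C)-g\geq g-1\geq 1$. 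Theorem \ref{thm:GLnonvan} then yields $K_{q,1}(C;L)\neq 0$ for every $1\leq q\leq \deg L-\gon(C)-g$.

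It remains to check that the dual index $q:=r(L)-p-1=\deg L-g-p-1$ lies in this window, and this is the crux. The lower bound $q\geq 1$ reduces to $g\geq 2$, while the upper bound $q\leq\deg L-\gon(C)-g$ reduces precisely to $\gon(C)\leq p+1$, which is exactly the non-$p$-very ampleness hypothesis. Hence $K_{q,1}(C;L)\neq 0$, and dualizing gives $K_{p,1}(C,\omega_C;L)\neq 0$, completing this case. The main obstacle is therefore not any single hard estimate but the verification that $q$ falls in the Green--Lazarsfeld range; it is here, and only here, that the two hypotheses $\gon(C)\leq p+1$ and $g\geq 2$ are both genuinely used.
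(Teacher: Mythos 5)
Your proof is correct, and the first branch coincides with the paper's: when $\omega_C$ is $p$-very ample you simply unwind Theorem \ref{main:01} according to the three degree ranges, which is exactly what the paper does. The genuine difference is in the branch where $\omega_C$ is not $p$-very ample. There the paper cites Corollary \ref{cor:K_{p+1,1}}, whose proof runs through the symmetric-product lifting machinery of Section \ref{sec:nonvanishing} (Lemma \ref{lem:K_{l-1,1}=>K_{l,1}} and Corollary \ref{cor:K_{0,1}=>K_{l,1}}) and is stated for an arbitrary $B$ that is $p$-very ample but not $(p+1)$-very ample; as stated it produces nonvanishing at the index $p_0+1$, where $p_0$ is the largest integer for which $\omega_C$ is $p_0$-very ample, so reaching the index $p$ itself implicitly uses the propagation of weight-one nonvanishing. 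You instead dualize to $K_{r(L)-p-1,1}(C;L)$ and apply Theorem \ref{thm:GLnonvan} to $L=A\otimes(L\otimes A^{-1})$ with $A$ a gonality pencil ($r(A)=1$), and your verification that the dual index $q=\deg L-g-p-1$ lies in the window $[1,\deg L-\gon(C)-g]$ is exactly where $g\geq 2$ and $\gon(C)\leq p+1$ enter; the arithmetic checks out, and both $L$ and $L\otimes A^{-1}$ are indeed nonspecial under your degree bounds. This is essentially the mechanism the paper itself uses in Corollary \ref{cor:2g+p}(1), with the gonality pencil in place of a degree-$(p+2)$ divisor witnessing the failure of very ampleness, so your route is more elementary and self-contained for $B=\omega_C$ and lands on the correct index directly, while the paper's route buys the more general statement of Corollary \ref{cor:K_{p+1,1}} for arbitrary $B$.
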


\begin{proof}
If $\omega_C$ is $p$-very ample, then the corollary follows from Theorem \ref{main:01}. If $\omega_C$ is not $p$-very ample, then Corollary \ref{cor:K_{p+1,1}} shows that $K_{p, 1}(C, \omega_C; L) \neq 0$
\end{proof}

Now, we prove Theorem \ref{thm:effgon}.

\begin{proof}[Proof of Theorem \ref{thm:effgon}]
By the duality theorem (\cite[Theorem 2.c.1]{Green:KoszulI}), it is enough to show that if $\deg L \geq 2g+\gon(C)-2$, then 
$$
\text{$K_{p,1}(C, \omega_C; L) = 0$ for $0 \leq p \leq \gon(C)-2$ and $K_{\gon(C)-1,1}(C, \omega_C; L) \neq 0$}
$$ 
except for the cases (1) and (2) in which 
$$
\text{$K_{p,1}(C, \omega_C; L) = 0$ for $0 \leq p \leq \gon(C)-3$ and $K_{\gon(C)-2, 1}(C, \omega_C; L) \neq 0$}
$$
under the assumption that $K_{\gon(C)-1,1}(C, \omega_C; L) \neq K_{0,1}(C; L)^{\vee}$. 
If $K_{\gon(C)-1,1}(C, \omega_C; L) = K_{0,1}(C; L)^{\vee}=0$, then $g=2$ $(\gon(C)=2)$ and $\deg L = 4$ (cf. Corollary \ref{cor:2g+p} $(1)$). In this case, we only need to show that $K_{0,1}(C, \omega_C; L) \neq 0$ if and only if $L=\omega_C^2$. But this is a special case of Theorem \ref{main:01}. Now, assume that $K_{\gon(C)-1,1}(C, \omega_C; L) \neq K_{0,1}(C; L)^{\vee}$. For $0 \leq p \leq \gon(C)-3$, as $\deg L \geq 2g+p+1$ and $p \neq \gon(C)-2$, we get $K_{p,1}(C, \omega_C; L) = 0$ by Theorem \ref{main:01}. For $p=\gon(C)-2$, note that $\deg L \geq 2g+p$ and $\omega_C$ is $p$-very ample but not $(p+1)$-very ample. Then Corollary \ref{cor:2g+p} implies $K_{p+1, 1}(C, \omega_C; L) \neq 0$, and Theorem \ref{main:01} determines exactly when $K_{p,1}(C, \omega_C; L) =0$.
\end{proof}

We conclude this section by deriving a degree bound for $L$ proposed by Rathmann from Theorem \ref{thm:effgon}. In his journey of exploring an optimal bound for the gonality conjecture, Rathmann believes that the most natural degree bound would look like as $2g+g/2 + \epsilon$ together with some exceptional cases. It turns out that his intuition is indeed the case, and we confirm it in the following corollary, which also quickly implies Corollary \ref{main:02}.

\begin{corollary}\label{cor:2g+(g-2)/2}
	Let $C$ be a smooth projective curve of genus $g\geq 2$, and $L$ be a line bundle on $C$.
	If $\deg L \geq 2g+\lfloor (g-1)/2 \rfloor$, then 
	$$
	K_{p,1}(C; L) \neq 0 ~~\Longleftrightarrow~~ 1 \leq p \leq r(L)-\gon(C)
	$$
	except for the following two cases:
	\begin{enumerate}
		\item [(1)] $C \subseteq  \nP H^0(C, H) = \nP^2$ is a plane curve of degree $4,5,6$, and $L=\omega_C \otimes H$. $($In this case, $(g,\gon(C),\deg L) = (3,3,8), (6,4,15), (10, 5, 24)$, respectively.$)$ 
		\item [(2)] $C$ has maximal gonality, i.e., $\gon(C) = \lfloor (g+3)/2 \rfloor$, and $L=\omega_C(\xi)$ for an effective divisor $\xi$ of degree $\gon(C)$ with $\dim |\xi|=1$. 
	\end{enumerate}	
	In the exceptional cases, we have
	$K_{p,1}(C; L) \neq 0 ~~\Longleftrightarrow~~ 1 \leq p \leq r(L)-\gon(C)+1$.
\end{corollary}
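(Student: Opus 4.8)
The plan is to deduce the corollary directly from the effective gonality theorem (Theorem \ref{thm:effgon}) by comparing the two degree bounds. The starting point is the classical maximal gonality estimate $\gon(C) \leq \lfloor (g+3)/2 \rfloor$, valid for every smooth projective curve of genus $g \geq 2$. Since $\lfloor (g+3)/2 \rfloor = \lfloor (g-1)/2 \rfloor + 2$ for every integer $g$, this rewrites as $\gon(C) - 2 \leq \lfloor (g-1)/2 \rfloor$, with equality exactly when $C$ has maximal gonality. Hence $2g + \lfloor (g-1)/2 \rfloor \geq 2g + \gon(C) - 2$, and the hypothesis $\deg L \geq 2g + \lfloor (g-1)/2 \rfloor$ automatically implies $\deg L \geq 2g + \gon(C) - 2$. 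I would therefore invoke Theorem \ref{thm:effgon}, which already yields the asserted equivalence $K_{p,1}(C; L) \neq 0 \Longleftrightarrow 1 \leq p \leq r(L) - \gon(C)$ together with the extended range $1 \leq p \leq r(L) - \gon(C) + 1$ in its two exceptional cases; the only remaining task is to identify which of those exceptions are compatible with the stronger bound.

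For the second exception of Theorem \ref{thm:effgon}, namely $L = \omega_C(\xi)$ with $\deg \xi = \gon(C)$, $\dim|\xi| = 1$ and $\deg L = 2g + \gon(C) - 2$, the new hypothesis holds if and only if $\gon(C) - 2 \geq \lfloor (g-1)/2 \rfloor$; combined with the reverse inequality obtained above this forces equality, that is $\gon(C) = \lfloor (g+3)/2 \rfloor$, which is exactly case (2) of the corollary. For the first exception, where $C \subseteq \nP^2$ is a smooth plane curve of degree $d \geq 4$ and $L = \omega_C \otimes H$ with $\deg L = 2g + \gon(C) - 1$, I would substitute the standard plane-curve data $\gon(C) = d-1$ and $g = (d-1)(d-2)/2$. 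The new hypothesis then reads $\gon(C) - 1 \geq \lfloor (g-1)/2 \rfloor$, i.e. $d - 2 \geq \lfloor ((d-1)(d-2)/2 - 1)/2 \rfloor$. Because the right-hand side grows quadratically in $d$ while the left-hand side is linear, the inequality survives only for small $d$; a short check of the values $d = 4, 5, 6, 7$ shows it holds precisely for $d \in \{4,5,6\}$, producing the triples $(g, \gon(C), \deg L) = (3,3,8),\ (6,4,15),\ (10,5,24)$ recorded in case (1). For $d \geq 7$ the bundle $L = \omega_C \otimes H$ fails $\deg L \geq 2g + \lfloor (g-1)/2 \rfloor$ and so never enters the statement.

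The argument is thus essentially a bookkeeping reduction to Theorem \ref{thm:effgon}, and the one place demanding genuine care is the plane-curve computation: one must correctly pair the linear gonality $d-1$ with the quadratic genus $(d-1)(d-2)/2$ and verify through the floor function that exactly the degrees $4,5,6$ qualify, while confirming along the way that the claimed values of $\deg L$ indeed saturate or exceed the bound $2g + \lfloor (g-1)/2 \rfloor$ in each surviving case. In both retained exceptions the extended nonvanishing range is then inherited verbatim from Theorem \ref{thm:effgon}.
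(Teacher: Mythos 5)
Your proposal is correct and follows essentially the same route as the paper: reduce to Theorem \ref{thm:effgon} via the maximal gonality bound $\gon(C)\leq\lfloor (g+3)/2\rfloor$, then filter the two exceptional cases against the stronger degree hypothesis, with the plane-curve case coming down to the same inequality $d-2\geq\lfloor (d^2-3d)/4\rfloor$ that the paper checks to isolate $d=4,5,6$.
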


\begin{proof}
	As $\gon(C) \leq \lfloor (g+3)/2 \rfloor$, we have $\deg L \geq 2g+\lfloor (g-1)/2 \rfloor \geq 2g+\gon(C)-2$. By Theorem \ref{thm:effgon}, we only need to show the following:
	\begin{enumerate}
		\item [(1)] If $C \subseteq  \nP H^0(C, H) = \nP^2$ is a plane curve of degree $d \geq 4$ and $L=\omega_C \otimes H$, then $d=4,5,6$.
		\item [(2)] If $L=\omega_C(\xi)$ for an effective divisor $\xi$ of degree $\gon(C)$ with $\dim |\xi|=1$, then $\gon(C)=\lfloor (g+3)/2 \rfloor$. 
	\end{enumerate}
	Suppose that we are in Case (1). We have $g=(d-1)(d-2)/2$ and $\gon(C)=d-1$. Since $\deg L = 2g+\gon(C)-1 \geq 2g + \lfloor (g-1)/2 \rfloor$, it follows that $d-2 \geq \lfloor (d^2-3d)/4 \rfloor$. This implies that $d=4,5,6$. Suppose that we are in Case (2). As $\deg L = 2g+\gon(C)-2 \geq 2g + \lfloor (g-1)/2 \rfloor$, we get $\gon(C) \geq \lfloor (g+3)/2 \rfloor$ and hence  $\gon(C)=\lfloor (g+3)/2 \rfloor$. 
\end{proof}

\begin{proof}[Proof of Corollary \ref{main:02}]
Note that $g \geq \lfloor (g+3)/2 \rfloor$. Thus $\deg L \geq 3g-2 \geq 2g+\lfloor (g-1)/2 \rfloor$. Suppose that we are in the exceptional case (1) of Corollary \ref{cor:2g+(g-2)/2}. Then $C \subseteq \nP H^0(C, \omega_C)$ is a plane quartic curve ($g=3$ and $L=\omega_C$), and  $L=\omega_C^2$. Suppose that we are in the exceptional case (2) of Corollary \ref{cor:2g+(g-2)/2}. Then $\deg L = 2g+\lfloor (g-1)/2 \rfloor \geq 3g-2$, and hence, $\lfloor (g+3)/2 \rfloor = g$. This yields $g=3$ or $2$. If $g=3$ ($\gon(C)=3$), then $C \subseteq \nP H^0(C, \omega_C) = \nP^2$ is a plane quartic curve. In this case, for any $\xi \in C_3$ with $\dim |\xi|=1$, there is $x \in C$ such that $\sO_C(\xi) = \omega_C(-x)$. Thus $L = \omega_C^2(-x)$. If $g=2$ ($\gon(C)=2$), then $\omega_C = \omega_C(\xi)$ for any $\xi \in C_2$ with $\dim |\xi|=1$. Thus $L=\omega_C^2$. Now, the corollary follows from Corollary \ref{cor:2g+(g-2)/2}.
\end{proof}

\bibliographystyle{alpha}


\end{document}